\documentclass[11pt]{amsproc}

\setlength\textwidth{155mm}
\hoffset=-15mm
\setlength\textheight{230mm}
\voffset=-15mm

\usepackage{mathtext}
\usepackage[cp1251]{inputenc}

\usepackage{bm}

\usepackage[dvips]{graphicx}
\usepackage{amsmath}
\usepackage{amssymb}
\usepackage{amsxtra}

%\usepackage{epsfig}
%\usepackage{epic}
%\usepackage{eepic}
%\usepackage{graphics}
%\usepackage{graphicx}
%\usepackage{subfigure}

%\usepackage{caption}
%\captionsetup[figure]{labelfont=bf,labelsep=space}

\def\N{{{\Bbb N}}}
\def\Z{{{\Bbb Z}}}
\def\T{{{\Bbb T}}}
\def\R{{\Bbb R}}
\def\C{{\Bbb C}}
\def\l{{\lambda }}
\def\a{{\alpha }}
\def\D{{\Delta }}

\def\a{{\alpha}}
\def\b{{\beta}}
\def\d{{\delta}}
\def\e{{\varepsilon}}
\def\s{{\sigma}}
\def\vp{{\varphi}}
\def\t{{\theta }}
\def\g{{\gamma }}

\def\j{{\bm j}}
\def\k{{\bm k}}

\def\x{{\bm x}}

\def\ee{{\bm e}}

\def\elll{{\bm \ell}}

\newcommand{\h}{\widehat}
\newcommand{\w}{\widetilde}

\def\){\right)}
\def\({\left(}

\def\spec{\operatorname{spec}}
\def\mix{\operatorname{mix}}

\numberwithin{equation}{section}

\newtheorem{theorem}{Theorem}[section]

\newtheorem{corollary}[theorem]{Corollary}
\newtheorem{lemma}[theorem]{Lemma}

\newtheorem{proposition}[theorem]{Proposition}
\newtheorem{remark}[theorem]{Remark}

\newtheorem{example}[theorem]{Example}

\par

\sloppy

\begin{document}

\title[]{Sparse grid approximation in weighted Wiener spaces}

\author[Yurii
Kolomoitsev]{Yurii
Kolomoitsev$^{\text{a, 1, 2}}$}
\address{Institute for Numerical and Applied Mathematics, G\"ottingen University, Lotzestr. 16-18, 37083 G\"ottingen, Germany}
\email{kolomoitsev@math.uni-goettingen.de}

\author[Tetiana
Lomako]{Tetiana
Lomako$^{\text{a, 1}}$}
\address{Institute for Numerical and Applied Mathematics, G\"ottingen University, Lotzestr. 16-18, 37083 G\"ottingen, Germany}
\email{tlomako@yandex.ua}
\author[Sergey Tikhonov]{Segey Tikhonov$^\text{b, 3}$}
\address{Centre de Recerca Matem\`atica, Campus de Bellaterra, Edifici C 08193
Bellaterra, Barcelona, Spain; ICREA, Pg. Llu\'is Companys 23, 08010
Barcelona, Spain, and Universitat Aut\'onoma de Barcelona}
\email{stikhonov@crm.cat}

\thanks{$^\text{a}$Institute for Numerical and Applied Mathematics, G\"ottingen University, Lotzestr. 16-18, 37083 G\"ottingen, Germany}

\thanks{$^\text{b}$Centre de Recerca Matem\`atica, Campus de Bellaterra, Edifici C 08193
Bellaterra, Barcelona, Spain; ICREA, Pg. Llu\'is Companys 23, 08010
Barcelona, Spain, and Universitat Aut\'onoma de Barcelona}

%\thanks{$^\text{b}$Institute of Applied Mathematics and Mechanics of NAS of Ukraine, General Batyuk Str.~19, Slov’yans’k, Donetsk region, Ukraine, 84116}

\thanks{$^1$Supported by the German Research Foundation, project KO 5804/1-2}
\thanks{$^2$Support by the German Research Foundation in the framework of the RTG 2088}
\thanks{$^3$Supported by
PID2020-114948GB-I00,  2017 SGR 358, AP08856479
and  the CERCA Programme of the Generalitat de Catalunya.
 Also, supported by the Spanish State Research Agency, through the Severo Ochoa and Mar\'ia de Maeztu Program for Centers and Units of Excellence in R\&D (CEX2020-001084-M). The author thanks CERCA Programme/Generalitat de Catalunya for institutional support}

%\thanks{$^*$Corresponding author}

\thanks{E-mail address: stikhonov@crm.cat}

\date{\today}
\subjclass[2010]{41A25, 41A63, 42A10, 42A15, 41A58, 	41A17, 	42B25, 42B35} \keywords{Sparse grid, weighted Wiener spaces, quasi-interpolation operators,  Kantorovich operators, Smolyak algorithm, Littlewood--Paley-type characterizations}

\begin{abstract}
 We study approximation properties of multivariate periodic functions
from weighted Wiener spaces by sparse grids methods constructed with
the help of quasi-interpolation operators. The class of such operators
includes classical interpolation and sampling operators, Kantorovich-type
operators, scaling expansions associated with wavelet constructions,
and others. We obtain the rate of convergence of the corresponding
sparse grids methods in weighted Wiener norms as well as analogues of the Littlewood–Paley-type characterizations in terms of families of quasi-interpolation operators.
 \end{abstract}

\maketitle

\section{Introduction}

In many applied problems one needs to approximate high-dimensional functions in smooth function spaces. As it is known from previous research, traditional numerical methods such as the interpolation with tensor product grids suffer from the so-called "curse of dimensionality". In other words, the computation time of such methods grows dramatically with the number of variables and the problem becomes intractable already for mild dimensions. One of the means to overcome these obstacles is to employ different sparse grids approximation methods and to impose additional assumption on smoothness. Typically, one  assumes that a function belongs to a certain mixed smoothness Sobolev or Besov space (see, e.g.,~\cite{BG04}, \cite{DTU18}).

In this paper, we consider approximation methods that are based on generalized sparse grids (see, e.g.,~\cite{BG04}, \cite{GK00}).
Recall that for given parameters $T\in [-\infty,1)$, $n\in\N$, and a family of univariate operators
$Y=(Y_j)_{j\in \Z_+}$, a sparse grid approximation method is defined as follows:
\begin{equation}\label{P}
  P_{n,T}^Y=\sum_{\j\in\D(n,T)} \eta_\j^Y,\quad \eta_\j^Y=\prod_{i=1}^d (Y_{j_i}^i-Y_{j_i-1}^i),
\end{equation}
where
\begin{equation*}%\label{se-}
 \D(n,T)=\{\k\in \Z_+^d\,:\, |\k|_1-T|\k|_\infty\le (1-T)n\}
\end{equation*}
and $Y_j^i$ denotes the univariate operator $Y_j$ acting on functions in the variable $x_i$ and $Y_{-1}=0$.
The most well studied case of the family $Y$ is the classical Lagrange interpolation operators $I=(I_j)_{j\in\Z_+}$ given by
\begin{equation*}%\label{I}
  I_j(f)(x)=2^{-j}\sum_{k=-2^{j-1}}^{2^{j-1}-1} f(x_k^j)\mathcal{D}_j(x-x_k^j),
\end{equation*}
where $x_k^j=\frac{\pi k}{2^{j-1}}$ and $\mathcal{D}_j(x)=\sum_{\ell=-2^{j-1}}^{2^{j-1}-1} e^{{\rm i}\ell x}$ is the Dirichlet kernel.
The corresponding sparse grid for a given level $n$ is then %defined by
%The classical sparse grid (regular sparse grid or Smolyak grid) of level $\ell$ is defined as follows:
$$
\Gamma({n,T})=\bigcup_{\j\in\D(n,T)}\mathcal{I}_{j_1}\times\dots\times \mathcal{I}_{j_d},
$$
where $\mathcal{I}_j=\{x_k^j\,:\,k=-2^j,\dots,2^j-1\}$, i.e.,  $P_{n,T}^I f(y)=f(y)$ for all $y\in \Gamma(n,T)$ and $f\in C(\T^d)$.
Here, the case $T=-\infty$ corresponds to the interpolation on the full tensor grid; the case $T=0$ represents interpolation on the Smolyak grid, which is also called the regular sparse grid (see~\cite{Sm63}, see also~\cite[Ch.~5]{DTU18}); and the case $0<T<1$ resembles the so-called energy-norm based sparse grids (see~\cite{BG04}, \cite{GH14}).
%The  and refers to the case, where the approximation error by  $P_{n,T}^I f$ is measured in the "energy space" $H^\g(\T^d)$.

One of the important characteristics of a sparse grid is its cardinality. Note that (see, e.g.,~\cite{GK00})
\begin{equation}\label{card}
  {\rm card}\,\Gamma({n,T})\lesssim \sum_{\k\in \D(T,n)}2^{|\k|_1}\lesssim \left\{
                                                                             \begin{array}{ll}
                                                                               2^{n}, & \hbox{if $0<T<1$,} \\
                                                                               2^n n^{d-1}, & \hbox{if $T=0$,} \\
                                                                               2^{(\frac{1-T}{1-T/d})n}, & \hbox{if $T<0$,} \\
                                                                               2^{dn}, & \hbox{if $T=-\infty$}
                                                                             \end{array}
                                                                           \right.
\end{equation}
and the same upper bound holds for the number of frequencies of the polynomial $P_{n,T}^I f$. Thus, the most interesting cases for approximation with the algorithm $P_{n,T}^I$ is when $0\le T<1$ since in this case, for the number of elements needed to construct the corresponding algorithm, there is no exponential growth with the dimension $d$. Let us consider this case in more detail.

%\bigskip

%The classical example is the so-called Smolyak algorithm first considered in~\cite{Sm63}, see also~\cite[Ch.~4]{DTU18}.
%In this case,  $\D=\D^S(n)=\{\k\in \Z_+^d\,:\,|\k|_1\le n\}$, where $n\in \N$, and

Approximation properties of the operators $P_{n,T}^I$ have been mainly studied in the case $T=0$, which corresponds to the classical Smolyk grids (see, e.g.,~\cite{AT97}, \cite{BU17}, \cite{D85}, \cite{Ha92}, \cite{SU07}, \cite{Sp00}, \cite{T86}, \cite{U08}; see also the book~\cite[Ch.~4 and Ch.~5]{DTU18}).
%As  rule, one estimates the error of approximation in the metric of $L_q(\T^d)$ for functions belonging to the unit ball of dominating mixed smoothness Sobolev or Besov spaces.
As an example, we mention the following $L_q$-error estimates for the Smolyak algorithm $P_{n,0}^I$ in the case of
approximation of functions from the Sobolev space $\mathbf{W}_{p}^\a(\T^d)$ of dominating mixed smoothness $\a>0$ (see, e.g.,~\cite{SU07} and~\cite[Chapters~4 and~5]{DTU18}): if  $1< p,q<\infty$ and $\a>\max\{1/p,1/2\}$, then
%(i) if $s>0$, then
%\begin{equation}\label{S2}
%   \sup_{f\in U\mathbf{B}_{p,\t}^s }\|f-T_n^V(f)\|_{L_p(\T^d)} \asymp
%\left\{
%    \begin{array}{ll}
%     2^{-s n}n^{(d-1)(\frac1p-\frac1\t)}, & \hbox{$p\le 2$ and $p\le \t$,} \\
%     2^{-s n}n^{(d-1)(\frac12-\frac1\t)}, & \hbox{$2<p$ and $2<\t$,} \\
%     2^{-s n}, & \hbox{otherwise,}
%    \end{array}
%\right.
%\end{equation}
\begin{equation}\label{I2}
   \sup_{f\in U\mathbf{W}_{p}^\a}\|f-P_{n,0}^I(f)\|_{L_q(\T^d)} \asymp \left\{
                                                                                \begin{array}{ll}
                                                                                   2^{-\a n}n^{\frac{d-1}2}, & \hbox{if $p\ge q$,} \\
                                                                                   2^{-(\a-1/p+1/q)n}, & \hbox{if $q>p$,}
                                                                                \end{array}
                                                                              \right.
\end{equation}
where $U\mathbf{W}_{p}^\a$ denotes the unit ball in the space $\mathbf{W}_{p}^\a(\T^d)$.
Similar estimates (for $T=0$) in the weighted Wiener spaces (or Korobov spaces)
were obtained in~\cite{DS89}, \cite{Ha92}, \cite{Sp00}.

In the case $0<T<1$, the approximations by operators $P_{n,T}^I$ have been mostly investigated for functions from the so-called generalized mixed smoothness (or hybrid smoothness) Sobolev space
$$
H^{\a,\b}(\T^d):=\left\{f\in L_2(\T^d)\,:\, \sum_{\k\in \Z^d}\prod_{j=1}^d (1+|k_j|)^{2\a}(1+|\k|)^{2\b}|\h f(\k)|^2<\infty\right\},
$$
where the parameter $\b$ governs for the isotropic smoothness, whereas $\a$  reflects the smoothness in the dominating mixed sense.
Herewith, the approximation error is estimated in the metric of the classical isotropic Sobolev space $H^\g(\T^d)=H^{0,\g}(\T^d)$, see, e.g.,~\cite{BG04}, \cite{GH14}, \cite{GH20}, \cite{BDSU16}. In particular, we mention a general result obtained in the recent paper~\cite{GH20}:
let $\a\ge 0$, $\b\ge 0$, $\g-\b<\a$, $\a+\frac\b d>\frac12$. Then, for all $f\in H^{\a,\b}(\T^d)$ and $n\in \N$, one has
  \begin{equation}\label{gr}
       \|f-P_{n,T}^I f\|_{H^\g(\T^d)}\lesssim \Omega_I(n)\|f\|_{H^{\a,\b}(\T^d)},
  \end{equation}
where\footnote{Note that there are typos in formula~(21) and related estimates in~\cite{GH20}. See also~\cite[Lemma~8]{GH14}.}
\begin{equation*}
  \begin{split}
      \Omega_I(n)%&=\Omega_{\a,\b,\g, p,q,T,d,N}(n)\\
=\left\{
                                                                         \begin{array}{ll}
                                                                            \displaystyle 2^{-\(\a-(\g-\b)-\(\a T-(\g-\b)\)\frac{d-1}{d-T}\)n}n^{\frac{d-1}2}, & \hbox{$T\ge\frac{\g-\b}{\a}$,} \\
                                                                           \displaystyle  2^{-(\a-(\g-\b))n}, & \hbox{$T<\frac{\g-\b}{\a}$,}
                                                                          \end{array}
                                                                        \right.
   \end{split}
\end{equation*}
which again shows the importance of the case $0\le T\le \frac{\g-\b}{\a}<1$, cf.~\eqref{card}.

%For various applications including the solutions of the electronic Schr\"odinger equation and Galerkin methods for $H^1(\T^d)$-approximation of the solution of general elliptic variational problems, see, e.g.,~\cite{KYs12}, \cite{BG04}, \cite{GK00}, \cite{GK09}.

Along with the classical interpolation operators $I=(I_j)_{j\in\Z_+}$ one also considers the family of the partial sums of Fourier series (see, e.g.,~\cite{AT97}, \cite{D86}), families of convolution type operators (see, e.g.,~\cite{U08}, \cite{SU07}), quasi-interpolation operators based on scaled $B$-splines with integer knots (see, e.g.,~\cite{D11}, \cite{D16}, \cite{D18}).

In this paper, as a family $Y$ in~\eqref{P}, we make use of the general quasi-interpolation operators
defined by %the following formula:
\begin{equation}\label{Q0-}
  %Q_j(f)(x)=
Q_j(f,\vp_j,\w\vp_j)(x)=2^{-j}\sum_{k=-2^{j-1}}^{2^{j-1}-1} (f*\w\vp_j)(x_k^j)\vp_j(x-x_k^j),\quad j\in\Z_+,
\end{equation}
where $(\vp_j)_{j\in \Z_+}$ is a family of univariate trigonometric polynomials and $(\w\vp_j)_{j\in \Z_+}$ is a family of functions/distributions on $\T$.
%and the convolution $f*\w\vp_j$ is defined in some suitable way.
Note that in the non-periodic case approximation properties of operators of such type in various function spaces (classical and weighted $L_p$, Sobolev, Besov, Wiener) have been studied, for example, in the works~\cite{FK07}, \cite{JZ95}, \cite{Jia10}, \cite{KS21}, \cite{Ky96}. It worths noting that the operators~\eqref{Q0-} can be successfully employed in such applied problems, where the data contains some noise and the functional information is provided by other means than point evaluation (averages,  divided differences, etc.), see, e.g.,~\cite{CSV20}, \cite{ZWS12}.

In the periodic case, an analog of estimate~\eqref{I2} for any $\a>0$ has been recently established in~\cite{K21} for the approximation processes $P_{n,0}^Q$ with $Q=(Q_j)_{j\in\Z_+}$ defined by~\eqref{Q0-}. The corresponding proof is essentially based on the results
 from~\cite{KP21}, where under different compatibility conditions on $(\vp_j)_{j\in \Z_+}$ and $(\w\vp_j)_{j\in \Z_+}$, $L_p$-error of approximation by the operators $Q_j$ were obtained. Similar results in weighted Wiener spaces and $L_2(\T)$ have been derived in~\cite{KKS20} and~\cite{JBU02}, correspondingly.

The goal of the present work is to establish analogues of error estimate~\eqref{gr} for sparse grid approximation methods constructed using general quasi-interpolation operators. Comparing our findings with the previously known results, we stress two important differences.   Firstly, we build the approximation schemes using quasi-interpolation operators~\eqref{Q0-} rather than the classical interpolation operators $(I_j)_{j\in\Z_+}$ constructed using the Dirichlet kernel and the values of a function at sets of equidistant interpolation nodes as in~\cite{BDSU16}, \cite{GH14}, \cite{GH20}. Secondly, we work with a more general scale of spaces, namely with the weighted Wiener spaces $A_p^{\a,\b}(\T^d)$ rather than with the Sobolev spaces $H^{\a,\b}(\T^d)$, which correspond to the case $p=2$. We would like to stress that possibility to vary the family $(\w\vp_j)_{j\in\Z_+}$ allows us to prove the results under essentiality less restrictive conditions on the parameters $\a$ and $\b$.
%The class of quasi-inteplation operators, which we consider in preset paper, includes classical interpolation polynomials, Kantorovich-type operators, scaling expansions associated with wavelet constructions,
%and others. Second, we approximate a function by our approximation methods in different weighted Wiener spaces rather than in the Sobolev spaces with respect to $L_2$-norm as in~\cite{BDSU16}.
%In particular, u
%
%
%Our main tool is ....

The paper is organized as follows. In
Section~2 we introduce basic notations, define isotropic, mixed, and hybrid weighted Wiener spaces and general quasi-interpolation operators. Section~3 is devoted to auxiliary results. In particular, we prove the following useful estimate
$$
\|f-Q_j(f,\vp_j,\w\vp_j)\|_{A_q^\g(\T)}\lesssim 2^{-j\min(\a-\g,s)}\|f\|_{A_q^\a(\T)},
$$
see Lemma~\ref{leKKS}.
%In this section, we obtain general upper estimates of the $L_p$-error for $Q_j(f,\vp_j,\w\vp_j)$ and give auxiliary lemmas.
In Section~4 we establish our main tools, the so-called "discrete" Littlewood-Paley type characterizations.
In Section~5 we prove our main results: we consider approximation in the isotropic Wiener space $A_q^\g(\T^d)$ (in Subsection~5.1) and in the mixed Wiener spaces $A_{q,\mix}^\g(\T^d)$ (in Subsection~5.2). In Subsection~5.3 we discuss the sharpness of the obtained results. In Section~6 we consider the specific sets of parameters, where our main results (Theorems~\ref{th1} and~\ref{th1+}) provide the most effective error estimates with respect to the approximation rate and the number of degrees of freedom.

\section{Notation. Function spaces and operators}

\subsection{Basic notation}

%We use the standard multi-index notations.
    %Let $\N$ be the set of positive integers, $\R^d$ be the $d$-dimensional Euclidean space,
    %$\Z^d$ be the integer lattice  in $\R^d$,
 In what follows, $\Z_+^d=\{\x\in\Z^d:~x_i\geq~{0}, i=1,\dots, d\}$ and
    $\T^d=\R^d\slash2\pi\Z^d$ is the $d$-dimensional torus.
    Further, for vectors  $\x = (x_1,\dots, x_d)$ and
    $\k =(k_1,\dots, k_d)$ in $\R^d$, we denote
     $(\x, \k)=x_1k_1+\dots+x_dk_d$.
%and for a given  $i\in \{1,\dots,d\}$, we denote
    %$\x_i^d=(x_i, x_{i+1},\dots,x_d)\in \R^{d-i+1}$.
		If ${\bm j}\in\Z^d_+$,  we set
    $|\bm{j}|_1=\sum_{k=1}^d j_k$, $|\bm{j}|_\infty=\max_{k=1}^d j_k$, and $2^{\j}=(2^{j_1},\dots,2^{j_d})$. For $1\le p\le\infty$, $p'$ is given by
$\frac{1}{p}+\frac{1}{p'}=1.$ For $1\le p,q\le \infty$, we set $\s_{p,q}=\(\frac1q-\frac1p\)_+$.

If $f\in L_1(\T^d)$, then
$$
\h f(\k)=(2\pi)^{-d}\int_{\T^d} f(\x){e}^{-{\rm i}(\k,\x)}{\rm d}\x,\quad \k\in\Z^d,
$$
denotes the $k$-th Fourier coefficient of $f$.
As usual, the convolution of integrable functions $f$ and $g$ is given by
$$
(f*g)(\x)=(2\pi)^{-d}\int_{\T^d} f(\x-{\bm  t})g({\bm  t}){\rm d}{\bm  t}.
$$
By $\mathcal{T}_\j^d$, $\j\in \Z_+^d$, we denote the following set of trigonometric polynomials:
$$
\mathcal{T}_\j^d={\rm span}\left\{e^{{\rm i}(\k,\x)}\,:\,\k\in  D_{j_1}\times\dots\times D_{j_d}\right\},
$$
where
$$
D_j=[-2^{j-1},2^{j-1})\cap \Z.
$$

Let ${\mathcal{D}} = C^{\infty}(\T)$ be the space of infinitely differentiable functions on $\T$.
The linear space of periodic distributions (continuous linear functionals on ${\mathcal{D}}$) is denoted by ${\mathcal{D}}'$.
It is known (see, e.g.,~\cite[p. 144]{ST87}) that any periodic distribution ${\w\vp}$ can be expanded in a weakly convergent (in ${\mathcal{D}}'$) Fourier series
\begin{equation*}%\label{fPDisrt}
{\w\vp}(x) = \sum_{k\in\Z} \h{\w\vp}(k) {e}^{{\rm i} kx},
\end{equation*}
where the sequence $(\h{\w\vp}(k))_k$ has at most polynomial growth.
%Also, conversely, for any sequence $\{\h{\w\vp}(k)\}_k$ of at most polynomial growth the series on the right-hand side of~(\ref{fPDisrt}) converges weakly to a periodic distribution.
%The numbers $\h{\w\vp}(k)$ are called the Fourier coefficients of a periodic distribution ${\w\vp}$ and
%$\h{\w\vp}(k) = \langle {\rm e}^{-2\pi {\rm i} (k, \cdot)}, {\w\vp}\rangle $.

Throughout the paper, we use the notation
$
\, A \lesssim B,
$
with $A,B\ge 0$, for the estimate
$\, A \le C\, B,$ where $\, C$ is a positive constant independent of
the essential variables in $\, A$ and $\, B$ (usually, $f$, $j$, and $n$).
If $\, A \lesssim B$
and $\, B \lesssim A$ simultaneously, we write $\, A \asymp B$ and say that $\, A$
is equivalent to $\, B$.
For two function spaces
$\, X$ and $\, Y,$ we will use the notation
$
\, Y \hookrightarrow X
$
if
$\, Y \subset X$ and $\, \| f\|_X \lesssim \| f\|_Y$ for all $\, f \in
Y.$ The unit ball in some normed vector space $X$ is denoted by $UX$.

\subsection{Weighted Wiener-type spaces} We will employ the following function spaces with the parameters $\a, \b\in\R$, and $0<q\le \infty$.

\smallskip

\noindent $\bullet$ \emph{The periodic (isotropic) Wiener space} $A_{q}^\a(\T^d)$ is the collection of all $f\in L_1(\T^d)$ such that
$$
\|f\|_{A_{q}^\a(\T^d)}':=\(\sum_{\k\in \Z^d}(1+|\k|)^{q\a}|\h f(\k)|^q\)^{1/q}, \quad q<\infty,
$$
$$
\|f\|_{A_{\infty}^\a(\T^d)}':=\sup_{\k\in \Z^d}(1+|\k|)^{\a}|\h f(\k)|,\quad q=\infty.
$$
In the case $\a=0$, we use the following standard notation
$
A_q(\T^d)=A_q^0(\T^d).
$

\smallskip

\noindent $\bullet$ \emph{The periodic (mixed) Wiener space} $A_{q,\mix}^\a(\T^d)$ is the collection of all $f\in L_1(\T^d)$ such that
$$
\|f\|_{A_{q,\mix}^\a(\T^d)}':=\(\sum_{\k\in \Z^d}\prod_{j=1}^d (1+|k_j|)^{q\a}|\h f(\k)|^q\)^{1/q}, \quad q<\infty,
$$
$$
\|f\|_{A_{\infty,\mix}^\a(\T^d)}':=\sup_{\k\in \Z^d}\prod_{j=1}^d (1+|k_j|)^{\a}|\h f(\k)|,\quad q=\infty.
$$

\noindent $\bullet$ \emph{The periodic (hybrid) Wiener space} $A_q^{\a,\b}(\T^d)$ is the collection of all $f\in L_1(\T^d)$ such that
$$
\|f\|_{A_{q}^{\a,\b}(\T^d)}':=\(\sum_{\k\in \Z^d}\prod_{j=1}^d (1+|k_j|)^{q\a}(1+|\k|)^{q\b}|\h f(\k)|^q\)^{1/q}, \quad q<\infty,
$$
$$
\|f\|_{A_{\infty}^{\a,\b}(\T^d)}':=\sup_{\k\in \Z^d}\prod_{j=1}^d (1+|k_j|)^{\a} (1+|\k|)^{\b}|\h f(\k)|,\quad q=\infty.
$$

\begin{remark}\label{remb}
$(i)$  It is easy to see that for any $\a>0$ and $1\le q\le \infty$, the following embeddings hold:
$$
A^{d\a}_q(\T^d) \hookrightarrow A^{\a}_{q,\mix}(\T^d) \hookrightarrow A^{\a}_q(\T^d).
$$

$(ii)$ Note that the isotropic Wiener space $A_q^\a(\T^d)$ coincides with the corresponding periodic Sobolev space $H^\a(\T^d)$ in the case $q=2$. The same holds for the mixed or hybrid Wiener spaces.

$(iii)$ For more information about the weighted Wiener spaces $A_q^{\a,\b}(\T^d)$, see papers~\cite{B89}, \cite{Dya92},  \cite{LST12}, \cite{Mu58}, \cite{NS20}.
\end{remark}

As usual, for $f\in L_1(\T^d)$, we define the diadic blocks $\d_\k(f)$, $\k\in \Z_+^d$, by
$$
\d_\k(f)(x)=\sum_{\k\in \mathcal{P}_\k} \h f(\k)e^{{\rm i}(\k,\x)},
$$
where
\begin{equation*}%\label{Pk}
\mathcal{P}_\k:=P_{k_1}\times\dots\times P_{k_d},
\end{equation*}
$P_j=\{\ell \in \Z\,:\, 2^{j-1}\le |\ell|<2^j\}$ for $j>0$, and  $P_0=\{0\}$.

Recall that for all $f\in L_p(\T^d)$, $1<p<\infty$, the Littlewood-Paley decomposition reads as follows
$$
f=\sum_{\elll\in \Z_+^d}\d_\elll(f).
$$

The next lemma is a simple consequence of the definition of the space $A_q^{\a,\b}(\T^d)$.

\begin{lemma}\label{le1}
  Let $0< q\le\infty$, $\a\ge 0$, and let $\b\in \R$ be such that $\a+\b\ge 0$. Then
  $$
   A_q^{\a,\b}(\T^d)=\left\{ f\in L_1(\T^d)\,:\, \|f\|_{A_q^{\a,\b}(\T^d)}:=\bigg(\sum_{\k\in \Z_+^d} 2^{q(\a|\k|_1+\b|\k|_\infty)}\|\d_\k(f)\|_{A_q(\T^d)}^q\bigg)^{1/q}<\infty\right\}
  $$
  with the usual modification in the case $q=\infty$ in the sense of equivalent norms.
\end{lemma}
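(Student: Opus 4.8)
The plan is to reduce the claimed norm equivalence to an elementary pointwise comparison of the two Fourier weights and then sum. First I would rewrite the dyadic-block quantity as a single weighted sum over the frequencies. Since the blocks $\{\mathcal{P}_\elll\}_{\elll\in\Z_+^d}$ partition $\Z^d$ and, directly from the definition of the $A_q(\T^d)$-(quasi-)norm, $\|\d_\elll(f)\|_{A_q(\T^d)}^q=\sum_{\k\in\mathcal{P}_\elll}|\h f(\k)|^q$ for $q<\infty$ (with the obvious supremum version for $q=\infty$), the candidate norm becomes
$$
\|f\|_{A_q^{\a,\b}(\T^d)}^q=\sum_{\k\in\Z^d} 2^{q(\a|\elll(\k)|_1+\b|\elll(\k)|_\infty)}|\h f(\k)|^q,
$$
where for each $\k$ the multi-index $\elll(\k)\in\Z_+^d$ is the unique one with $\k\in\mathcal{P}_{\elll(\k)}$, i.e. its $i$-th entry is $0$ when $k_i=0$ and satisfies $2^{\ell_i-1}\le|k_i|<2^{\ell_i}$ otherwise.

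The heart of the matter is the pointwise weight equivalence
$$
2^{\a|\elll(\k)|_1+\b|\elll(\k)|_\infty}\asymp \prod_{i=1}^d(1+|k_i|)^{\a}\,(1+|\k|)^{\b},\qquad \k\in\Z^d,
$$
with constants independent of $\k$. For the mixed factor I would record that $1+|k_i|\asymp 2^{\ell_i(k_i)}$ holds for every coordinate (including $k_i=0$, where both sides equal $1$; for $k_i\neq 0$ one has $\tfrac12\,2^{\ell_i}\le 1+|k_i|\le 2^{\ell_i}$); raising this to the power $\a$ and multiplying over $i$ yields $\prod_i(1+|k_i|)^{\a}\asymp 2^{\a|\elll(\k)|_1}$. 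For the isotropic factor I would use $1+|\k|\asymp 1+|\k|_\infty=\max_i(1+|k_i|)\asymp\max_i 2^{\ell_i(k_i)}=2^{|\elll(\k)|_\infty}$, where the monotonicity of $k_i\mapsto\ell_i(k_i)$ guarantees that the largest block index is attained at the largest coordinate; raising to the power $\b$ then gives $(1+|\k|)^{\b}\asymp 2^{\b|\elll(\k)|_\infty}$. Multiplying the two comparisons produces the displayed equivalence.

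Finally, raising the weight equivalence to the power $q$ and summing over $\k$ (respectively taking the supremum when $q=\infty$, which matches the stated modification) shows $\|f\|_{A_q^{\a,\b}(\T^d)}\asymp\|f\|_{A_q^{\a,\b}(\T^d)}'$ for every $f\in L_1(\T^d)$. In particular, the two quantities are finite simultaneously, so the two descriptions of $A_q^{\a,\b}(\T^d)$ coincide as sets, which is exactly the assertion of the lemma.

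The only genuinely delicate point I anticipate is the isotropic factor: one must check carefully that $\max_i\ell_i(k_i)$ really encodes $|\k|_\infty$, that replacing $|\k|$ by $|\k|_\infty$ costs only a dimensional constant, and that the comparison survives a possibly negative exponent $\b$ — which it does, since the underlying two-sided bound $1+|\k|\asymp 2^{|\elll(\k)|_\infty}$ is multiplicative and uniform in $\k$, so raising to any real power only changes the implied constant. The hypotheses $\a\ge0$ and $\a+\b\ge0$ are not needed for the equivalence itself; their role is to keep the weight bounded below by a positive constant (so that $A_q^{\a,\b}(\T^d)\hookrightarrow A_q(\T^d)$), which is consistent with defining the space as a subset of $L_1(\T^d)$.
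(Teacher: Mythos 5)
Your proof is correct, and it is essentially the argument the paper has in mind: the paper offers no proof at all, stating only that the lemma is ``a simple consequence of the definition,'' and your regrouping of the frequency sum over the dyadic blocks $\mathcal{P}_{\elll}$ together with the uniform weight comparison $2^{\a|\elll(\k)|_1+\b|\elll(\k)|_\infty}\asymp \prod_{i=1}^d(1+|k_i|)^{\a}(1+|\k|)^{\b}$ is exactly that simple consequence, carried out carefully (including the $q=\infty$ case and the observation that negative $\b$ is harmless).
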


\subsection{Quasi-interpolation operators}
Consider a family of general univariate quasi-interpolation operators $Q=(Q_j)_{j\in \Z_+}$ given by %the following formula:
\begin{equation*}%\label{Q0}
  Q_j(f,\vp_j,\w\vp_j)(x)=2^{-j}\sum_{k\in D_j} (f*\w\vp_j)(x_k^j)\vp_j(x-x_k^j), \quad x_k^j=\frac{\pi k}{2^{j-1}},
\end{equation*}
where $(\vp_j)_{j\in \Z_+}$ is a family of univariate trigonometric polynomials in $\mathcal{T}_j^1$, $(\w\vp_j)_{j\in \Z_+}$ is a family of functions/distributions, and the convolution $f*\w\vp_j$ is defined in some suitable way for any $j\in \Z_+$.

Below, we assume that the following conditions on $(\vp_j)_{j\in \Z_+}$ and $(\w\vp_j)_{j\in \Z_+}$ hold:

\smallskip

    $\bullet$ \emph{The growth condition of order} $N\ge 0$ for the Fourier coefficients of $\w\vp_j$:
\begin{equation}\label{c1}
 \begin{split}
| \h{\w\vp_j}(\ell)| & \le C_{\w\vp} (1+|2^{-j}\ell|^{N}) \quad\text{for all}\quad  \ell\in\Z, \quad  j\in\Z_+.
%\\
%\max\limits_{\ell\in D_j} |  \h{\w\vp_j}(\ell)| & \le  C_{\w\vp}, \phantom{|2^{-j}\ell|^{N}} \quad \forall j\in\Z_+.
 \end{split}
\end{equation}

\medskip

$\bullet$ \emph{The uniform boundedness condition} for the Fourier coefficients of $\vp_j$:
\begin{equation}\label{c2}
|\h{\vp_j}(\ell)| \le C_{\vp} \quad\text{for all}\quad  \ell\in\Z, \quad  j\in\Z_+.
\end{equation}

\medskip

$\bullet$ \emph{The compatibility condition  of order} $s> 0$ for $\vp_j$ and $\w\vp_j$:
\begin{equation}\label{c3}
|1-\h{\vp_j}(\ell){\h{\w\vp_j}(\ell)}| \le C_{\vp,\w\vp,s}  |2^{-j} \ell|^s \quad\text{for all}\quad \ell\in D_j, \quad  j\in\Z_+.
\end{equation}

\medskip

%\textbf{Examples.}
Let us consider two important classes of quasi-interpolation operators and examine the above conditions.
% (the so-called quasi-interpolation sampling operators and the Kantorovich sampling operators), which satisfy the above conditions. In what follows, we are mainly  interested in particular distributions/functions $(\w\vp_j)_{j}$ and general trigonometric polynomials $(\vp_j)_j$ satisfying condition~\eqref{c2}.

\begin{example}\label{ex1}
\normalfont\emph{Quasi-interpolation sampling operators} are defined by
\begin{equation}\label{s}
  S_j(f,\vp_j)(x)=2^{-j}\sum_{k\in D_j} \(\sum_{|\nu|\le m}a_{\nu,j} f^{(r_\nu)}(x_{k-\nu}^j)\)\vp_j(x-x_k^j),
\end{equation}
where $a_{\nu,j}\in \C$, $r_\nu\in \Z_+$, and $\vp_j\in \mathcal{T}_j^1$. Note that $S_j(f,\vp_j)=Q_j(f,\vp_j,\w\vp_j)$ with
$$
\w\vp_j(x)=\sum_{|\nu|\le m}a_{\nu,j} \delta^{(r_\nu)}(x-x_{\nu}^j)\sim \sum_{\ell\in \Z}\(\sum_{|\nu|\le m}a_{\nu,j} ({\rm i}\ell)^{r_\nu}e^{-{\rm i}\ell x_\nu^j}\)e^{{\rm i}\ell x}.
$$
One can see that condition \eqref{c1} with $N=\max_{|\nu|\le m} r_\nu$ is satisfied if
$$
\sup_{j\in\Z_+}\sum_{|\nu|\le m}2^{r_\nu j}|a_{\nu,j}|<\infty%,\quad \sup_{j\in\Z_+}\max_{k\in D_j}|\h{\vp_j}(k)|<\infty,
$$
and condition \eqref{c3} with $s>0$ is fulfilled if
$$
\bigg|1-\h{\vp_j}(\ell)\sum_{|\nu|\le m}\overline{{a_{\nu,j}}} (-{\rm i}\ell)^{r_\nu}e^{{\rm i}\ell x_\nu^j}\bigg|\le c|2^{-j}\ell|^s\quad\text{for all}\quad \ell\in D_j,\quad j\in \Z_+.
$$

\noindent 1a) A particular example of~\eqref{s} is the classical Lagrange interpolation operator
\begin{equation}\label{I+}
  I_j(f)(x)=2^{-j}\sum_{k\in D_j}f(x_k^j)\mathcal{D}_j(x-x_k^j),
\end{equation}
where
$$
\mathcal{D}_j(x)=\sum_{\ell\in D_j}e^{{\rm i}\ell x}
$$
is the Dirichlet kernel. Note that $I_j(f)=S_j(f,\vp_j)$ with $\vp_j=\mathcal{D}_j$, $m=0$, $a_{0,j}=1$, and $r_0=0$. In this case it is easy to see that conditions~\eqref{c1}--\eqref{c3} are fulfilled for $N=0$ and any $s>0$.

\noindent 1b) As an example of quasi-interpolation operators that are generated by an average sampling instead of the exact samples of $f$, we consider
\begin{equation}\label{A}
  A_j(f)(x)=2^{-j}\sum_{k\in D_j} \lambda_j(f)(x_k^j)\mathcal{D}_j(x-x_k^j),
\end{equation}
where
%$$
%\mathcal{D}_j(x)=\sum_{\ell\in D_j}e^{{\rm i}\ell x}
%$$
%is the Dirichlet kernel and
$$
\lambda_j(f)(x)=\frac14\(f\(x-\frac{\pi}{2^j}\)+2f(x)+f\(x+\frac{\pi}{2^j}\)\).
$$
We have that $A_j(f)=Q_j(f,\vp_j,\w\vp_j)$ with $\vp_j(x)=\mathcal{D}_j(x)$ and $\w\vp_j(x)\sim\sum_{\ell\in\Z}\cos^2\(\frac{\pi\ell}{2^{j+1}}\)e^{{\rm i}\ell x}$ and conditions \eqref{c1}--\eqref{c3} are fulfilled with $N=0$ and $s=2$. Note that the operators of such type are used in applications, for example, in order to reduce noise contribution (see, e.g.,~\cite{ZWS12}).

\noindent 1c) At the same time if in~\eqref{A}, we replace the Dirichlet kernel $\mathcal{D}_j$ by %the following kernel
$$
\mathcal{D}_j^*(x)=\sum_{\ell\in D_j}\frac{1}{\cos^2\(\frac{\pi\ell}{2^{j+1}}\)}e^{{\rm i}\ell x},
$$
then condition~\eqref{c3} will hold for arbitrary $s>0$.

\noindent 1d) We also consider the following type of operators:
\begin{equation}\label{Bj}
  B_j(f,\vp_j)(x)=2^{-j}\sum_{k\in D_j} \(f(x_k^j)+a2^{-j}f'(x_k^j)+b2^{-2j}f''(x_k^j)\)\mathcal{D}_j(x-x_k^j).
\end{equation}
We have that $B_j(f,\vp_j)=Q_j(f,\vp_j,\w\vp_j)$ if $\vp_j=\mathcal{D}_j$ and
$$
\w\vp_j(x)=\d(x)+a2^{-j}\d'(x)+b2^{-2j}\d''(x).
$$
It is not difficult to see that if $b\neq 0$, then condition~\eqref{c1} holds with $N=2$. At the same time if $a\neq 0$, then compatibility condition~\eqref{c3} holds with $s=1$. Note that in the non-periodic case operators of type~\eqref{Bj} have been studied, e.g., in~\cite{BHR93}, \cite{KKS18}.
\end{example}

%\smallskip

\begin{example}\label{ex2} \normalfont \emph{Kantorovich-type operators} are defined by
 \begin{equation}\label{k}
  K_j(f,\vp_j)(x)=\sum_{k\in D_j} \frac{2^{\s-1}}{\pi}\int_{-\pi2^{-j-\s}}^{\pi2^{-j-\s}}f(t+x_k^j){\rm d}t\,\vp_j(x-x_k^j),
\end{equation}
where $\s\ge 1$ and as above $\vp_j\in \mathcal{T}_j$. It is clear that by taking $\w\vp_j(x)=2^{j+\s} \chi_{[-\pi2^{-j-\s}, \pi2^{-j-\s}]}(x)$, i.e., the normalized characteristic function of $[-\pi 2^{-j-\s},\pi 2^{-j-\s}]$, we have that $K_j(f,\vp_j)=Q_j(f,\vp_j,\w\vp_j)$. Next,  since
$$
\h{\w\vp_j}(\ell)=\frac{\sin \pi2^{-j-\s}\ell}{\pi2^{-j-\s}\ell},\quad \ell\in\Z,
$$
it is not difficult to see that~\eqref{c1} holds for $N=0$. Concerning condition~\eqref{c3}, we have that in this case it has the following form:
\begin{equation}\label{c3+}
  \bigg|1-\h{\vp_j}(k)\frac{\sin \pi2^{-j-\s}k}{\pi2^{-j-\s}k}\bigg|\lesssim |2^{-j}k|^s,\quad \forall k\in D_j,\quad \forall j\in\Z_+.
\end{equation}

%???? $\sup_{j\in\Z_+}\|\w\vp_j\|_{A_{p'}(\T)}<\infty$ for all $1\le p<\infty$ ???

\noindent 2a) If $\vp_j=\mathcal{D}_j$, we have that~\eqref{c3+} holds for $s=2$.

\noindent 2b) At the same time, for
$$
\vp_j(x)=\mathcal{D}_j^*(x)=\sum_{\ell\in D_j}\frac{\pi2^{-j-\s}\ell}{\sin \pi2^{-j-\s}\ell}e^{{\rm i}\ell x},
$$
condition~\eqref{c3+} holds for any $s>0$.

\smallskip

Note that in recent years, the Kantorovich type operators~\eqref{k} have been intensively studied in many works, see, e.g., \cite{BBSV, CV19, CSV20, KKS18, KS17, KS21} in the non-periodic case and~\cite{JBU02, KKS20, KP21} in the periodic case.
It is worth noting that operators of this type have several advantages over the interpolation and sampling operators.
Particularly, using the averages of a function instead of the sampled values $f(x_k^j)$ allows to deal with discontinues signals and to reduce the so-called time-jitter errors, which is an important issue  in digital image processing.

\end{example}

\section{Auxiliary results}

%In paper~\cite{KKS20} the following assertion was proved.

For $j\in \Z_+$ and $\psi\in L_1(\T)$, we define the following amalgam-type norm:
\begin{equation*}
  \|\psi\|_{\w A_{p,j}(\T)}=\sup_{\ell\in D_j}\(\sum_{\mu\in \Z}|\h{\psi}(\ell+2^j\mu)|^{p}\)^{1/p}\quad\text{if}\quad 1\le p<\infty
\end{equation*}
and
\begin{equation*}
  \|\psi\|_{\w A_{\infty,j}(\T)}=\|\psi\|_{A_\infty(\T)}\quad\text{if}\quad p=\infty.
\end{equation*}

\begin{lemma}\label{leKKS}
  Let $1\le q\le \infty$, $0\le \g<\a$, and let $(\vp_j)_{j\in\Z_+}$ and $(\w\vp_j)_{j\in\Z_+}$ be such that $\w\vp_j\in \mathcal{D}'(\T)$ and  $\vp_j\in \mathcal{T}_j^1$ for each $j\in \Z_+$. Suppose conditions \eqref{c1}, \eqref{c2}, and \eqref{c3} are fulfilled with $N\ge 0$ and $s>0$. Further suppose that

\begin{itemize}
  \item[(i)] $\a>N+1/q'$ if $q\neq 1$ and $\a\ge N$ if $q=1$

  or

  \item[(ii)] $N=0$ and $\sup_{j\in\Z_+}\|\w\vp_j\|_{\w A_{q',j}(\T)}<\infty$.
\end{itemize}
Then, for all $f\in A_q^\a(\T)$ and $j\in\Z_+$, we have
  \begin{equation}\label{leKKS1}
    \|f-Q_j(f,\vp_j,\w\vp_j)\|_{A_q^\g(\T)}\lesssim 2^{-j\min(\a-\g,s)}\|f\|_{A_q^\a(\T)}.
  \end{equation}
\end{lemma}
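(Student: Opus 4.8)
The plan is to pass to the Fourier side and estimate the coefficients of $f-Q_j(f,\vp_j,\w\vp_j)$ directly in the equivalent Fourier-coefficient norm $\|g\|_{A_q^\g(\T)}'=\|(1+|\cdot|)^\g\h g\|_{\ell_q(\Z)}$, which is comparable to $\|g\|_{A_q^\g(\T)}$ by Lemma~\ref{le1}. First I would compute the Fourier coefficients of $Q_j(f,\vp_j,\w\vp_j)$. Expanding $\vp_j(x-x_k^j)$ and $(f*\w\vp_j)(x_k^j)$ into Fourier series and using the discrete orthogonality relation $2^{-j}\sum_{k\in D_j}e^{2\pi{\rm i}(n-m)k/2^j}=1$ when $2^j\mid(n-m)$ and $0$ otherwise, one obtains the aliasing formula
\[
\h{Q_j(f,\vp_j,\w\vp_j)}(m)=\h{\vp_j}(m)\sum_{\mu\in\Z}\h f(m+2^j\mu)\,\h{\w\vp_j}(m+2^j\mu),\qquad m\in D_j,
\]
and $\h{Q_j(f,\vp_j,\w\vp_j)}(m)=0$ for $m\notin D_j$. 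Accordingly I would split the error coefficientwise into three groups: the high frequencies $m\notin D_j$, where the coefficient is simply $\h f(m)$; the diagonal (main) term $(1-\h{\vp_j}(m)\h{\w\vp_j}(m))\h f(m)$ for $m\in D_j$; and the aliasing term $\h{\vp_j}(m)\sum_{\mu\neq0}\h f(m+2^j\mu)\h{\w\vp_j}(m+2^j\mu)$ for $m\in D_j$. Throughout it is convenient to write $g_\a(\ell)=(1+|\ell|)^\a|\h f(\ell)|$, so that $\|f\|_{A_q^\a(\T)}\asymp\|g_\a\|_{\ell_q(\Z)}$.

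The two easy groups come next. For the high frequencies, since $|m|\ge 2^{j-1}$ and $\g<\a$ one has $(1+|m|)^{\g}\le C\,2^{-j(\a-\g)}(1+|m|)^{\a}$, so this part contributes at most $2^{-j(\a-\g)}\|f\|_{A_q^\a(\T)}$. For the diagonal term I invoke the compatibility condition~\eqref{c3}, giving $|1-\h{\vp_j}(m)\h{\w\vp_j}(m)|\le C|2^{-j}m|^s$ for $m\in D_j$; writing $|\h f(m)|=(1+|m|)^{-\a}g_\a(m)$ and using $|m|<2^{j-1}$, one checks the elementary pointwise bound $2^{-js}(1+|m|)^{\g-\a}|m|^{s}\lesssim 2^{-j\min(\a-\g,s)}$ by separating the cases $s\le\a-\g$ and $s>\a-\g$. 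Hence the diagonal part is dominated by $2^{-j\min(\a-\g,s)}\|f\|_{A_q^\a(\T)}$, and this is precisely the term that produces the factor $2^{-j\min(\a-\g,s)}$ in~\eqref{leKKS1}.

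The aliasing term is the main obstacle, and it is here that~\eqref{c1},~\eqref{c2}, and the arithmetic restrictions on $\a$ are decisive. Using~\eqref{c2} to discard $\h{\vp_j}(m)$, and~\eqref{c1} together with the relations $|2^{-j}(m+2^j\mu)|\asymp|\mu|$ and $1+|m+2^j\mu|\asymp 2^j|\mu|$ valid for $m\in D_j$, $\mu\neq0$, I would bound the aliasing coefficient by $C\,2^{-j\a}\sum_{\mu\neq0}|\mu|^{N-\a}g_\a(m+2^j\mu)$. Under hypothesis (i) with $q\neq1$ I apply H\"older's inequality in $\mu$ with exponents $q'$ and $q$: the factor $(\sum_{\mu\neq0}|\mu|^{(N-\a)q'})^{1/q'}$ is finite exactly because $\a>N+1/q'$, while $\sum_{m\in D_j}\sum_{\mu\neq0}g_\a(m+2^j\mu)^q=\sum_{\ell\notin D_j}g_\a(\ell)^q\le\|g_\a\|_{\ell_q}^q$, since every $\ell\in\Z$ is uniquely $\ell=m+2^j\mu$ with $m\in D_j$, $\mu\in\Z$, and $\mu\neq0$ corresponds to $\ell\notin D_j$. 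Multiplying by the prefactor $(1+|m|)^{\g}\le C2^{j\g}$ and taking the $\ell_q$ norm yields the bound $2^{-j(\a-\g)}\|f\|_{A_q^\a(\T)}$; the endpoint $q=1$ is handled directly by summing first in $\mu$, where $\a\ge N$ is exactly what makes $\sum_{\ell\notin D_j}|\ell|^{N-\a}g_\a(\ell)\lesssim 2^{j(N-\a)}\|g_\a\|_{\ell_1}$ work, and $q=\infty$ is analogous. Under hypothesis (ii) ($N=0$) I instead retain $\h{\w\vp_j}(m+2^j\mu)$ and apply H\"older so that the inner $\mu$-sum of $|\h{\w\vp_j}(m+2^j\mu)|^{q'}$ is controlled by $\|\w\vp_j\|_{\w A_{q',j}(\T)}^{q'}$, which is uniformly bounded by assumption. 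Collecting the three groups gives~\eqref{leKKS1}: the high-frequency and aliasing parts contribute only the smaller factor $2^{-j(\a-\g)}$, while the dominant factor $2^{-j\min(\a-\g,s)}$ comes from the diagonal term.
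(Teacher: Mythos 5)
Your proposal is correct, and its core is the same aliasing identity the paper uses: your three coefficient groups correspond exactly to the paper's decomposition $f-Q_jf=(f-t_j)+(t_j-Q_jt_j)+Q_j(t_j-f)$, where $t_j$ is the Fourier partial sum over $D_j$ (high frequencies $\leftrightarrow f-t_j$, diagonal $\leftrightarrow t_j-Q_jt_j$, aliasing $\leftrightarrow Q_j(f-t_j)$), and your diagonal estimate is the paper's computation verbatim. The organizational difference is that the paper packages the aliasing piece through a separate uniform boundedness estimate $\|Q_jg\|_{A_q^\g(\T)}\lesssim\|g\|_{A_q^\g(\T)}$ (proved under hypothesis (ii) via the amalgam norm) and then applies it to $g=f-t_j$, whereas you estimate that piece coefficientwise directly; these are the same computation in different clothing. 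The genuine added value of your write-up is case (i): the paper does not prove it at all, but cites Remark~7 of~\cite{KKS20}, while you give a self-contained argument using the growth condition~\eqref{c1}, the relations $1+|m+2^j\mu|\asymp 2^j|\mu|$ for $m\in D_j$, $\mu\neq0$, and H\"older in $\mu$, correctly identifying $\a>N+1/q'$ (respectively $\a\ge N$ when $q=1$) as exactly the condition making $\sum_{\mu\neq0}|\mu|^{(N-\a)q'}$ converge (respectively making the $\mu$-sum work at the endpoint). So your proof buys self-containedness and a unified treatment of (i) and (ii), at the cost of redoing by hand what the paper's boundedness lemma and triangle inequality handle more modularly; both yield the stated rate, with the factor $2^{-j\min(\a-\g,s)}$ coming from the diagonal term and only $2^{-j(\a-\g)}$ from the other two pieces.
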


%In the next result, we improve Lemma~\ref{leKKS} for the operators $Q_j(f,\vp_j,\w\vp_j)$ of the Kantorovich-type.
%
%\begin{lemma}\label{leK}
%  Let $1\le q\le \infty$, $\g\ae 0$, and let $\vp=(\vp_j)_{j\in\Z_+}$ and $\w\vp=(\w\vp_j)_{j\in\Z_+}$ be such that $\w\vp_j\in A_{q'}(\T)$ with $\sup_{j\in\Z_+}\|\w\vp_j\|_{A_{q'}(\T)}<\infty$ and  $\vp_j\in \mathcal{T}_j^1$ for each $j\in \Z_+$. Suppose conditions \eqref{c1}, \eqref{c2}, and \eqref{c3} are fulfilled with $N= 0$ and $s>0$.  If $\a>\g$, then for all $f\in A_q^\a(\T)$ and $j\in\Z_+$, we have \eqref{leKKS1}.
%%  let $\vp$ and $\w\vp$ be a sequences of periodic functions and periodic distributions, respectively, whose Fourier coefficients are such that conditions \eqref{c1}, \eqref{c2}, \eqref{c3} are valid for the parameters $N=0$, $s>0$ and $\vp_j\in \mathcal{T}_j^1$, $j\in \Z_+$, and  $\sup_{j\in\Z_+}\|\w\vp_j\|_{A_{q'}(\T)}<\infty$. Suppose that $f\in A_q^\a(\T)$, where $\a>\g$. Then
%%  \begin{equation}\label{leK1}
%%    \|f-Q_j(f,\vp_j,\w\vp_j)\|_{A_q^\g(\T)}\lesssim 2^{-j\min(\a-\g,s)}\|f\|_{A_q^\a(\T)}.
%%  \end{equation}
%\end{lemma}
\begin{proof}
The proof of the lemma under conditions in $(i)$ can be found in~\cite[Remark~7]{KKS20}. In what follows, we prove~\eqref{leKKS1} assuming that condition~$(ii)$ holds.
We consider only the case $1\le q<\infty$. The case $q=\infty$ can be treated similarly.
First we show that
\begin{equation}\label{kb}
  \|Q_j(f,\vp_j,\w\vp_j)\|_{A_q^\g(\T)}\lesssim \|f\|_{A_q^\g(\T)}.
\end{equation}
Indeed, using the representation
\begin{equation*}%\label{kb1}
\begin{split}
  Q_j(f,\vp_j,\w\vp_j)(x)%&=2^{-j}\sum_{k\in D_j} (f*\w\vp_j)(x_k^j)\vp_j(x-x_k^j)\\
  &=\sum_{\ell \in D_j}\h{\vp_j}(\ell)\(2^{-j}\sum_{k\in D_j} (f*\w\vp_j)(x_k^j)e^{-{\rm i}\ell x_k^j}\)e^{{\rm i}\ell x}\\
  &=\sum_{\ell \in D_j}\h{\vp_j}(\ell)\(\sum_{\nu\in \Z} \h{f}(\nu) {\h{\w\vp_j}}(\nu) 2^{-j}\sum_{k\in D_j}
  e^{2\pi {\rm i}\frac{\nu-\ell}{2^j}}\)e^{{\rm i}\ell x}\\
  &=\sum_{\ell \in D_j}\h{\vp_j}(\ell)\(\sum_{\mu\in \Z} \h{f}(\ell+2^j\mu) {\h{\w\vp_j}}(\ell+2^j\mu)\)e^{{\rm i}\ell x},
\end{split}
\end{equation*}
condition~\eqref{c2}, and H\"older's inequality, we derive
\begin{equation*}%\label{kb1}
\begin{split}
 \|Q_j(f,\vp_j,\w\vp_j)\|_{A_q^\g(\T)}^q&\lesssim\sum_{\ell\in D_j}(1+|\ell|)^{q\g}\bigg|\sum_{\mu\in \Z} \h{f}(\ell+2^j\mu) {\h{\w\vp_j}}(\ell+2^j\mu)\bigg|^q\\
 &\lesssim \sum_{\ell\in D_j}(1+|\ell|)^{q\g} \sum_{\mu\in \Z} |\h{f}(\ell+2^j\mu)|^q \(\sum_{\mu\in \Z}|{\h{\w\vp_j}}(\ell+2^j\mu)|^{q'}\)^{q/q'}\\
 &\lesssim  \sup_{\ell\in D_j}\(\sum_{\mu\in \Z}|{\h{\w\vp_j}}(\ell+2^j\mu)|^{q'}\)^{q/q'} \|f\|_{A_q^\g(\T)}^q\lesssim \|f\|_{A_q^\g(\T)}^q,
\end{split}
\end{equation*}
which gives~\eqref{kb}.

Now, we prove inequality~\eqref{leKKS1}. Let
$$
t_j(x)=\sum_{k\in D_j} \h{f}(k)e^{{\rm i}kx}.
$$
Applying~\eqref{kb}, we obtain
\begin{equation}\label{leK2}
  \begin{split}
     &\|f-Q_j(f,\vp_j,\w\vp_j)\|_{A_q^\g(\T)}\\
     &\le \|f-t_j\|_{A_q^\g(\T)}+\|t_j-Q_j(t_j,\vp,\w\vp)\|_{A_q^\g(\T)}+\|Q_j(f-t_j,\vp,\w\vp)\|_{A_q^\g(\T)}\\
     &\lesssim \|f-t_j\|_{A_q^\g(\T)}+\|t_j-Q_j(t_j,\vp,\w\vp)\|_{A_q^\g(\T)}\\
     &\lesssim 2^{-j(\a-\g)}\|f\|_{A_q^\a(\T)}+\|t_j-Q_j(t_j,\vp,\w\vp)\|_{A_q^\g(\T)}.
  \end{split}
\end{equation}
Next, using inequality~\eqref{c3}, we get
\begin{equation}\label{leK3}
  \begin{split}
    \|t_j-Q_j(t_j,\vp,\w\vp)\|_{A_q^\g(\T)}&=\bigg\|\sum_{k\in D_j}(1-\h{\vp_j}(k){\h{\w\vp_j}}(k))\h{f}(k)e^{ikx}\bigg\|_{A_q^\g(\T)}\\
    &\lesssim \bigg(\sum_{k\in D_j} (1+|k|)^{\g q}|2^{-j} k|^{sq}|\h{f}(k)|^q\bigg)^{1/q}\\
    &\lesssim 2^{-js}\bigg(\sum_{k\in D_j} (1+|k|)^{(s-(\a-\g))q}(1+|k|)^{\a q}|\h{f}(k)|^q\bigg)^{1/q}\\
    &\lesssim 2^{-\min(\a-\g,s)j}\|f\|_{A_q^\a(\T)}.
  \end{split}
\end{equation}
Finally, combining~\eqref{leK3} and~\eqref{leK2}, we arrive at~\eqref{leKKS1}.
\end{proof}

\begin{remark}\label{K}
  With regard to $(i)$ and $(ii)$ in Lemma~\ref{leKKS}, we note that the condition $(i)$ can be applied to the operators
$S_j(f,\vp_j)$ in Example~\ref{ex1}. Condition~$(ii)$, unlike condition $(i)$, allows any parameter $\a>\g$ and is especially beneficial for the Kantorovich type operators $K_j(f,\vp_j)$, see  Example~\ref{ex2}. Indeed, for $\w\vp_j(x)=2^{j+\s} \chi_{[-\pi2^{-j-\s}, \pi2^{-j-\s}]}(x)\sim\sum_{\ell\in \Z}\frac{\sin \pi2^{-j-\s}\ell}{\pi2^{-j-\s}\ell}e^{{\rm i}\ell x}$, we have
\begin{equation*}
  \sup_{j\in\Z_+}\|\w\vp_j\|_{\w A_{q',j}(\T)}=\sup_{j\in\Z_+,\,\ell\in D_j}\(\sum_{\mu\in \Z}\bigg|\frac{\sin\pi 2^{-\s}(2^{-j}\ell+\mu)}{\pi 2^{-\s}(2^{-j}\ell+\mu)}\bigg|^{q'}\)^{1/q'}<\infty
\end{equation*}
in the case $1<q'<\infty$. The case $q'=\infty$ is clear.
\end{remark}

%\bigskip
%
%Recall
%that for a given family of univariate operators $Y=(Y_j)_{j\in \Z_+}$, the Smolyak algorithm is defined as follows:
%\begin{equation}\label{sm1}
%  S_n^Y:=\sum_{\j\in \Z_+^d:\,|\j|_1\le n} \eta_\j^Y,\quad \eta_\j^Y:=\prod_{i=1}^d (Y_{j_i}^i-Y_{j_i-1}^i),
%\end{equation}
%where $Y_j^i$ denotes the univariate operator $Y_j$ acting on functions in the variable $x_i$ and $Y_{-1}=0$.
%
%Following~??? \cite{BDSU16}, we generalize this operators in the following way. For a given finite set $\Delta\subset \Z_+^d$, we define the general sampling operator by
%\begin{equation}\label{gs}
%  P_\Delta^Y:=\sum_{\j\in \Delta} \eta_\j^Y.
%\end{equation}

%$$
%P_{n,T}^Q=\sum_{\j\in \D(n,T)}\eta_\j^Q, \quad n\in \N,\quad T<1,
%$$
%where
%\begin{equation}\label{se}
% \D(n,T)=\{\k\in \Z_+^d\,:\, |\k|_1-T|\k|_\infty\le (1-T)n\}.
%\end{equation}

%We will deal with the following type of sets $\D$:
%
%%\begin{equation}\label{set1}
%%  \D(\a,\b,\g,\xi)=\{\k\in \Z_+^d\,:\, \a|\k|_1-(\g-\b)|\k|_\infty\le \xi\},\quad \xi>0,
%%\end{equation}
%\begin{equation}\label{se-}
% \D(n,T)=\{\k\in \Z_+^d\,:\, |\k|_1-T|\k|_\infty\le (1-T)n\}.
%\end{equation}
%and
%\begin{equation}\label{set3}
%  \D'(\xi)=\{\k\in \Z_+^d\,:\, |\k|_1\le n\}
%\end{equation}
%with appropriate chosen parameters $\xi, \a,\b,\g,\e,p,q$.

\medskip

We will need the following Bernstein inequality.

\begin{lemma}\label{leB}
  Let $1\le q\le \infty$, $\min\{\a,\a+\b-\g\}>0$, and $\elll\in \Z_+^d$. Then, for any $f\in \mathcal{T}_\elll^d$, we have
  \begin{equation*}%\label{leB.1}
    \|f\|_{A_q^{\a,\b}(\T^d)}\le 2^{\a|\elll|_1+(\b-\g)|\elll|_\infty}\|f\|_{A_q^\g(\T^d)}.
  \end{equation*}
\end{lemma}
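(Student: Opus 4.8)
The plan is to pass to the dyadic-block (Littlewood--Paley) descriptions of the two norms and reduce the estimate to an elementary inequality between the block exponents. First I would use Lemma~\ref{le1} to write, for any admissible $g$,
\[
\|g\|_{A_q^{\a,\b}(\T^d)}=\Big(\sum_{\j\in\Z_+^d}2^{q(\a|\j|_1+\b|\j|_\infty)}\|\d_\j(g)\|_{A_q(\T^d)}^q\Big)^{1/q},
\]
and, since $A_q^\g=A_q^{0,\g}$,
\[
\|g\|_{A_q^{\g}(\T^d)}=\Big(\sum_{\j\in\Z_+^d}2^{q\g|\j|_\infty}\|\d_\j(g)\|_{A_q(\T^d)}^q\Big)^{1/q},
\]
with the usual sup-modification when $q=\infty$. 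Because $f\in\Tp_\elll^d$ has spectrum in $D_{\ell_1}\times\dots\times D_{\ell_d}$, the block projection $\d_\j(f)$ vanishes unless $\Pp_\j$ meets this set, i.e. unless $j_i\le\ell_i$ for all $i$; so both sums run only over the $\j$ with $\mathbf 0\le\j\le\elll$ (coordinatewise), which is where all the work takes place.

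Next I would factor, for each such $\j$, the weight as $2^{q(\a|\j|_1+\b|\j|_\infty)}=2^{q(\a|\j|_1+(\b-\g)|\j|_\infty)}\,2^{q\g|\j|_\infty}$. Since $\|\d_\j(f)\|_{A_q(\T^d)}$ is common to both norms, it then suffices to bound the first factor by $2^{q(\a|\elll|_1+(\b-\g)|\elll|_\infty)}$ uniformly in $\j\le\elll$; summing the resulting inequality against $2^{q\g|\j|_\infty}\|\d_\j(f)\|_{A_q(\T^d)}^q$ and taking $q$-th roots then produces exactly the claimed factor, with no dimensional constant. Thus everything reduces to the scalar inequality
\[
\a|\j|_1+(\b-\g)|\j|_\infty\ \le\ \a|\elll|_1+(\b-\g)|\elll|_\infty\qquad(\mathbf 0\le\j\le\elll).
\]

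To prove this, set $A=|\elll|_1-|\j|_1=\sum_i(\ell_i-j_i)$ and $B=|\elll|_\infty-|\j|_\infty$. All summands $\ell_i-j_i$ are nonnegative; choosing an index $m$ with $\ell_m=|\elll|_\infty$ gives $0\le B\le \ell_m-j_m\le A$. Hence
\[
\big(\a|\elll|_1+(\b-\g)|\elll|_\infty\big)-\big(\a|\j|_1+(\b-\g)|\j|_\infty\big)=\a A+(\b-\g)B=\a(A-B)+(\a+\b-\g)B\ge0,
\]
the last step using $A-B\ge0$, $B\ge0$ and both $\a>0$ and $\a+\b-\g>0$, i.e. precisely $\min\{\a,\a+\b-\g\}>0$. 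I expect the one genuinely delicate point to be the estimate $B\le A$ (the drop of the $\ell_\infty$-block index never exceeds the drop of the $\ell_1$-block index), which is exactly what allows the single $|\cdot|_\infty$-weight to be absorbed, after which the hypothesis splits cleanly into its two positivity halves. I would also stress why the block norm is essential here: in the raw Fourier-coefficient norm the factor $(1+|\k|)^{\b-\g}$ can be as large as $(\sqrt d\,2^{|\elll|_\infty})^{\b-\g}$ on the spectrum of $f$, which would spoil the clean constant for large $d$; replacing $(1+|\k|)^\b$ by the block weight $2^{\b|\j|_\infty}$ is what removes this dimensional loss.
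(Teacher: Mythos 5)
Your proof is correct and takes essentially the same route as the paper's, which simply defers to the proof of Lemma~2.10 in~\cite{BDSU16}: pass to the dyadic-block norms (Lemma~\ref{le1}), use that $\d_\j(f)=0$ unless $\j\le\elll$ componentwise, and exploit monotonicity of the exponent $\a|\j|_1+(\b-\g)|\j|_\infty$ over $\j\le\elll$. The only remark worth adding is that your scalar inequality is precisely the paper's Lemma~\ref{lemon} applied with $\b-\g$ in place of $\b$ (discarding the term $\e|\k'-\k|_1$), so it could have been cited rather than reproved.
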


\begin{proof}
The proof is similar to the proof of Lemma~2.10 in~\cite{BDSU16}.
\end{proof}

\begin{lemma}\label{lemon} (See~\cite{BDSU16})
Let $\a>0$, $\b\in \R$, $\e=\min(\a,\a+\b)>0$, and
$$
\psi(\k):=\a|\k|_1+\b|\k|_\infty,\quad \k\in \Z_+^d.
$$
Then the inequality
$$
\psi(\k)\le \psi(\k')-\e|\k'-\k|_1
$$
holds for all $\k,\k'\in \Z_+^d$ with $\k'\ge \k$ componentwise.
\end{lemma}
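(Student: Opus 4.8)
The plan is to pass to the increment variable $\u=\k'-\k$, which lies in $\Z_+^d$ since $\k'\ge\k$ componentwise, and to reduce the claim to a single scalar inequality that is then dispatched by the sign of $\b$. First I would rewrite the assertion $\psi(\k)\le\psi(\k')-\e|\k'-\k|_1$ equivalently as $\psi(\k')-\psi(\k)\ge\e|\u|_1$. Because every coordinate of $\k$ and of $\u$ is nonnegative, the $\ell^1$-norm is additive, $|\k'|_1=|\k|_1+|\u|_1$, whence
$$
\psi(\k')-\psi(\k)=\a|\u|_1+\b\bigl(|\k'|_\infty-|\k|_\infty\bigr).
$$

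Next I would isolate the max-norm term. Set $m:=|\k'|_\infty-|\k|_\infty$. Componentwise monotonicity $\k'\ge\k\ge 0$ gives $m\ge 0$, while the triangle inequality for the max-norm yields $m\le|\k'-\k|_\infty=|\u|_\infty\le|\u|_1$. Thus $0\le m\le|\u|_1$, and the entire statement collapses to verifying $\a|\u|_1+\b m\ge\e|\u|_1$, i.e. $\b m\ge(\e-\a)|\u|_1$. I would then split on the sign of $\b$. If $\b\ge 0$, then $\e=\a$, the right-hand side vanishes, and $\b m\ge 0$ finishes the case. If $\b<0$, then $\e=\a+\b$, so $\e-\a=\b$ and the inequality reads $\b m\ge\b|\u|_1$; dividing by the negative number $\b$ reverses it to $m\le|\u|_1$, which is precisely the bound already established. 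In either case the desired inequality holds.

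The step demanding the most care is the treatment of the max-norm contribution when $\b<0$: since $\b$ multiplies $m$ with a negative coefficient, this term cannot be discarded, and the decisive quantitative input is the clean estimate $m\le|\u|_\infty\le|\u|_1$ furnished by the $\ell^\infty$ triangle inequality together with the hypothesis $\k'\ge\k$. Everything else is bookkeeping resting on the additivity $|\k'|_1=|\k|_1+|\u|_1$, which is valid exactly because $\u\ge 0$; this is also where the assumption $\e=\min(\a,\a+\b)>0$ enters, guaranteeing that the extracted constant is positive.
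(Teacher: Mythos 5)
Your proof is correct: the reduction to $\a|\u|_1+\b m\ge\e|\u|_1$ via the additivity $|\k'|_1=|\k|_1+|\u|_1$ and the bound $0\le m\le|\u|_\infty\le|\u|_1$, followed by the case split on the sign of $\b$, is exactly the standard argument. The paper itself does not prove this lemma but defers to~\cite{BDSU16}, and your argument is essentially the one given there, so there is nothing to add.
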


%Recall that
%\begin{equation*}
%%\label{se-}
% \D(n,T)=\{\k\in \Z_+^d\,:\, |\k|_1-T|\k|_\infty\le (1-T)n\}.
%\end{equation*}

\begin{lemma}\label{lek}
  Let $T<1$, $r<t$, and $t\ge 0$. Then, for all $n\in \N$,
\begin{equation}\label{lek.1}
  \sum_{\k\not\in \D(n,T)}2^{-t|\k|_1+r|\k|_\infty}\lesssim \left\{
                                                             \begin{array}{ll}
                                                               2^{-\(t-r-(tT-r)\frac{d-1}{d-T}\)n}n^{d-1}, & \hbox{$T\ge \frac rt$,} \\
                                                               2^{-(t-r)n}, & \hbox{$T<\frac rt$}
                                                             \end{array}
                                                           \right.
\end{equation}
and
\begin{equation}\label{lek.2}
  \sup_{\k\not\in \D(n,T)}2^{-t|\k|_1+r|\k|_\infty}\lesssim \left\{
                                                             \begin{array}{ll}
                                                               2^{-\(t-r-(tT-r)\frac{d-1}{d-T}\)n}, & \hbox{$T\ge \frac rt$,} \\
                                                               2^{-(t-r)n}, & \hbox{$T<\frac rt$.}
                                                             \end{array}
                                                           \right.
\end{equation}
\end{lemma}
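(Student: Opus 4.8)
The plan is to dispose of the supremum \eqref{lek.2} first, as a linear optimization, and then to obtain the sum \eqref{lek.1} from a sharp lattice-point count concentrated near the optimizer.

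For \eqref{lek.2}, since $x\mapsto 2^x$ is increasing, it is enough to maximize the linear functional $F(\k)=-t|\k|_1+r|\k|_\infty$ over the relaxed region $\{\k:\,|\k|_1-T|\k|_\infty>(1-T)n\}$. I would set $m=|\k|_\infty$ and $\ell=|\k|_1$, subject to $m\le\ell\le dm$ and $\ell>(1-T)n+Tm$; the region is nonempty only for $m>m_0:=\tfrac{(1-T)n}{d-T}$. Since the coefficient $-t\le 0$, for each fixed $m$ the maximum is attained at the least admissible $\ell=\max\{m,(1-T)n+Tm\}$, which reduces matters to maximizing a piecewise-linear function of $m$ whose slope on $(m_0,n]$ is exactly $r-tT$ and whose slope on $[n,\infty)$ is $r-t<0$. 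When $T\ge r/t$ the slope $r-tT$ is nonpositive, so the maximum sits at $m=m_0$, i.e.\ at the near-diagonal point $\k\approx m_0(1,\dots,1)$, and a short computation gives the exponent $t-r-(tT-r)\tfrac{d-1}{d-T}$; when $T<r/t$ the slope is positive and the maximum sits at the corner $\k\approx(n,0,\dots,0)$, giving the exponent $t-r$.

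For \eqref{lek.1} I would expand
\[
\sum_{\k\notin\D(n,T)}2^{-t|\k|_1+r|\k|_\infty}=\sum_{m\ge 0}2^{rm}\sum_{\max\{m,\,(1-T)n+Tm\}<\ell\le dm}N(\ell,m)\,2^{-t\ell},
\]
where $N(\ell,m)=\#\{\k\in\Z_+^d:|\k|_1=\ell,\ |\k|_\infty=m\}$. The decisive input is the sharp bound $N(\ell,m)\lesssim(\ell-m+1)^{d-2}$ (choose the coordinate realizing the maximum, then distribute the remaining mass $\ell-m$ among the other $d-1$ slots), which makes corner points cheap and is what prevents a spurious power of $n$ there. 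Writing $p=\ell-m\ge 0$, the inner sum becomes, for $t>0$, a geometric series with ratio $2^{-t}<1$ dominated by its lower endpoint $p>\max\{0,(1-T)(n-m)\}$. Summing in $p$ leaves an outer sum in $m$ which I split at $m=n$: the tail $m\ge n$ contributes $\lesssim 2^{-(t-r)n}$, while the window $m_0<m<n$, after the substitution $q=n-m$, yields a geometric sum $\sum_q\bigl((1-T)q+1\bigr)^{d-2}2^{(tT-r)q}$. The sign of $tT-r$ --- precisely the dichotomy between $T\ge r/t$ and $T<r/t$ --- decides everything: for $tT-r<0$ it converges and gives $\lesssim 2^{-(t-r)n}$, whereas for $tT-r\ge 0$ it is dominated by $q\approx n-m_0$ and reproduces the first exponent $t-r-(tT-r)\tfrac{d-1}{d-T}$, up to a polynomial factor bounded by $n^{d-1}$ (the extra power reflecting the whole near-diagonal face of near-maximizers).

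The main obstacle is the bookkeeping in the sum rather than any deep idea: one must respect the nonemptiness threshold $m>m_0$ (ignoring it grossly overestimates the diagonal regime) and must track which endpoint dominates each geometric series, since that is exactly what switches the $n^{d-1}$ factor on for $T\ge r/t$ and off for $T<r/t$. Two boundary situations need separate, easy handling: when $t=0$ (which forces $r<0$) the inner series is no longer geometric, so I would instead sum over $m$ directly using $\#\{|\k|_\infty=m\}\lesssim(m+1)^{d-1}$ together with the threshold $m>m_0$; and the case $d=1$ is immediate.
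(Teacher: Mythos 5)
Your proposal is correct, and it is genuinely different from what the paper does --- for the simple reason that the paper does not actually prove the lemma: estimate \eqref{lek.1} is quoted from the proof of Theorem~4 in Knapek's dissertation \cite{Kn00}, and estimate \eqref{lek.2} is dispatched with the remark that it follows by standard arguments via Lagrange multipliers and Kuhn--Tucker conditions. Your argument supplies a self-contained elementary proof of both. For \eqref{lek.2}, reducing to the two scalar variables $(m,\ell)=(|\k|_\infty,|\k|_1)$ subject to $m\le\ell\le dm$ and $\ell>(1-T)n+Tm$ replaces the KKT machinery by an explicit piecewise-linear maximization, and it makes transparent both the dichotomy at $T=\frac rt$ (the slope $r-tT$ on $(m_0,n]$ changes sign there) and the location of the extremizers (the near-diagonal point with $m_0=\frac{(1-T)n}{d-T}$ versus the corner $(n,0,\dots,0)$); the value at $m=m_0$ does simplify to the exponent $-\bigl(t-r-(tT-r)\frac{d-1}{d-T}\bigr)n$, as you assert. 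For \eqref{lek.1}, the counting bound $N(\ell,m)\lesssim(\ell-m+1)^{d-2}$, the summation in $p=\ell-m$ (legitimate since you treat $t>0$ there), and the substitution $q=n-m$ reproduce exactly the stated bounds; in fact your computation proves slightly more than claimed, namely that for $T>\frac rt$ the polynomial factor can be taken to be $n^{d-2}$, the full $n^{d-1}$ being needed only at the threshold $T=\frac rt$, where $tT-r=0$ and the sum over $q$ consists of $\asymp n$ polynomially large terms. Your separate handling of $t=0$ (which forces $r<0$, summing over $m>m_0$ with $\#\{\k:|\k|_\infty=m\}\lesssim(m+1)^{d-1}$) and of $d=1$ is exactly what is needed to close those degenerate cases. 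What the paper's route buys is brevity; what yours buys is a verifiable proof with visible dependence of the constants on $t,r,T,d$.

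Two small bookkeeping points, neither of which affects correctness. First, writing the inner range as $\max\{m,(1-T)n+Tm\}<\ell\le dm$ omits the points with $\ell=m>n$ (i.e.\ $\k$ equal to $m$ times a coordinate vector), which do lie outside $\D(n,T)$; the correct constraints are $\ell\ge m$ together with $\ell>(1-T)n+Tm$. These omitted points contribute at most $\sum_{m>n}d\,2^{(r-t)m}\lesssim 2^{-(t-r)n}$, so all final estimates stand, but the displayed identity should be adjusted. Second, your implied constants degenerate as $tT-r\to 0^{+}$ (through factors like $\bigl(1-2^{-(tT-r)}\bigr)^{-1}$); this is admissible here because $t,r,T,d$ are fixed parameters of the lemma, but it is worth stating explicitly.
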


\begin{proof}
  Estimate~\eqref{lek.1} can be found  in the proof of~\cite[Theorem~4]{Kn00}. Estimate~\eqref{lek.2} can be proved by standard arguments using the method of Lagrange multipliers and Kuhn-Tacker conditions.
%We would like to mention that in~\cite{} there are important typos in
\end{proof}

\section{Littlewood-Paley type characterizations}

\begin{proposition}\label{pr1}
  Let $1\le q\le \infty$, $\a>0$, $\b\in \R$, $\a+\b>0$, and let $Q=(Q_j(\cdot,\vp_j,\w\vp_j))_{j\in \Z_+}$, where $(\vp_j)_{j\in \Z_+}$ and $(\w\vp_j)_{j\in \Z_+}$ be such that $\w\vp_j\in \mathcal{D}'(\T)$ and $\vp_j\in \mathcal{T}_j^1$ for each $j\in \Z_+$. Suppose conditions \eqref{c1}, \eqref{c2}, and \eqref{c3} are satisfied with the parameters $N\ge 0$ and $s>\max(\a+\b,\a)$.  Assume also that

  \begin{itemize}
    \item[$(i)$] $\min(\a+\b,\a)>N+1/q'$

    or

    \item[$(ii)$] $N=0$ and $\sup_{j\in\Z_+}\|\w\vp_j\|_{\w A_{q',j}(\T)}<\infty$.
  \end{itemize}
  Then every function $f\in A_q^{\a,\b}(\T^d)$ can be represented by the series
  \begin{equation}\label{pr1.1}
    f=\sum_{\j\in \Z_+^d} \eta_\j^Q(f),
  \end{equation}
  which converges unconditionally in $A_q^{\w\a,\b}(\T^d)$ with $0\le\w\a<\a$ and satisfies
  \begin{equation}\label{pr1.2}
    \bigg(\sum_{\j\in \Z_+^d} 2^{q(\a|\j|_1+\b|\j|_\infty)}\|\eta_\j^Q(f)\|_{A_q(\T^d)}^q\bigg)^{1/q}\lesssim \|f\|_{A_q^{\a,\b}(\T^d)}.
  \end{equation}
%with the usual modification for $q=\infty$.
\end{proposition}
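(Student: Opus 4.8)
The plan is to transfer the whole estimate to the univariate level, exploiting the tensor-product structure $\eta_\j^Q=\prod_{i=1}^d(Q_{j_i}^i-Q_{j_i-1}^i)$ together with the equivalent norm of Lemma~\ref{le1}. Since $\|h\|_{A_q(\T^d)}^q=\sum_{\k\in\Z^d}|\h h(\k)|^q$ is multiplicative across coordinates and each difference $\eta_{j}:=Q_j-Q_{j-1}$ acts in a single variable, I would slice $\d_\k(f)$ variable by variable: for fixed values of the other frequencies a slice is a univariate function with spectrum contained in $P_{k_i}$, so any univariate bound $\|\eta_{j}g\|_{A_q(\T)}\lesssim b_{j,k}\|g\|_{A_q(\T)}$ valid for all $g$ with $\supp\h g\subseteq P_k$ tensorizes, and by induction on $i$ (applying $\eta_{j_i}^i$ never disturbs the spectrum in the remaining variables) one gets
\begin{equation*}
  \|\eta_\j^Q(\d_\k(f))\|_{A_q(\T^d)}\lesssim\prod_{i=1}^d b_{j_i,k_i}\,\|\d_\k(f)\|_{A_q(\T^d)}.
\end{equation*}

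Next I would supply the univariate weights $b_{j,k}$ in two regimes. For $k\le j$ I apply Lemma~\ref{leKKS} with smoothness exponent $s$ (admissible since $s>\max(\a,\a+\b)\ge\min(\a,\a+\b)>N+1/q'$ under $(i)$, and unconditionally under $(ii)$) to $g$ with $\supp\h g\subseteq P_k$; combined with $\|g\|_{A_q^s(\T)}\asymp2^{ks}\|g\|_{A_q(\T)}$ this gives the decay $b_{j,k}\lesssim2^{-s(j-k)}$. For $k>j$ there is aliasing, which I estimate exactly as in the proof of~\eqref{kb}: on each residue class the aliasing sum runs over $\asymp2^{k-j}$ frequencies, and H\"older together with~\eqref{c1} yields $b_{j,k}\lesssim2^{(N+1/q')(k-j)}$ under $(i)$, while the amalgam bound in $(ii)$ (with $N=0$) gives the sharper $b_{j,k}\lesssim1$. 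Put $\lambda=N+1/q'$ in case $(i)$ and $\lambda=0$ in case $(ii)$.

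The core step is the weighted summation. Writing $\psi(\k)=\a|\k|_1+\b|\k|_\infty$, $M=\max(\a,\a+\b)$, $\e=\min(\a,\a+\b)$, I estimate $\psi(\j)-\psi(\k)$ along the monotone path $\k\to\k\vee\j\to\j$ (here $\k\vee\j$ is the componentwise maximum): increasing a coordinate changes $\psi$ by $\a$ or $\a+\b$, hence by at most $M$, and decreasing one lowers $\psi$ by at least $\e$ (Lemma~\ref{lemon} is the statement behind this); so with $A=\{i:j_i>k_i\}$, $B=\{i:j_i<k_i\}$,
\begin{equation*}
  \psi(\j)-\psi(\k)\le M\sum_{i\in A}(j_i-k_i)-\e\sum_{i\in B}(k_i-j_i).
\end{equation*}
Multiplying by $\prod_i b_{j_i,k_i}$ collapses the kernel $K(\j,\k):=2^{\psi(\j)-\psi(\k)}\prod_i b_{j_i,k_i}$ to
\begin{equation*}
  K(\j,\k)\le 2^{-(s-M)\sum_{i\in A}(j_i-k_i)-(\e-\lambda)\sum_{i\in B}(k_i-j_i)}\le 2^{-\delta|\j-\k|_1},\qquad \delta=\min(s-M,\e-\lambda)>0,
\end{equation*}
where positivity of $\delta$ is precisely the hypotheses $s>\max(\a,\a+\b)$ and $\min(\a,\a+\b)>\lambda$. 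Since $\sup_\j\sum_\k2^{-\delta|\j-\k|_1}$ and $\sup_\k\sum_\j2^{-\delta|\j-\k|_1}$ are finite, Schur's test makes the map $(\d_\k(f))_\k\mapsto(\eta_\j^Q f)_\j$ bounded between the weighted $\ell_q$ spaces; applied to $c_\k=2^{\psi(\k)}\|\d_\k(f)\|_{A_q(\T^d)}$, whose $\ell_q$ norm is $\asymp\|f\|_{A_q^{\a,\b}(\T^d)}$ by Lemma~\ref{le1}, this is~\eqref{pr1.2}.

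Finally, for the representation~\eqref{pr1.1} I would use the telescoping identity $\sum_{|\j|_\infty\le n}\eta_\j^Q=\prod_{i=1}^dQ_n^i$, which identifies the square partial sums with the tensor-product operator $Q_n^{\otimes}f$; iterating Lemma~\ref{leKKS} one variable at a time gives $Q_n^{\otimes}f\to f$ in $A_q^{\w\a,\b}(\T^d)$. Unconditional convergence then follows from $\sum_\j\|\eta_\j^Q f\|_{A_q^{\w\a,\b}(\T^d)}<\infty$; where Lemma~\ref{leB} applies this comes from the bound $\|\eta_\j^Q f\|_{A_q^{\w\a,\b}}\lesssim2^{\w\a|\j|_1+\b|\j|_\infty}\|\eta_\j^Q f\|_{A_q}$, inserting the gain $2^{(\w\a-\a)|\j|_1}$ with $\w\a<\a$, and applying H\"older against~\eqref{pr1.2}. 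The main obstacle I anticipate is the high-frequency regime $k>j$: quantifying the aliasing growth $2^{(N+1/q')(k-j)}$ and then verifying that the non-separable weight $|\k|_\infty$ still yields geometric decay of $K$ is exactly what forces $s>\max(\a,\a+\b)$ and $\min(\a,\a+\b)>N+1/q'$. A secondary delicate point is the sign of $\b$ in the convergence step, where the offending low-frequency terms are tamed only because $\eta_j$ annihilates constants and suppresses near-constant frequencies by the factor $2^{-s(j-k)}$, so for small $\w\a$ or $\b<0$ one must argue block by block rather than through crude Bernstein.
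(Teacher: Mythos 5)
Your argument for the core estimate \eqref{pr1.2} is correct, and it runs on the same mechanism as the paper's Step~1, just packaged differently. The paper expands $f$ into shifted blocks $\d_{\j+\elll}(f)$ and iterates coordinate by coordinate, splitting each coordinate sum into a compatibility regime ($\ell_1<-1$, handled via \eqref{c3} and Bernstein, giving the factor $2^{(s-\max(\a+\b,\a))\ell_1}$) and an aliasing regime ($\ell_1\ge -1$, handled via Lemma~\ref{leKKS}$(i)$ with an auxiliary $\zeta\in(N+1/q',\min(\a,\a+\b))$ together with Lemma~\ref{lemon}). Your two regimes $b_{j,k}\lesssim 2^{-s(j-k)}$ for $k\le j$ and $b_{j,k}\lesssim 2^{\lambda(k-j)}$ for $k>j$ (with $\lambda=N+1/q'$ or $\lambda=0$) are exactly these estimates, and your path estimate through $\k\vee\j$ is the same use of Lemma~\ref{lemon}. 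What your organization buys: the tensorization of the univariate bounds (legitimate, because the $A_q(\T^d)$-norm slices across coordinates and $\eta_{j_i}^i$ does not disturb the spectrum in the remaining variables) replaces the paper's $d$ nested coordinate inductions by a single Schur test against the explicit kernel $2^{-\delta|\j-\k|_1}$; moreover, your direct H\"older computation of the aliasing growth $2^{(N+1/q')(k-j)}$ (which is essentially the computation behind \eqref{kb}) avoids the paper's detour through Lemma~\ref{leKKS} with the auxiliary parameter $\zeta$. This is a genuine, and in my view cleaner, reorganization of the same proof.

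The one soft spot is the identification of the limit in \eqref{pr1.1}. Your unconditional-convergence step is the paper's estimate \eqref{f12} verbatim and is fine. However, the claim that ``iterating Lemma~\ref{leKKS} one variable at a time gives $\prod_{i=1}^d Q_n^i f\to f$ in $A_q^{\w\a,\b}(\T^d)$'' is not justified as stated: Lemma~\ref{leKKS} is univariate, and to run the telescoping $I-\prod_{i=1}^dQ_n^i=\sum_{i=1}^d\bigl(\prod_{l<i}Q_n^l\bigr)(I-Q_n^i)$ you would need both uniform boundedness of $Q_n^i$ on the hybrid space $A_q^{\w\a,\b}(\T^d)$ and smallness of $\|(I-Q_n^i)f\|_{A_q^{\w\a,\b}(\T^d)}$. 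Neither is available off the shelf: the weight $(1+|\k|)^{\b}$ does not factor across variables, and for $\b<0$ aliasing in variable $i$ can inflate this weight by a factor of order $2^{n|\b|}$ --- precisely the difficulty you flag at the end. The repair is easy and does not require strong convergence of the product operators: since unconditional convergence in $A_q^{\w\a,\b}(\T^d)$ is already established, the square partial sums converge to some $F$, and convergence in this norm implies coefficient-wise convergence; so it suffices to verify $\h{\bigl(\prod_{i}Q_n^if\bigr)}(\nu)\to\h f(\nu)$ for each fixed $\nu$, which is an explicit aliasing computation from \eqref{c1}--\eqref{c3} together with the decay of $\h f$ under $(i)$, or the amalgam bound under $(ii)$. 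Alternatively, the paper sidesteps the issue entirely by checking the representation only on exponentials (where the error $I_1(m)\to0$ follows from \eqref{c3}) and then invoking density of trigonometric polynomials together with \eqref{f12}; either completion closes the gap.
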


\begin{proof}
\emph{Step 1.} First we prove the proposition assuming that the condition in $(i)$ holds.
Let $f\in A_q^{\a,\b}(\T^d)$ and $\j\in \Z_+^d$. We have
\begin{equation}\label{f2}
  f(\x)=\sum_{\elll\in \Z^d} \d_{\j+\elll}(f)(\x),
\end{equation}
where we set $\d_{\j+\elll}(f)=0$ for $\j+\elll\in \Z^d \setminus \Z_+^d$.
%In view of the embedding $\mathbf{F}_{p,\t}^r(\T^d)\hookrightarrow C(\T^d)$, see~\eqref{e1}, the series~\eqref{f2} converges unconditionally in $C(\T^d)$. Thus,
In light of~\eqref{f2}, we get
\begin{equation*}%\label{f3}
  |\eta_\j^Q(f)(\x)|\le \sum_{\elll\in \Z^d} |\eta_\j^Q\(\d_{\j+\elll}(f)\)(\x)|
\end{equation*}
and, therefore,
\begin{equation}\label{f3}
  \|\eta_\j^Q(f)\|_{A_q(\T^d)}\le \sum_{\elll\in \Z^d} \|\eta_\j^Q\(\d_{\j+\elll}(f)\)\|_{A_q(\T^d)}.
\end{equation}
In what follows, for simplicity we consider only the case $q<\infty$. The case $q=\infty$ can be treated similarly.
Multiplying by $2^{\a|\j|_1+\b|\j|_\infty}$ and taking $\ell_q$-norm on both sides of~\eqref{f3}, we obtain
\begin{equation}\label{f4}
  \begin{split}
    \bigg(\sum_{\j\in \Z_+^d} &2^{q(\a|\j|_1+\b|\j|_\infty)}\|\eta_\j^Q(f)\|_{A_q(\T^d)}^q\bigg)^{1/q}\\
    &\le \sum_{\elll\in \Z^d}\bigg(\sum_{\j\in \Z_+^d} 2^{q(\a|\j|_1+\b|\j|_\infty)}\|\eta_\j^Q(\d_{\j+\elll}(f))\|_{A_q(\T^d)}^q\bigg)^{1/q}\\
    &=\sum_{\elll_2^d\in \Z^{d-1}}\sum_{-j_1\le \ell_1<-1}(\dots)+\sum_{\elll_2^d\in \Z^{d-1}}\sum_{\ell_1\ge -1}(\dots)=S_1+S_2,
  \end{split}
\end{equation}
where $\elll_k^d=(\ell_k,\dots,\ell_d)$, $k=2,\dots,d$.

Consider the sum $S_1$. Denoting
$$
\eta_{\j_k^d}^Q=\eta_{\j_k^d,k}^Q=\prod_{i=k}^d (Q_{j_i}^i-Q_{j_i-1}^i),\quad k=2,\dots,d,
$$
where $Q_{j_i}^i$ is the univariate operator $Q_{j_i}(\cdot,\vp_{j_i},\w\vp_{j_i})$ acting on functions in the variable $x_i$,
we obtain
\begin{equation}\label{f5}
  \begin{split}
    S_1&=\sum_{\elll_2^d\in \Z^{d-1}}\sum_{-j_1\le \ell_1<-1}\bigg(\sum_{\j\in \Z_+^d} 2^{q(\a|\j|_1+\b|\j|_\infty)}\|(Q_{j_1}^1-Q_{j_1-1}^1)\eta_{\j_2^d}^Q(\d_{\j+\elll}(f))\|_{A_q(\T^d)}^q\bigg)^{1/q}\\
    &\le \sum_{b\in \{-1,0\}}\sum_{\elll_2^d\in \Z^{d-1}}\sum_{-j_1\le\ell_1<-1}\bigg(\sum_{\j\in \Z_+^d} 2^{q(\a|\j|_1+\b|\j|_\infty)}\|(Q_{j_1+b}^1-I)\eta_{\j_2^d}^Q(\d_{\j+\elll}(f))\|_{A_q(\T^d)}^q\bigg)^{1/q},\\
  \end{split}
\end{equation}
where $I$ is the identity operator.
Taking into account that
$
Q_j(t,\vp,\w\vp)=\w\vp_j*\vp_j*t
$
for any trigonometric polynomial $t\in \mathcal{T}_{j-1}^1$ and using condition~\eqref{c3} and Bernstein's inequality, we derive that
\begin{equation}\label{f6}
  \begin{split}
     \sum_{\j\in \Z_+^d} &2^{q(\a|\j|_1+\b|\j|_\infty)}\|(Q_{j_1+b}^1-I)\eta_{\j_2^d}^Q(\d_{\j+\elll}(f))\|_{A_q(\T^d)}^q\\
     &\lesssim\sum_{\j\in \Z_+^d} 2^{q(\a|\j|_1+\b|\j|_\infty-sj_1)}\bigg\|\frac{\partial^s}{\partial x_1^s}\eta_{\j_2^d}^Q(\d_{\j+\elll}(f))\bigg\|_{A_q(\T^d)}^q\\
     &\lesssim \sum_{\j\in \Z_+^d} 2^{q(\a|\j|_1+\b|\j|_\infty+s\ell_1)}\|\eta_{\j_2^d}^Q(\d_{\j+\elll}(f))\|_{A_q(\T^d)}^q\\
     &\lesssim 2^{(s-\a)\ell_1 q} \sum_{\j\in \Z_+^d} 2^{q(\a|\j+\ell_1\ee_1|_1+\b|\j|_\infty)}\|\eta_{\j_2^d}^Q(\d_{\j+\elll}(f))\|_{A_q(\T^d)}^q\\
     &\lesssim 2^{(s-\max(\a+\b,\a))\ell_1 q} \sum_{\j\in \Z_+^d} 2^{q(\a|\j+\ell_1\ee_1|_1+\b|\j+\ell_1\ee_1|_\infty)}\|\eta_{\j_2^d}^Q(\d_{\j+\elll}(f))\|_{A_q(\T^d)}^q,\\
  \end{split}
\end{equation}
where in the last inequality we use the estimates
$$
|\j|_\infty\le |\j+\ell_1\ee_1|_\infty+|\ell_1\ee_1|_\infty=|\j+\ell_1\ee_1|_\infty-\ell_1\quad\text{in the case}\quad \b\ge 0
$$
and
$$
|\j|_\infty\ge |\j+\ell_1\ee_1|_\infty\quad\text{in the case}\quad \b< 0.
$$
Next, combining~\eqref{f5} and~\eqref{f6} and using the fact that $\sum_{\ell_{1}<-1} 2^{(s-\max(\a+\b,\a))\ell_1} <\infty$, we obtain
\begin{equation}\label{f7}
%  \begin{split}
    S_1\lesssim \sum_{\elll_2^d\in \Z^{d-1}}\bigg(\sum_{\j\in \Z_+^d} 2^{q(\a|\j|_1+\b|\j|_\infty)}\|\eta_{\j_2^d}^Q(\d_{j_1,j_2+\ell_2,\dots,j_d+\ell_d}(f))\|_{A_q(\T^d)}^q\bigg)^{1/q}.
%  \end{split}
\end{equation}

Now, we consider the sum $S_2$. Similar to~\eqref{f5}, we have
\begin{equation}\label{f8}
  \begin{split}
    S_2&=\sum_{\elll_2^d\in \Z^{d-1}}\sum_{\ell_1\ge -1}\bigg(\sum_{\j\in \Z_+^d} 2^{q(\a|\j|_1+\b|\j|_\infty)}\|(Q_{j_1}^1-Q_{j_1-1}^1)\eta_{\j_2^d}^Q(\d_{\j+\elll}(f))\|_{A_q(\T^d)}^q\bigg)^{1/q}\\
    &\le \sum_{b\in \{-1,0\}}\sum_{\elll_2^d\in \Z^{d-1}}\sum_{\ell_1\ge -1}\bigg(\sum_{\j\in \Z_+^d} 2^{q(\a|\j|_1+\b|\j|_\infty)}\|(Q_{j_1+b}^1-I)\eta_{\j_2^d}^Q(\d_{\j+\elll}(f))\|_{A_q(\T^d)}^q\bigg)^{1/q}.\\
  \end{split}
\end{equation}
Choosing $\zeta$ such that  $N+1/q'<\zeta<\min(\a,\a+\b)$ and applying Lemma~\ref{leKKS}$(i)$, Bernstein's inequality, and Lemma~\ref{lemon}, we obtain for $\ell_1\ge 0$ that
\begin{equation}\label{f9}
  \begin{split}
     &\sum_{\j\in \Z_+^d} 2^{q(\a|\j|_1+\b|\j|_\infty)}\|(Q_{j_1+b}^1-I)\eta_{\j_2^d}^Q(\d_{\j+\elll}(f))\|_{A_q(\T^d)}^q\\
     &\lesssim \sum_{\j\in \Z_+^d} 2^{q(\a|\j|_1+\b|\j|_\infty-\zeta j_1)}\|\eta_{\j_2^d}^Q(\d_{\j+\elll}(f))\|_{A_q^{(\zeta,0,\dots,0)}(\T^d)}^q\\
     &\lesssim \sum_{\j\in \Z_+^d} 2^{q(\a|\j|_1+\b|\j|_\infty+\zeta \ell_1)}\|\eta_{\j_2^d}^Q(\d_{\j+\elll}(f))\|_{A_q^{}(\T^d)}^q\\
     &\lesssim 2^{-q(\min(\a,\a+\b)-\zeta)\ell_1}\sum_{\j\in \Z_+^d} 2^{q(\a|\j+\ell_1\ee_1|_1+\b|\j+\ell_1\ee_1|_\infty)}\|\eta_{\j_2^d}^Q(\d_{\j+\elll}(f))\|_{A_q^{}(\T^d)}^q\\
     &\lesssim 2^{-q(\min(\a,\a+\b)-\zeta)\ell_1}\sum_{\j\in \Z_+^d} 2^{q(\a|\j|_1+\b|\j|_\infty)}\|\eta_{\j_2^d}^Q(\d_{j_1,j_2+\ell_2,\dots,j_d+\ell_d}(f))\|_{A_q^{}(\T^d)}^q.\\
  \end{split}
\end{equation}
A similar estimate clearly holds for $\ell_1=-1$. Thus, combining~\eqref{f8} and~\eqref{f9} and taking into account that
$\sum_{\ell_1\ge -1} 2^{-(\min(\a,\a+\b)-\zeta)\ell_1}<\infty$, we get
\begin{equation}\label{f10}
%  \begin{split}
    S_2\lesssim \sum_{\elll_2^d\in \Z^{d-1}}\bigg(\sum_{\j\in \Z_+^d} 2^{q(\a|\j|_1+\b|\j|_\infty)}\|\eta_{\j_2^d}^Q(\d_{j_1,j_2+\ell_2,\dots,j_d+\ell_d}(f))\|_{A_q(\T^d)}^q\bigg)^{1/q}.
%  \end{split}
\end{equation}
In the next step, collecting~\eqref{f4}, \eqref{f7}, and~\eqref{f10} implies
\begin{equation*}%\label{f11}
\begin{split}
\bigg(\sum_{\j\in \Z_+^d} &2^{q(\a|\j|_1+\b|\j|_\infty)}\|\eta_\j^Q(f)\|_{A_q(\T^d)}^q\bigg)^{1/q}\\
 &\lesssim \sum_{\elll_2^d\in \Z^{d-1}}\bigg(\sum_{\j\in \Z_+^d} 2^{q(\a|\j|_1+\b|\j|_\infty)}\|\eta_{\j_2^d}^Q(\d_{j_1,j_2+\ell_2,\dots,j_d+\ell_d}(f))\|_{A_q(\T^d)}^q\bigg)^{1/q}.
\end{split}
\end{equation*}
Then, repeating the above procedure for the parameters $\ell_2,\dots,\ell_d$, we prove~\eqref{pr1.2} by Lemma~\ref{le1}.

\medskip

\emph{Step 2.} Let us prove representation~\eqref{pr1.1}. Applying Lemma~\ref{leB} (here without loss of generality, we can assume that $\min(\w\a+\b,\w\a)>0$), H\"older's inequality, and~\eqref{pr1.2}, we obtain
\begin{equation}\label{f12}
  \begin{split}
    &\sum_{\k\in\Z_+^d}\|\eta_\k^Q(f)\|_{A_q^{\w\a,\b}(\T^d)}\\
    &\lesssim \sum_{\k\in\Z_+^d}2^{\w\a|\k|_1+\b|\k|_\infty}\|\eta_\k^Q(f)\|_{A_q(\T^d)}\\
    &=\sum_{\k\in\Z_+^d}2^{-(\a-\w\a)|\k|_1}\cdot 2^{\a|\k|_1+\b|\k|_\infty}\|\eta_\k^Q(f)\|_{A_q(\T^d)}\\
    &\le \(\sum_{\k\in\Z_+^d}2^{-q'(\a-\w\a)|\k|_1}\)^{1/q'}
    \(\sum_{\k\in\Z_+^d}2^{q(\a|\k|_1+\b|\k|_\infty)}\|\eta_\k^Q(f)\|_{A_q(\T^d)}^q\)^{1/q}\\
    &\lesssim \|f\|_{A_q^{\a,\b}(\T^d)}.
  \end{split}
\end{equation}
Therefore, $\sum_{\k\in\Z_+^d}\eta_\k^Q(f)$ converges unconditionally in $A_q^{\w\a,\b}(\T^d)$.

Now we show that for any trigonometric polynomial $g$,
\begin{equation}\label{f13}
  g=\sum_{\k\in \Z_+^d}\eta_\k^Q(g).
\end{equation}
It is clear that it suffices to verify~\eqref{f13} for $t_\j(\x)=e^{{\rm i}(\j,\x)}$ with arbitrary $\j\in \Z^d$.
By the triangle inequality, we have
\begin{equation}\label{f14}
  \begin{split}
     &\bigg\|t_\j-\sum_{\k\in \Z_+^d}\eta_\k^Q(t_\j)\bigg\|_{A_q^{\w\a,\b}(\T^d)}\\
     &\le \bigg\|t_\j-\sum_{\k\in \Z_+^d,\, |\k|_\infty\le m}\eta_\k^Q(t_\j)\bigg\|_{A_q^{\w\a,\b}(\T^d)}+\sum_{\k\in \Z_+^d,\, |\k|_\infty> m}\|\eta_\k^Q(t_\j)\|_{A_q^{\w\a,\b}(\T^d)}\\
     &:=I_1(m)+I_2(m).
  \end{split}
\end{equation}
We obviously have that
\begin{equation}\label{f15}
  I_2(m)=0\quad\text{for $m$ large enough}.
\end{equation}
%Thus, to prove~\eqref{f13} it is enough to show that $I_1(m)\to 0$ as $m\to\infty$ too.
%Consider $I_1(m)$.
Since
$$
\sum_{k_i=0}^m(Q_{k_i}^i-Q_{k_i-1}^i)=Q_m^i,\quad i=1,\dots,m,
$$
we obtain
$$
\sum_{\k\in \Z_+^d,\, |\k|_\infty\le m} \eta_\k^Q =\prod_{i=1}^d Q_m^i.
$$
Thus, for any $\j \in (-2^{m-1},2^{m-1})^d\cap \Z^d$, we have
$$
\sum_{\k\in \Z_+^d,\, |\k|_\infty\le m} \eta_\k^Q(t_\j)(\x)=\prod_{i=1}^d \h{\vp_m}(j_i){\h{\w\vp_m}}(j_i)e^{{\rm i}(\j,\x)}.
$$
Using this and conditions \eqref{c1}, \eqref{c2}, \eqref{c3}, we find
\begin{equation}\label{f16}
  \begin{split}
    I_1(m)&=\bigg|1-\prod_{i=1}^d \h{\vp_m}(j_i){\h{\w\vp_m}}(j_i)\bigg|\\
  &=\bigg|1-\h{\vp_m}(j_1){\h{\w\vp_m}}(j_1)+\sum_{\nu=2}^d \prod_{i=1}^{\nu-1} \h{\vp_m}(j_i){\h{\w\vp_m}}(j_i) \(1-\h{\vp_m}(j_\nu){\h{\w\vp_m}}(j_\nu)\)\bigg|\\
  &\lesssim \sum_{\nu=1}^d \Big|1-\h{\vp_m}(j_\nu){\h{\w\vp_m}}(j_\nu)\Big|\lesssim 2^{-ms}\sum_{\nu=1}^d|j_\nu|^s\to 0\quad \text{as}\quad m\to \infty.
  \end{split}
\end{equation}
Therefore, combining~\eqref{f14}, \eqref{f15}, and~\eqref{f16}, we arrive at~\eqref{f13}.

The rest of the proof is quite standard. Denote $F:=\sum_{\k\in\Z_+^d}\eta_\k^Q(f)$. Using~\eqref{f13}, we have for every trigonometric polynomial $g$ that
\begin{equation*}%\label{f17}
  F-g=\sum_{\k\in \Z_+^d}\eta_\k^Q(f-g)
\end{equation*}
with convergence in $A_q^{\w\a,\b}(\T^d)$. Hence, by~\eqref{f12}, we derive
\begin{equation*}%\label{f18}
\begin{split}
  \|F-f\|_{A_q^{\w\a,\b}(\T^d)}&\le \|F-g\|_{A_q^{\w\a,\b}(\T^d)}+\|g-f\|_{A_q^{\w\a,\b}(\T^d)}\lesssim \|f-g\|_{A_q^{\a,\b}(\T^d)}.
\end{split}
\end{equation*}
Choosing $g$ close enough to $f$ yields $\|F-f\|_{A_q^{\w\a,\b}(\T^d)}<\e$ for all $\e>0$ and hence $\|F-f\|_{A_q^{\w\a,\b}(\T^d)}=0$, which implies~\eqref{pr1.1}.

By the same scheme, using Lemma~\ref{leKKS} $(ii)$, the proof of the proposition under condition~$(ii)$ also follows.
\end{proof}

We will also need the following modification of inequality~\eqref{pr1.2}.

\begin{lemma}\label{le2}
  Let $f\in A_q(\T^d)$, $1\le q<p\le \infty$, and $1\le \t\le \infty$. Under conditions of Proposition~\ref{pr1},
  there exists a constant $C=C(\a,\b,q,\t,d)>0$ such that
    \begin{equation}\label{le2.1}
    \begin{split}
           \bigg(\sum_{\j\in \Z_+^d} 2^{\t(\a|\j|_1+\b|\j|_\infty)}&\|\eta_\j^Q(f)\|_{A_q(\T^d)}^\t\bigg)^{1/\t}\\
           &\le C\bigg(\sum_{\j\in \Z_+^d} 2^{\t((\a+\frac1q-\frac1p)|\j|_1+\b|\j|_\infty)}\|\d_\j(f)\|_{A_p(\T^d)}^\t\bigg)^{1/\t}
    \end{split}
  \end{equation}
whenever the sum in the right-hand side is finite.
\end{lemma}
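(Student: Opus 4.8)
The plan is to derive~\eqref{le2.1} from Proposition~\ref{pr1} together with an elementary Nikolskii-type inequality for dyadic blocks. Since the left-hand side of~\eqref{le2.1} already carries the $A_q(\T^d)$-norm, I would leave it untouched and instead produce the gain $2^{(\frac1q-\frac1p)|\j|_1}$ on the right. The first ingredient I would record is that, for any $\k\in\Z_+^d$ and any $g$ with $\h g$ supported in $\mathcal{P}_\k$,
$$
\|g\|_{A_q(\T^d)}\lesssim 2^{(\frac1q-\frac1p)|\k|_1}\|g\|_{A_p(\T^d)},\qquad 1\le q<p\le\infty.
$$
This is immediate from H\"older's inequality on the finite index set $\mathcal{P}_\k$, whose cardinality satisfies ${\rm card}\,\mathcal{P}_\k\asymp\prod_{i=1}^d 2^{k_i}=2^{|\k|_1}$: indeed $\|a\|_{\ell^q(\mathcal{P}_\k)}\le ({\rm card}\,\mathcal{P}_\k)^{\frac1q-\frac1p}\|a\|_{\ell^p(\mathcal{P}_\k)}$, with the usual convention $\frac1p=0$ when $p=\infty$. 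Applied to $g=\d_\k(f)$ this gives $\|\d_\k(f)\|_{A_q(\T^d)}\lesssim 2^{(\frac1q-\frac1p)|\k|_1}\|\d_\k(f)\|_{A_p(\T^d)}$.

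Second, I would observe that the whole of Step~1 in the proof of Proposition~\ref{pr1} goes through verbatim with the outer exponent $q$ replaced by an arbitrary $\t\in[1,\infty]$ in the weighted $\ell$-norms, yielding
$$
\bigg(\sum_{\j\in\Z_+^d}2^{\t(\a|\j|_1+\b|\j|_\infty)}\|\eta_\j^Q(f)\|_{A_q(\T^d)}^\t\bigg)^{1/\t}\lesssim \bigg(\sum_{\k\in\Z_+^d}2^{\t(\a|\k|_1+\b|\k|_\infty)}\|\d_\k(f)\|_{A_q(\T^d)}^\t\bigg)^{1/\t}.
$$
The reason is that every estimate in that proof is applied blockwise: the decomposition~\eqref{f2}, the triangle inequality~\eqref{f3}, the Bernstein inequality (Lemma~\ref{leB}), Lemma~\ref{lemon}, and Lemma~\ref{leKKS} all act on the individual blocks $\d_{\j+\elll}(f)$ and never see the outer summation exponent. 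The exponent enters only through Minkowski's inequality at~\eqref{f4}, valid for every $\t\ge1$, and through the geometric summations $\sum_{\ell_1}2^{(s-\max(\a+\b,\a))\ell_1}<\infty$ and $\sum_{\ell_1}2^{-(\min(\a,\a+\b)-\zeta)\ell_1}<\infty$, both of which converge independently of $\t$. The only modification is that one stops before invoking Lemma~\ref{le1} (which would specialize to $\t=q$), keeping the right-hand side in summed form; the endpoint $\t=\infty$ is handled exactly as in Proposition~\ref{pr1}. Both conditions $(i)$ and $(ii)$ are inherited unchanged.

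Finally, I would insert the Nikolskii estimate into each term of the right-hand side of the second display, replacing $2^{\t\a|\k|_1}\|\d_\k(f)\|_{A_q(\T^d)}^\t$ by $2^{\t(\a+\frac1q-\frac1p)|\k|_1}\|\d_\k(f)\|_{A_p(\T^d)}^\t$, which produces exactly~\eqref{le2.1}. The assumption that the right-hand side of~\eqref{le2.1} be finite guarantees that all the sums met along the way converge, so the termwise manipulations are legitimate. The hard part is not analytic but bookkeeping: one must check carefully that no step in Proposition~\ref{pr1} secretly used $\t=q$, so that the passage to a general outer exponent is genuinely free; granting this, the only new input beyond Proposition~\ref{pr1} is the elementary H\"older-based Nikolskii inequality above.
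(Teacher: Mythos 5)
Your proposal is correct and follows essentially the same route as the paper: the paper's proof likewise consists of (a) rerunning Step~1 of Proposition~\ref{pr1} with the outer exponent $\t$ in place of $q$, stopping before Lemma~\ref{le1}, to get $\big(\sum_{\j} 2^{\t(\a|\j|_1+\b|\j|_\infty)}\|\eta_\j^Q(f)\|_{A_q(\T^d)}^\t\big)^{1/\t}\lesssim\big(\sum_{\j} 2^{\t(\a|\j|_1+\b|\j|_\infty)}\|\d_\j(f)\|_{A_q(\T^d)}^\t\big)^{1/\t}$, and (b) the H\"older-based inequality $\|\d_\j(f)\|_{A_q(\T^d)}\lesssim 2^{(\frac1q-\frac1p)|\j|_1}\|\d_\j(f)\|_{A_p(\T^d)}$ using $\spec\d_\j(f)\subset\mathcal{P}_\j$. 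Your explicit justification that the Step~1 argument is blockwise and therefore indifferent to the outer exponent is exactly what the paper leaves implicit in the phrase ``repeating the same procedure.''
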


\begin{proof}
  First, repeating the same procedure as in the proof of Step~1 of Proposition~\ref{pr1}, we obtain
  \begin{equation*}%\label{le2.2}
               \bigg(\sum_{\j\in \Z_+^d} 2^{\t(\a|\j|_1+\b|\j|_\infty)}\|\eta_\j^Q(f)\|_{A_q(\T^d)}^\t\bigg)^{1/\t}\\
           \lesssim\bigg(\sum_{\j\in \Z_+^d} 2^{\t(\a|\j|_1+\b|\j|_\infty)}\|\d_\j(f)\|_{A_q(\T^d)}^\t\bigg)^{1/\t}.
  \end{equation*}
Then, applying the inequality
  \begin{equation*}%\label{th1.6}
  \|\d_\j(f)\|_{A_q(\T^d)}\lesssim 2^{(\frac1q-\frac1p)|\j|_1}\|\d_\j(f)\|_{A_p(\T^d)},
\end{equation*}
which easily follows from H\"older's inequality and the fact that $\spec \d_\j(f)\subset P_{j_1}\times\dots\times P_{j_d}$,
$P_j=\{\ell \in \Z\,:\, 2^{j-1}\le |\ell|<2^j\}$, we arrive at~\eqref{le2.1}.
\end{proof}

%\begin{proposition}\label{pr1+}
%  Let $\a>0$, $\b\in \R$, $1\le q\le \infty$, and $\min(\a+\b,\a)>0$. Suppose that $Q=(Q_j(\cdot,\vp_j,\w\vp_j))_{j\in \Z_+}$, where the sequences $(\vp_j)_{j\in \Z_+}$ and $(\w\vp_j)_{j\in \Z_+}$ such that $\vp_j\in \mathcal{T}_j^1$, $j\in \Z_+$,  $\sup_{j\in\Z_+}\|\w\vp_j\|_{A_{q'}(\T)}<\infty$, and conditions \eqref{c1}, \eqref{c2}, \eqref{c3} are satisfied with parameters $N=0$ and $s>\max(\a+\b,\a)$. Then every function $f\in A_q^{\a,\b}(\T^d)$ can be represented by the series
%  \begin{equation}\label{pr1.1+}
%    f=\sum_{\j\in \Z_+^d} \eta_\j^Q(f)
%  \end{equation}
%  converging unconditionally in ??? $A_q^{\w\a,\b}(\T^d)$ with $\w\a<\a$ ??? and satisfying the condition
%  \begin{equation}\label{pr1.2+}
%    \bigg(\sum_{\j\in \Z_+^d} 2^{q(\a|\j|_1+\b|\j|_\infty)}\|\eta_\j^Q(f)\|_{A_q(\T^d)}^q\bigg)^{1/q}\lesssim \|f\|_{A_q^{\a,\b}(\T^d)}.
%  \end{equation}
%\end{proposition}

A reverse statement to Proposition~\ref{pr1} is written as follows.

\begin{proposition}\label{pr2}
Let $\a>0$, $\b\in \R$, $\a+\b>0$, $1\le q\le \infty$, and let $(f_\j)_{\j\in \Z_+^d}$ be such that $f_\j\in \mathcal{T}_\j^d$ and
\begin{equation*}
  \(\sum_{\j\in\Z_+^d}2^{q(\a|\j|_1+\b|\j|_\infty)}\|f_\j\|_{A_q(\T^d)}^q\)^{1/q}<\infty.
\end{equation*}
Suppose that the series $\sum_{\j\in\Z_+^d}f_\j$ converges to a function $f$ in $A_q(\T^d)$. Then $f\in A_q^{\a,\b}(\T^d)$ and moreover, there is a constant $C=C(\a,\b,q,d)$ such that
\begin{equation*}%\label{pr2.1}
  \|f\|_{A_q^{\a,\b}(\T^d)}\le C\(\sum_{\j\in\Z_+^d} 2^{q(\a|\j|_1+\b|\j|_\infty)}\|f_\j\|_{A_q(\T^d)}^q\)^{1/q}.
\end{equation*}
\end{proposition}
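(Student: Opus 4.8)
The plan is to reduce everything to the Littlewood--Paley description of the $A_q^{\a,\b}$-norm supplied by Lemma~\ref{le1}, and then to exploit the spectral localization of the $f_\j$ together with the geometric separation estimate of Lemma~\ref{lemon}. Since $\a\ge 0$ and $\a+\b\ge\min(\a,\a+\b)>0$, Lemma~\ref{le1} applies and gives
$$
\|f\|_{A_q^{\a,\b}(\T^d)}\asymp\Big(\sum_{\k\in\Z_+^d}2^{q(\a|\k|_1+\b|\k|_\infty)}\|\d_\k(f)\|_{A_q(\T^d)}^q\Big)^{1/q},
$$
so it suffices to bound this quantity by the right-hand side of the asserted inequality (with the obvious modification when $q=\infty$).

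First I would pin down the interaction between the dyadic blocks $\d_\k$ and the building blocks $f_\j$. Because $f_\j\in\mathcal{T}_\j^d$ has spectrum in $D_{j_1}\times\dots\times D_{j_d}$, while $\d_\k$ extracts the frequencies lying in $\mathcal{P}_\k=P_{k_1}\times\dots\times P_{k_d}$, and since $D_{j_i}$ reaches only $|\ell|\le 2^{j_i-1}$ whereas $P_{k_i}$ requires $|\ell|\ge 2^{k_i-1}$, one has $D_{j_i}\cap P_{k_i}=\emptyset$ whenever $j_i<k_i$. Hence $\d_\k(f_\j)=0$ unless $\j\ge\k$ componentwise. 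As $\d_\k$ is a Fourier multiplier of norm at most $1$ on $A_q(\T^d)$ and $\sum_\j f_\j=f$ converges in $A_q(\T^d)$, it follows that $\d_\k(f)=\sum_{\j\ge\k}\d_\k(f_\j)$ with convergence in $A_q(\T^d)$; since passing to a subset of frequencies does not increase the $A_q$-norm, we get $\|\d_\k(f)\|_{A_q(\T^d)}\le\sum_{\j\ge\k}\|f_\j\|_{A_q(\T^d)}$.

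Next I would insert the weights. Writing $\psi(\k)=\a|\k|_1+\b|\k|_\infty$ and $\e=\min(\a,\a+\b)>0$, Lemma~\ref{lemon} gives $\psi(\k)\le\psi(\j)-\e|\j-\k|_1$ for all $\j\ge\k$, so that, with $a_\j:=2^{\psi(\j)}\|f_\j\|_{A_q(\T^d)}$ (and $a_\j:=0$ for $\j\notin\Z_+^d$) and the substitution $\elll=\j-\k\ge 0$,
$$
2^{\psi(\k)}\|\d_\k(f)\|_{A_q(\T^d)}\le\sum_{\j\ge\k}2^{-\e|\j-\k|_1}a_\j=\sum_{\elll\ge 0}2^{-\e|\elll|_1}a_{\k+\elll}.
$$
Finally I would sum in $\ell^q$ over $\k$ and apply the triangle inequality (Minkowski's inequality in $\ell^q$, valid for all $1\le q\le\infty$):
\begin{equation*}
\Big(\sum_{\k\in\Z_+^d}\big(2^{\psi(\k)}\|\d_\k(f)\|_{A_q(\T^d)}\big)^q\Big)^{1/q}\le\sum_{\elll\ge 0}2^{-\e|\elll|_1}\Big(\sum_{\k}a_{\k+\elll}^q\Big)^{1/q}\le\Big(\sum_{\elll\ge 0}2^{-\e|\elll|_1}\Big)\Big(\sum_{\j}a_\j^q\Big)^{1/q}.
\end{equation*}
Since $\sum_{\elll\ge 0}2^{-\e|\elll|_1}=(1-2^{-\e})^{-d}<\infty$, the right-hand side equals $C\big(\sum_\j 2^{q(\a|\j|_1+\b|\j|_\infty)}\|f_\j\|_{A_q(\T^d)}^q\big)^{1/q}$, which establishes both $f\in A_q^{\a,\b}(\T^d)$ and the stated norm bound. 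The only genuinely delicate point is the exact spectral localization $\d_\k(f_\j)=0$ for $\j\not\ge\k$, which must be verified with care because of the half-open definition $D_j=[-2^{j-1},2^{j-1})$ (in particular the contribution at $j_i=k_i$ is admissible and consistent with $\j\ge\k$); once this is settled, the weighted summation is routine via Lemma~\ref{lemon} and Minkowski's inequality.
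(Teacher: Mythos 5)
Your proposal is correct and follows essentially the same route as the paper's proof: both rest on the spectral localization $\d_\k(f_\j)=0$ unless $\j\ge\k$ componentwise, the triangle inequality for $\|\d_\k(f)\|_{A_q(\T^d)}$, the weight-separation estimate of Lemma~\ref{lemon}, Minkowski's inequality in $\ell^q$, and finally Lemma~\ref{le1} to identify the resulting quantity with the $A_q^{\a,\b}$-norm. The paper merely writes the same argument with the shifted index $f_{\elll+\j}$ (summing over $\j\in\Z_+^d$) instead of your substitution $\elll=\j-\k$, so the two proofs coincide up to relabeling.
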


\begin{proof}
  The proposition can be proved repeating step by step the proof of Proposition~3.4 in~\cite{BDSU16}. For completeness we present a detailed proof.

  For $\elll\in\Z_+^d$, we write $f$ as the series
  $$
   f=\sum_{\j\in\Z^d} f_{\elll+\j}
  $$
  with $f_{\elll+\j}:=0$ for $\j+\elll\in\Z^d\setminus\Z_+^d$. Using the triangle inequality and taking into account that $\d_\elll(f_{\elll+\j})=0$ for $\j\not\in \Z_+^d$, we obtain
  \begin{equation*}%\label{f19}
  \begin{split}
         \|\d_\elll(f)\|_{A_q(\T^d)}&=\bigg\|\sum_{\j\in\Z_+^d}\d_\elll(f_{\elll+\j})\bigg\|_{A_q(\T^d)}\\
         &\le \sum_{\j\in\Z_+^d}\|\d_\elll(f_{\elll+\j})\|_{A_q(\T^d)}\le \sum_{\j\in\Z_+^d}\|f_{\elll+\j}\|_{A_q(\T^d)}.
  \end{split}
  \end{equation*}
This inequality together with Lemma~\ref{lemon} yields
\begin{equation*}%\label{f20}
  2^{\a|\elll|_1+\b|\elll|_\infty}\|\d_\elll(f)\|_{A_q(\T^d)}\lesssim \sum_{\j\in\Z_+^d} 2^{-\min\{\a,\a+\b\}|\j|_1}\cdot 2^{\a|\elll+\j|_1+\b|\elll+\j|_\infty}\|f_{\elll+\j}\|_{A_q(\T^d)}.
\end{equation*}
%Next, applying $\ell_q(\Z_+^d)$-norm to the both sides of the above inequality
Then, by Minkowski's inequality, we obtain
\begin{equation*}%\label{f21}
  \begin{split}
      &\(\sum_{\elll\in\Z_+^d}2^{q(\a|\elll|_1+\b|\elll|_\infty)}\|\d_\elll(f)\|_{A_q(\T^d)}^q\)^{1/q}\\
      &\lesssim \sum_{\j\in\Z_+^d} 2^{-\min\{\a,\a+\b\}|\j|_1} \(\sum_{\elll\in\Z_+^d}2^{q(\a|\elll+\j|_1+\b|\elll+\j|_\infty)}\|f_{\elll+\j}\|_{A_q(\T^d)}^q\)^{1/q}\\
      &\lesssim \(\sum_{\elll\in\Z_+^d}2^{q(\a|\elll|_1+\b|\elll|_\infty)}\|f_{\elll}\|_{A_q(\T^d)}^q\)^{1/q}.
  \end{split}
\end{equation*}
%It remains to note that by Lemma~\ref{le1}, the left hand side of~\eqref{f21} coincides with $\|f\|_{A_q^{\a,\b}(\T^d)}$.
Thus, Lemma~\ref{le1} concludes the proof.
\end{proof}

%Combining Proposition~\ref{pr1} and Proposition~\ref{pr2} with $f_\j=\eta_\j^Q(f)$, we obtain the following useful tool of a representation of a function $f\in A_q^{\a,\b}(\T^d)$ by the series $f=\sum_{\j\in\Z_+^d}\eta_j^Q(f)$ with the equivalent norm
%$$
%\|f\|_{A_q^{\a,\b}(\T^d)}^+:=\bigg(\sum_{\j\in \Z_+^d} 2^{q(\a|\j|_1+\b|\j|_\infty)}\|\eta_\j^Q(f)\|_{A_q(\T^d)}^q\bigg)^{1/q}.
%$$
%%with the usual modification in the case $q=\infty$.

Propositions~\ref{pr1} and~\ref{pr2} suggest the following useful necessary and sufficient conditions for $f\in A_q^{\a,\b}(\T^d)$ to be  represented as $f=\sum_{\j\in\Z_+^d}\eta_j^Q(f)$. This generalizes Theorem~3.6 in~\cite{BDSU16}.
% with the equivalent norm
%$$
%\|f\|_{A_q^{\a,\b}(\T^d)}^+:=\bigg(\sum_{\j\in \Z_+^d} 2^{q(\a|\j|_1+\b|\j|_\infty)}\|\eta_\j^Q(f)\|_{A_q(\T^d)}^q\bigg)^{1/q}.
%$$

\begin{theorem}\label{th0}
    Let $1\le q\le \infty$, $\a>0$, $\b\in \R$, $\a+\b>0$, and let $Q=(Q_j(\cdot,\vp_j,\w\vp_j))_{j\in \Z_+}$, where $(\vp_j)_{j\in \Z_+}$ and $(\w\vp_j)_{j\in \Z_+}$ be such that $\w\vp_j\in \mathcal{D}'(\T)$ and $\vp_j\in \mathcal{T}_j^1$ for each $j\in \Z_+$. Suppose conditions \eqref{c1}, \eqref{c2}, and \eqref{c3} are satisfied with parameters $N\ge 0$ and $s>\max(\a+\b,\a)$.  Assume also that
  \begin{itemize}
    \item[$(i)$] $\min(\a+\b,\a)>N+1/q'$

    or

    \item[$(ii)$] $N=0$ and $\sup_{j\in\Z_+}\|\w\vp_j\|_{\w A_{q',j}(\T)}<\infty$.
  \end{itemize}
  Then a function $f$  belongs to $A_q^{\a,\b}(\T^d)$ if and only if it can be represented by the series~\eqref{pr1.1} converging unconditionally in $A_q^{\w\a,\b}(\T^d)$ with $\w\a<\a$ and satisfying $\sum_{\j\in \Z_+^d} 2^{q(\a|\j|_1+\b|\j|_\infty)}\|\eta_\j^Q(f)\|_{A_q(\T^d)}^q<\infty$. Moreover, the norm $\|f\|_{A_q^{\a,\b}(\T^d)}$ is equivalent to the norm %$\|f\|_{A_q^{\a,\b}(\T^d)}^+$.
$$
\|f\|_{A_q^{\a,\b}(\T^d)}^+:=\bigg(\sum_{\j\in \Z_+^d} 2^{q(\a|\j|_1+\b|\j|_\infty)}\|\eta_\j^Q(f)\|_{A_q(\T^d)}^q\bigg)^{1/q}.
$$
\end{theorem}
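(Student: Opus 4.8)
The plan is to obtain Theorem~\ref{th0} as a direct synthesis of the two preceding Littlewood--Paley characterizations, Propositions~\ref{pr1} and~\ref{pr2}, which between them supply the two implications and the matching one-sided norm bounds. The only genuine work is to glue them together and to reconcile the modes of convergence.

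First I would record the structural fact that each block $\eta_\j^Q(f)$ is admissible as an input to Proposition~\ref{pr2}, namely that $\eta_\j^Q(f)\in\mathcal{T}_\j^d$. This is immediate: since $\vp_j\in\mathcal{T}_j^1$, the operator $Q_j(\cdot,\vp_j,\w\vp_j)$ has spectrum contained in $D_j$, and because $D_{j-1}\subset D_j$ the one-dimensional difference $Q_{j_i}^i-Q_{j_i-1}^i$ still produces spectrum in $D_{j_i}$; taking the tensor product over $i=1,\dots,d$ gives $\eta_\j^Q(f)\in\mathcal{T}_\j^d$. Thus the family $(\eta_\j^Q(f))_{\j\in\Z_+^d}$ has precisely the form required downstream.

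For the necessity direction and the upper bound $\|f\|_{A_q^{\a,\b}(\T^d)}^+\lesssim\|f\|_{A_q^{\a,\b}(\T^d)}$, I would simply invoke Proposition~\ref{pr1}: under the stated hypotheses it yields both the representation~\eqref{pr1.1}, with unconditional convergence in $A_q^{\w\a,\b}(\T^d)$ for every $0\le\w\a<\a$, and the estimate~\eqref{pr1.2}, which is exactly the claimed inequality. For the converse and the lower bound I would start from a function $f$ presented by the series~\eqref{pr1.1} with the summability $\sum_\j 2^{q(\a|\j|_1+\b|\j|_\infty)}\|\eta_\j^Q(f)\|_{A_q(\T^d)}^q<\infty$, set $f_\j:=\eta_\j^Q(f)$, and apply Proposition~\ref{pr2} to conclude that $f\in A_q^{\a,\b}(\T^d)$ together with $\|f\|_{A_q^{\a,\b}(\T^d)}\lesssim\|f\|_{A_q^{\a,\b}(\T^d)}^+$. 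Combining the two bounds yields the asserted equivalence of $\|\cdot\|_{A_q^{\a,\b}(\T^d)}$ and $\|\cdot\|^+_{A_q^{\a,\b}(\T^d)}$.

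The one point requiring care --- and the main obstacle --- is matching the mode of convergence: Proposition~\ref{pr2} demands that $\sum_\j f_\j$ converge to $f$ in $A_q(\T^d)$, whereas the hypothesis only provides unconditional convergence in $A_q^{\w\a,\b}(\T^d)$. I would resolve this through the embedding $A_q^{\w\a,\b}(\T^d)\hookrightarrow A_q(\T^d)$ for a well-chosen $\w\a$. Using $1+|\k|\asymp 1+|\k|_\infty\le\prod_{i}(1+|k_i|)$, the weight $\prod_i(1+|k_i|)^{\w\a}(1+|\k|)^\b$ is bounded below by a positive constant once $\w\a\ge 0$ and $\w\a+\b\ge 0$; since $\a+\b>0$ we may select $\w\a\in[\max(0,-\b),\a)$, a nonempty range. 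Moreover the summability hypothesis already forces the series to converge absolutely, hence unconditionally, in $A_q^{\w\a,\b}(\T^d)$ for every such $\w\a$ --- this is exactly the estimate carried out in~\eqref{f12} --- so its limit coincides with $f$ and, by the embedding, the convergence takes place in $A_q(\T^d)$ as well. Proposition~\ref{pr2} then applies verbatim, and everything else is routine bookkeeping.
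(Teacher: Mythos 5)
Your proposal is correct and follows essentially the same route as the paper: Theorem~\ref{th0} is stated there as a direct consequence of Propositions~\ref{pr1} and~\ref{pr2} with no separate proof given, and your synthesis is exactly that intended argument. Your additional care in reconciling the modes of convergence---checking $\eta_\j^Q(f)\in\mathcal{T}_\j^d$, choosing $\w\a\in[\max(0,-\b),\a)$ so that $A_q^{\w\a,\b}(\T^d)\hookrightarrow A_q(\T^d)$, and using the absolute-convergence estimate~\eqref{f12} to identify the limit---correctly fills in the details the paper leaves implicit.
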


%??? $\a+\b=0$ ???
%
%??? theorem ????

\section{Error estimates}

%\subsection{??? Energy-norm based sparse grids (Griebel)}
In this section, we obtain estimates for the error of approximation by quasi-interpolation operators
$$
P_{n,T}^Q=\sum_{\j\in \D(n,T)}\eta_\j^Q, \quad n\in \N,\quad T<1,
$$
where
\begin{equation*}%\label{se}
 \D(n,T)=\{\k\in \Z_+^d\,:\, |\k|_1-T|\k|_\infty\le (1-T)n\}.
\end{equation*}
%\D(\xi;\a,\b,\g,p,q,\e)
In what follows, we distinguish between approximation of a function $f\in A_p^{\a,\b}(\T^d)$ in the isotropic space $A_q^\g(\T^d)$ (Theorem~\ref{th1}) and in the mixed space $A_{q,\mix}^\g(\T^d)$ (Theorem~\ref{th1+}) since we use slightly different ingredients in the corresponding proofs.

Recall that
$$
\s_{p,q}=\(\frac1q-\frac1p\)_+.
$$

\subsection{Error estimates in $A_{q}^\g(\T^d)$}

\begin{theorem}\label{th1}
  Let $1\le p,q\le\infty$, $\a>\s_{p,q}$, $\b\in \R$, $\g\ge 0$,   $\g-\b<\a-\s_{p,q}$, and let $Q=(Q_j(\cdot,\vp_j,\w\vp_j))_{j\in \Z_+}$, where $\w\vp_j\in \mathcal{D}'(\T)$ and $\vp_j\in \mathcal{T}_j^1$ for each $j\in \Z_+$. Suppose conditions \eqref{c1}, \eqref{c2}, and \eqref{c3} are satisfied with parameters $N\ge 0$ and $s>\max(\a+\b,\a)$. Assume also that
  \begin{itemize}
    \item[$(i)$] $\min(\a+\b,\a)>N+1/p'$

    or

    \item[$(ii)$] $N=0$ and $\sup_{j\in\Z_+}\|\w\vp_j\|_{\w A_{q',j}(\T)}<\infty$.
  \end{itemize}
  Then,  for all $f\in A_p^{\a,\b}(\T^d)$ and $n\in \N$, we have
  \begin{equation}\label{th1.1}
\begin{split}
       \|f-P_{n,T}^Q f\|_{A_q^\g(\T^d)}\le C\Omega(n)\|f\|_{A_p^{\a,\b}(\T^d)},
\end{split}
  \end{equation}
where
\begin{equation*}
  \begin{split}
      \Omega(n)%&=\Omega_{\a,\b,\g, p,q,T,d,N}(n)\\
=\left\{
                                                                         \begin{array}{ll}
                                                                            \displaystyle 2^{-\(\a+\b-\g-\s_{p,q}-\((\a-\s_{p,q})T-(\g-\b)\)\frac{d-1}{d-T}\)n}n^{(d-1)(1-\frac1p)}, & \hbox{$T\ge\frac{\g-\b}{\a-\s_{p,q}}$,} \\
                                                                           \displaystyle
2^{-(\a+\b-\g-\s_{p,q})n}, & \hbox{$T<\frac{\g-\b}{\a-\s_{p,q}}$,}
                                                                          \end{array}
                                                                        \right.
   \end{split}
\end{equation*}
and the constant $C$ does not depend on $f$ and $n$.
\end{theorem}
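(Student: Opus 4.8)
The plan is to decompose the approximation error using the Littlewood--Paley-type machinery developed in Section~4. Since $f\in A_p^{\a,\b}(\T^d)$, Proposition~\ref{pr1} (applicable because the hypotheses here match those of Proposition~\ref{pr1} with $p$ in place of $q$) gives the unconditionally convergent representation $f=\sum_{\j\in\Z_+^d}\eta_\j^Q(f)$, so that
\begin{equation*}
  f-P_{n,T}^Q f=\sum_{\j\notin\D(n,T)}\eta_\j^Q(f).
\end{equation*}
The idea is to estimate the $A_q^\g(\T^d)$-norm of this tail sum via Proposition~\ref{pr2}, which I would apply with the pair of parameters $(\g,0)$ (i.e.\ measuring the isotropic smoothness $\g$). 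This reduces the problem to bounding
\begin{equation*}
  \bigg(\sum_{\j\notin\D(n,T)}2^{q\g|\j|_\infty}\|\eta_\j^Q(f)\|_{A_q(\T^d)}^q\bigg)^{1/q},
\end{equation*}
where I have used $\g|\j|_1=\g|\j|_\infty\cdot 1$ in the mixed-norm template only loosely; more precisely, since $A_q^\g=A_q^{0,\g}$ in the isotropic sense, the relevant weight is $2^{\g|\j|_\infty}$.

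Next I would convert the $A_q$-norms of the blocks $\eta_\j^Q(f)$ into quantities controlled by the $A_p^{\a,\b}$-norm of $f$. The key tool is Lemma~\ref{le2}, which (under the conditions of Proposition~\ref{pr1}) allows me to pass from $A_q$ to $A_p$ at the cost of the factor $2^{(\frac1q-\frac1p)_+|\j|_1}=2^{\s_{p,q}|\j|_1}$, controlling the left side by
\begin{equation*}
  \bigg(\sum_{\j\notin\D(n,T)}2^{q\big((\a+\frac1q-\frac1p)|\j|_1+\b|\j|_\infty\big)}\|\d_\j(f)\|_{A_p(\T^d)}^q\bigg)^{1/q},
\end{equation*}
and then a H\"older step in the $\j$-summation splitting off the full $A_p^{\a,\b}$-weight $2^{\a|\j|_1+\b|\j|_\infty}$. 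After factoring out $\|f\|_{A_p^{\a,\b}(\T^d)}$ via H\"older (with exponents $q$ and the conjugate index, using $\|f\|_{A_p^{\a,\b}}=(\sum_\j 2^{p(\a|\j|_1+\b|\j|_\infty)}\|\d_\j(f)\|_{A_p}^p)^{1/p}$ from Lemma~\ref{le1}), what remains is a purely geometric sum over the complement of $\D(n,T)$ of the form
\begin{equation*}
  \sum_{\j\notin\D(n,T)}2^{-t|\j|_1+r|\j|_\infty},
\end{equation*}
with the effective exponents $t=\a-\s_{p,q}$ and $r=\g-\b$ (the reductions $\a\mapsto\a-\s_{p,q}$ and the shift by $\b$ coming from the preceding steps). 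This is exactly the sum estimated in Lemma~\ref{lek}, whose two regimes $T\ge r/t$ and $T<r/t$ reproduce precisely the two cases of $\Omega(n)$, with the threshold $T=\frac{\g-\b}{\a-\s_{p,q}}$ and the logarithmic factor $n^{(d-1)(1-1/p)}$ emerging from the critical case.

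The main obstacle I anticipate is the careful bookkeeping of the H\"older exponents so that the factor $n^{(d-1)(1-1/p)}$ comes out correctly: Lemma~\ref{lek} is stated for the plain $\ell_1$-sum (exponent $1$) producing $n^{d-1}$, but here the tail sum appears raised to the $1/q$ power after a H\"older split that trades off against the $A_p$-structure, so the power of $n$ must be tracked through the interplay of $q$, $p$, and the number $(d-1)$ of ``free'' indices on the critical boundary. A secondary technical point is ensuring that the hypotheses on $\a,\b,\g,s,N$ stated here genuinely imply the hypotheses of Propositions~\ref{pr1}--\ref{pr2} and Lemma~\ref{le2}; in particular one must verify $\g-\b<\a-\s_{p,q}$ guarantees $t>r$ (so Lemma~\ref{lek} applies) and that the endpoint/supremum cases $q=\infty$ (replacing the $\ell_q$-sums by suprema, via~\eqref{lek.2}) are handled by the same scheme with the usual modifications. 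Finally, the $q=\infty$ and $p=\infty$ cases require using the sup-norm versions of the Wiener norms throughout, which I would remark follow by the identical argument.
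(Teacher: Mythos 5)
Your reduction step is where the argument breaks. You propose to bound the tail $\sum_{\j\notin\D(n,T)}\eta_\j^Q(f)$ in $A_q^\g(\T^d)$ by invoking Proposition~\ref{pr2} with the isotropic weight $2^{\g|\j|_\infty}$, i.e.\ with hybrid parameters $(\a,\b)=(0,\g)$. But Proposition~\ref{pr2} requires $\a>0$ and $\a+\b>0$: its proof rests on Lemma~\ref{lemon}, which needs $\min(\a,\a+\b)>0$ to produce the convergent factor $\sum_{\j}2^{-\min(\a,\a+\b)|\j|_1}$, and this fails when the weight has no $|\j|_1$-component. The failure is not merely technical; the inequality you need is false for $q>1$. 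Take $d=2$, fix $m$, and set $f_{(m,j_2)}(\x)=c\,e^{-{\rm i}2^{m-1}x_1}\in\mathcal{T}_{(m,j_2)}^2$ for $j_2=0,\dots,m$ (all these boxes contain the frequency $(-2^{m-1},0)$). Then $\big\|\sum_{\j}f_\j\big\|_{A_q^{0,\g}(\T^2)}\asymp (m+1)\,c\,2^{\g m}$, while $\big(\sum_{\j}2^{q\g|\j|_\infty}\|f_\j\|_{A_q(\T^2)}^q\big)^{1/q}\asymp (m+1)^{1/q}c\,2^{\g m}$, so the ratio grows like $(m+1)^{1/q'}$ and no constant can work. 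The same overlap phenomenon afflicts the actual blocks $\eta_\j^Q(f)$, whose spectra fill the whole box $\mathcal{T}_\j^d$ rather than the dyadic shell $\mathcal{P}_\j$; this is precisely why $\ell_q$-almost-orthogonality of the blocks requires the mixed weight $2^{\a|\j|_1}$ with $\a>0$, and why the paper applies Proposition~\ref{pr2} only in Theorem~\ref{th1+} (target norm $A_{q,\mix}^\g$, parameters $(\g,0)$ with $\g>0$), while Theorem~\ref{th1} allows $\g\ge 0$.

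The paper's proof avoids Proposition~\ref{pr2} entirely. After the same representation $f-P_{n,T}^Qf=\sum_{\j\notin\D(n,T)}\eta_\j^Q(f)$ from Proposition~\ref{pr1}, it uses the plain triangle inequality (an $\ell_1$-sum over the tail, which is always legitimate) together with the Bernstein inequality of Lemma~\ref{leB} to insert the factor $2^{\g|\j|_\infty}$, and then H\"older with exponents $(p',p)$ in the $\j$-summation. For $1<p\le q$ one first replaces $A_q^\g$ by $A_p^\g$ via $\|\cdot\|_{\ell_q}\le\|\cdot\|_{\ell_p}$ and closes the $\ell_p$-factor with Proposition~\ref{pr1} itself; for $q<p$ one keeps the $A_q$-norms of the blocks and closes with Lemma~\ref{le2} and Lemma~\ref{le1}, roughly as you intended. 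This route also settles the bookkeeping you flagged: the geometric factor comes out as $\big(\sum_{\j\notin\D(n,T)}2^{-p'((\a-\s_{p,q})|\j|_1+(\b-\g)|\j|_\infty)}\big)^{1/p'}$, so Lemma~\ref{lek} is applied with exponent $p'$ and yields $n^{(d-1)/p'}=n^{(d-1)(1-\frac1p)}$ directly, with the case $p=1$ handled by the sup-estimate~\eqref{lek.2}. By contrast, H\"older applied to an $\ell_q$-tail (your scheme) would produce the exponent $n^{(d-1)\s_{p,q}}$ of Theorem~\ref{th1+} --- a further indication that your reduction belongs to the mixed-norm setting, not to the isotropic one treated here.
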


\begin{remark}\label{remS1}
$(i)$ In the case $p=q=2$ and $Q=(I_j)_{j\in\Z_+}$, where $I_j$ is the Lagrange interpolation operator defined in~\eqref{I2}, Theorem~\ref{th1} was proved in~\cite{GH20}, see also~\cite{BDSU16} and~\cite{GH14}.  For similar results in the case $p=q=2$, $\g=T=0$, and $Q=(K_j)_{j\in\Z_+}$, where $K_j$ is defined in~\eqref{k}, see~\cite{K21}.

$(ii)$
Under conditions of Theorem~\ref{th1} with $1\le q\le 2$ and $\g=0$, by the Hausdorff-Young inequality, %implies~\cite[Theorem~5.6]{BDSU16} (the case $p=2$) and the main results of~\cite{Ha92} (the case $p=\infty$, which corresponds to the Korobov space).
estimate~\eqref{th1.1} implies that
\begin{equation}\label{qst}
  \|f-P_{n,T}^Q f\|_{L_{q'}(\T^d)}\le C\Omega(n)\|f\|_{A_p^{\a,\b}(\T^d)}.
\end{equation}
We can further extend this result considering a more general Pitt's inequality \cite{t1,t2}
\begin{equation}\label{qst-}
\|f\|_{L_{\xi}^\eta(\T^d)}\lesssim \|f\|_{A_q^{\g}(\T^d)},\qquad 1\le q\le \xi \le\infty,
\end{equation}
where
$\|f\|_{L_{\xi}^\eta(\T^d)}=(\int_{\T^d}|f(x)|^\xi|x|^\eta dx)^{1/\xi}$
under the suitable conditions on $\xi$ and $\eta$.
In particular, using \cite[(5.4)]{t0} we have
\begin{equation}\label{qst--}
  \|f-P_{n,T}^Q f\|_{L_{\xi}(\T^d)}\le C\Omega(n)\|f\|_{A_p^{\a,\b}(\T^d)}
  \end{equation}
for $\xi\ge 2$, $\max(q,q')\le \xi$, and
$\gamma=d (1-\frac1\xi-\frac1q)$.
Taking $\xi=q'$ we see that (\ref{qst--}) coincides with
(\ref{qst}).
% in place of the Hausdorff-Young inequality More generally,

\end{remark}

\begin{proof}[Proof of Theorem~\ref{th1}]
  First, we consider the case $1<p\le q\le \infty$. %We follow the proof of Theorem~4.1 in~\cite{BDSU16}.
  %In what follows, we denote $\D(n,T)=\D(\a,\b,\g,\xi)$.
  Using the estimate  $\|\cdot\|_{\ell_q}\le \|\cdot\|_{\ell_p}$, Proposition~\ref{pr1}, Lemma~\ref{leB}, and H\"older's inequality, we obtain
  \begin{equation}\label{th1.2}
  \begin{split}
          \|f-P_{n,T}^Q f\|_{A_q^\g(\T^d)}&\le \|f-P_{n,T}^Q f\|_{A_p^\g(\T^d)}\\
          &=\bigg\|\sum_{\j\not\in\D(n,T)} \eta_\j^Q(f)\bigg\|_{A_p^\g(\T^d)}\le \sum_{\j\not\in\D(n,T)} \|\eta_\j^Q(f)\|_{A_p^\g(\T^d)}\\
          &\le \sum_{\j\not\in\D(n,T)} 2^{\g |\j|_\infty}\|\eta_\j^Q(f)\|_{A_p(\T^d)}\\
          &= \sum_{\j\not\in\D(n,T)} 2^{-\a|\j|_1-(\b-\g)|\j|_\infty}2^{\a|\j|_1+\b|\j|_\infty}\|\eta_\j^Q(f)\|_{A_p(\T^d)}\\
          &\le \bigg(\sum_{\j\not\in\D(n,T)} 2^{-p'(\a|\j|_1+(\b-\g)|\j|_\infty)}\bigg)^{1/p'}\\
          &\qquad\qquad\qquad\times \bigg(\sum_{\j\not\in\D(n,T)} 2^{p(\a|\j|_1+\b|\j|_\infty)}\|\eta_\j^Q(f)\|_{A_p(\T^d)}^p\bigg)^{1/p}
  \end{split}
  \end{equation}
  Thus, Proposition~\ref{pr1} implies
    \begin{equation}\label{th1.3}
  \begin{split}
          \|f-P_{n,T}^Q f\|_{A_q^\g(\T^d)}\le \bigg(\sum_{\j\not\in\D(n,T)} 2^{-p'(\a|\j|_1+(\b-\g)|\j|_\infty)}\bigg)^{1/p'}\|f\|_{A_p^{\a,\b}(\T^d)}.
  \end{split}
\end{equation}
Next, combining~\eqref{th1.3} and~\eqref{lek.1}, we derive~\eqref{th1.1} in the case $p>1$.
The case $p=1$ is treated similarly using~\eqref{lek.2}.
%\begin{equation*}
%  \begin{split}
%      \max_{\j\not\in\D(n,T)} 2^{-(\a|\j|_1+(\b-\g))|\j|_\infty)}=??? \max_{\j\not\in\D(n,T)} 2^{-((\a-\e)|\j|_1-(\g-\b-\e))|\j|_\infty)}2^{-\e(|\j|_1-|\j|_\infty)}\le 2^{-\xi}.
%   \end{split}
%\end{equation*}

Now, we consider the case $1\le q<p\le \infty$.
Since $1/p'>1/q'$, we can apply the intermediate estimate in~\eqref{th1.2} with $p=q$ given by
$$
\|f-P_{n,T}^Q f\|_{A_q^\g(\T^d)}\le \sum_{\j\not\in\D(n,T)} 2^{\g |\j|_\infty}\|\eta_\j^Q(f)\|_{A_q(\T^d)}.
$$
Then, using H\"older's inequality, Lemma~\ref{lek}, and Lemma~\ref{le2} (note that condition $(i)$ implies that $\min(\a-1/q+1/p+\b,\a-1/q+1/p)>N+1/q'$), we get
\begin{equation}\label{th1.5}
  \begin{split}
              \|f-P_{n,T}^Q f\|_{A_q^\g(\T^d)}%&\le \sum_{\j\not\in\D(n,T)} 2^{\g |\j|_\infty}\|\eta_\j^Q(f)\|_{A_q(\T^d)}\\
%              &\le\sum_{\j\not\in\D(n,T)} 2^{-(\a-\frac1q+\frac1p)|\j|_1-(\b-\g)|\j|_\infty}2^{(\a-\frac1q+\frac1p)|\j|_1+\b|\j|_\infty}\|\eta_\j^Q(f)\|_{A_q(\T^d)}\\
              &\le \(\sum_{\j\not\in\D(n,T)} 2^{-p'((\a-\s_{p,q})|\j|_1+(\b-\g)|\j|_\infty)}\)^{1/p'}\\
               &\qquad\qquad\qquad\times \(\sum_{\j\not\in\D(n,T)} 2^{p((\a-\s_{p,q})|\j|_1+\b|\j|_\infty)}\|\eta_\j^Q(f)\|_{A_q(\T^d)}^p\)^{1/p}\\
               %&\lesssim \Omega(n) \(\sum_{\j\in\Z_+^d} 2^{p((\a-\frac1q+\frac1p)|\j|_1+\b|\j|_\infty)}\|\d_\j(f)\|_{A_q(\T^d)}^p\)^{1/p}\\
               &\lesssim \Omega(n) \(\sum_{\j\in\Z_+^d} 2^{p(\a|\j|_1+\b|\j|_\infty)}\|\d_\j(f)\|_{A_p(\T^d)}^p\)^{1/p}\\
               &\lesssim \Omega(n) \Vert f\Vert_{A_p^{\a,\b}(\T^d)},
  \end{split}
\end{equation}
where in the last estimate we have taken into account  Lemma~\ref{le1}.
%Next, using again H\"older's inequality, the estimate
%\begin{equation*}\label{th1.7}
%  \sum_{\j\not\in\D(n,T)} 2^{|\j|_1}\lesssim 2^{\frac{\xi}{\a-(\g-\b)}},
%\end{equation*}
%which can be found in~\cite[Lemma~6.3]{BDSU16}, and Lemma~\ref{le1}, we derive from~\eqref{th1.5} that
%\begin{equation}\label{th1.8}
%  \begin{split}
%     \|f-T_{\D(n,T)}^Q f\|_{A_q^\g(\T^d)}&\lesssim 2^{-\xi}\Bigg( \bigg(\sum_{\j\not\in\D(n,T)}2^{|\j|_1}\bigg)^{1-q/p}\\
%     &\qquad\qquad\qquad\times\bigg(\sum_{\j\not\in\D(n,T)}2^{p(\a|\j|_1+\b|\j|_\infty)}\|\d_\j(f)\|_{A_p(\T^d)}^p\bigg)^{q/p}\Bigg)^{1/q}\\
%     &\lesssim 2^{-\xi} 2^{\frac{\xi}{\a-(\g-\b)}(\frac1q-\frac1p)}\bigg(\sum_{\j\in\Z_+^d} 2^{p(\a|\j|_1+\b|\j|_\infty)}\|\d_\j(f)\|_{A_p(\T^d)}^p\bigg)^{1/p}\\
%     &= 2^{-\xi(1-\frac1{\a-(\g-\b)}(\frac1q-\frac1p))}\Vert f\Vert_{A_p^{\a,\b}(\T^d)},
%  \end{split}
%\end{equation}
%which proves the theorem.
\end{proof}

%\begin{remark}\label{re1}
%??? It follows from the proof of Theorem~\ref{th1} that in the case $p=1$, the assertion of the theorem remains true for $\e=0$.
%\end{remark}

\subsection{Error estimates in $A_{q,\mix}^\g(\T^d)$}

\begin{theorem}\label{th1+}
  Let $1\le p,q\le\infty$, $\b\in \R$, $\g> 0$, $\g-\b+\s_{p,q}<\a$, $\g+\s_{p,q}\le \a$, and let $Q=(Q_j(\cdot,\vp_j,\w\vp_j))_{j\in \Z_+}$, where $\w\vp_j\in \mathcal{D}'(\T)$ and $\vp_j\in \mathcal{T}_j^1$ for each $j\in \Z_+$. Suppose conditions \eqref{c1}, \eqref{c2}, and \eqref{c3} are satisfied with parameters $N\ge 0$ and $s>\max(\a+\b,\a)$.  Assume also that
  \begin{itemize}
    \item[$(i)$] $\min(\a+\b,\a)>N+1/p'$

    or

    \item[$(ii)$] $N=0$ and $\sup_{j\in\Z_+}\|\w\vp_j\|_{\w A_{q',j}(\T)}<\infty$.
  \end{itemize}
  Then, for all $f\in A_p^{\a,\b}(\T^d)$ and $n\in \N$, we have
  \begin{equation}\label{th1.1+}
\begin{split}
       \|f-P_{n,T}^Q f\|_{A_{q,\mix}^\g(\T^d)}\le C\Omega_{\mix}(n)\|f\|_{A_p^{\a,\b}(\T^d)},
\end{split}
  \end{equation}
where
\begin{equation*}
  \begin{split}
      \Omega_{\mix}(n)=\left\{
                                                                         \begin{array}{ll}
                                                                   \displaystyle 2^{-\(\a+\b-\g-\s_{p,q}-\((\a-\g-\s_{p,q})T+\b\)\frac{d-1}{d-T}\)n}n^{(d-1)\s_{p,q}}, & \hbox{$T\ge\frac{-\b}{\a-\g-\s_{p,q}}$,} \\
                                                                   \displaystyle
2^{-(\a+\b-\g-\s_{p,q})n}, & \hbox{$T<\frac{-\b}{\a-\g-\s_{p,q}}$,}
                                                                          \end{array}
                                                                        \right.
   \end{split}
\end{equation*}
where the constant $C$ does not depend on $f$ and $n$.
\end{theorem}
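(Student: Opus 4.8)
The plan is to mirror the structure of the proof of Theorem~\ref{th1}, but to replace the crude triangle‑inequality bound by an application of Proposition~\ref{pr2}; this substitution is exactly what upgrades the logarithmic exponent from $(d-1)(1-1/p)$ to the sharper $(d-1)\s_{p,q}$ demanded here. The key observation is that the mixed target space coincides with a hybrid space of positive leading smoothness, $A_{q,\mix}^\g(\T^d)=A_q^{\g,0}(\T^d)$ with $\g>0$. Writing the error in the form $f-P_{n,T}^Q f=\sum_{\j\notin\D(n,T)}\eta_\j^Q(f)$, where each $\eta_\j^Q(f)\in\mathcal{T}_\j^d$, Proposition~\ref{pr2} applied with the parameters $(\g,0)$ (the needed convergence in $A_q(\T^d)$ being furnished by Proposition~\ref{pr1}) gives
\begin{equation*}
  \|f-P_{n,T}^Q f\|_{A_{q,\mix}^\g(\T^d)}\lesssim\bigg(\sum_{\j\notin\D(n,T)}2^{q\g|\j|_1}\|\eta_\j^Q(f)\|_{A_q(\T^d)}^q\bigg)^{1/q}.
\end{equation*}
Compared with the bound $\sum_{\j\notin\D(n,T)}2^{\g|\j|_1}\|\eta_\j^Q(f)\|_{A_q}$ used in Theorem~\ref{th1}, the summation now carries the exponent $q$ in place of $1$, which is precisely what makes room for an $\ell_{p/q}$-H\"older step.

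In the main case $1\le q<p\le\infty$ I would, for each $\j$, factor
\begin{equation*}
  2^{q\g|\j|_1}\|\eta_\j^Q(f)\|_{A_q}^q=2^{-q((\a-\s_{p,q}-\g)|\j|_1+\b|\j|_\infty)}\cdot2^{q((\a-\s_{p,q})|\j|_1+\b|\j|_\infty)}\|\eta_\j^Q(f)\|_{A_q}^q
\end{equation*}
and apply H\"older's inequality with the conjugate exponents $(p/q)'$ and $p/q$. By Lemma~\ref{le2} with $\t=p$ (condition~$(i)$ guarantees its hypotheses for the shifted smoothness $\a-\s_{p,q}$, exactly as in the parenthetical remark accompanying~\eqref{th1.5}) together with Lemma~\ref{le1}, the $p/q$-factor is bounded by $\|f\|_{A_p^{\a,\b}(\T^d)}^q$. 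Since $q(p/q)'=1/\s_{p,q}$, the $(p/q)'$-factor equals $\Sigma^{q\s_{p,q}}$ with $\Sigma=\sum_{\j\notin\D(n,T)}2^{-\frac{1}{\s_{p,q}}((\a-\g-\s_{p,q})|\j|_1+\b|\j|_\infty)}$, and I would estimate $\Sigma$ by~\eqref{lek.1} of Lemma~\ref{lek} taken with $t=\frac{1}{\s_{p,q}}(\a-\g-\s_{p,q})$ and $r=-\frac{1}{\s_{p,q}}\b$. The two standing assumptions $\g+\s_{p,q}\le\a$ and $\g-\b+\s_{p,q}<\a$ are precisely the requirements $t\ge0$ and $r<t$ of Lemma~\ref{lek}, and the threshold $r/t=-\b/(\a-\g-\s_{p,q})$ coincides with the one in the statement. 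Putting the two factors together and taking the $q$-th root bounds the error by $\Sigma^{\s_{p,q}}\|f\|_{A_p^{\a,\b}}$; raising the estimate for $\Sigma$ to the power $\s_{p,q}$ then reproduces $\Omega_{\mix}(n)$, since the exponential rate comes out as $\a+\b-\g-\s_{p,q}-((\a-\g-\s_{p,q})T+\b)\frac{d-1}{d-T}$ and the boundary factor $n^{d-1}$ of~\eqref{lek.1} becomes $n^{(d-1)\s_{p,q}}$.

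In the complementary case $1\le p\le q\le\infty$ one has $\s_{p,q}=0$ and no logarithmic factor is expected, so the $\ell_{p/q}$-H\"older step is both unavailable and unnecessary. Starting again from the $\ell_q$-bound above, I would use $\|\eta_\j^Q(f)\|_{A_q}\le\|\eta_\j^Q(f)\|_{A_p}$ and the embedding $\ell_p\hookrightarrow\ell_q$ to write, with $y_\j=2^{\a|\j|_1+\b|\j|_\infty}\|\eta_\j^Q(f)\|_{A_p}$,
\begin{equation*}
  \bigg(\sum_{\j\notin\D(n,T)}2^{q\g|\j|_1}\|\eta_\j^Q(f)\|_{A_q}^q\bigg)^{1/q}\le\bigg(\sum_{\j\notin\D(n,T)}2^{p((\g-\a)|\j|_1-\b|\j|_\infty)}y_\j^p\bigg)^{1/p}\le\Big(\sup_{\j\notin\D(n,T)}2^{-((\a-\g)|\j|_1+\b|\j|_\infty)}\Big)\|(y_\j)\|_{\ell_p}.
\end{equation*}
Here Proposition~\ref{pr1} (in $A_p$) yields $\|(y_\j)\|_{\ell_p}\lesssim\|f\|_{A_p^{\a,\b}}$, and the supremum is controlled by estimate~\eqref{lek.2}; being a supremum rather than a sum, it carries no $n^{d-1}$ factor, in agreement with $n^{(d-1)\s_{p,q}}=1$.

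The heart of the matter, and the step I expect to require the most care, is obtaining the sharp exponent $(d-1)\s_{p,q}$ instead of the weaker $(d-1)(1-1/p)$ that a verbatim repetition of the proof of Theorem~\ref{th1} would produce. This rests on two linked facts: that the mixed target norm has positive leading smoothness $\g$, so that Proposition~\ref{pr2} — which needs $\a>0$ and is therefore inapplicable to the isotropic target of Theorem~\ref{th1} — converts the error bound from an $\ell_1$- into an $\ell_q$-sum over the blocks; and that the resulting $\ell_{p/q}$-H\"older inequality channels the geometric weight into $\ell_{1/\s_{p,q}}$, thereby attenuating Lemma~\ref{lek}'s boundary factor $n^{d-1}$ to $n^{(d-1)\s_{p,q}}$. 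Minor care is also needed at the degenerate configuration $\a=\g+\s_{p,q}$ (where $t=0$, but $\b>0$ is forced by $\g-\b+\s_{p,q}<\a$, keeping $\Sigma$ finite) and in the standard modifications for $q=\infty$ or $p=\infty$, which I would handle as in Proposition~\ref{pr1} and Theorem~\ref{th1}.
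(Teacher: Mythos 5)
Your proposal is correct and follows essentially the same route as the paper's own proof: identify $A_{q,\mix}^\g(\T^d)=A_q^{\g,0}(\T^d)$ and use Proposition~\ref{pr2} to pass to the $\ell_q$-sum over blocks, then for $1\le q<p\le\infty$ apply H\"older with exponents $p/q$, $(p/q)'$ together with Lemmas~\ref{le2}, \ref{le1} and estimate~\eqref{lek.1}, and for $1\le p\le q\le\infty$ pull out the supremum, bound it by~\eqref{lek.2}, and control the remaining $\ell_p$-sum by Proposition~\ref{pr1}. The only difference is the immaterial ordering in the case $p\le q$ (you apply Proposition~\ref{pr2} with exponent $q$ and then pass to $A_p$ coefficient-wise, while the paper first embeds $\|\cdot\|_{A_{q,\mix}^\g}\le\|\cdot\|_{A_{p,\mix}^\g}$ and applies Proposition~\ref{pr2} with exponent $p$), and your identification of the hypotheses $\g+\s_{p,q}\le\a$, $\g-\b+\s_{p,q}<\a$ with the conditions $t\ge 0$, $r<t$ of Lemma~\ref{lek} matches the paper's use of that lemma.
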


\begin{remark}\label{rems2}
$(i)$ This result generalizes Theorem~5.1 in~\cite{BDSU16}, which corresponds to the case
$p=q=2$, $T=\b=0$, and $Q=(I_j)_{j\in\Z_+}$, where $I_j$ is defined in~\eqref{I2}.

$(ii)$ Using the inequality (see, e.g.,~\cite[Lemma~5.7]{BDSU16})
$$
\|f\|_{L_r(\T^d)}\lesssim \|f\|_{A_{2,\mix}^{\frac12-\frac1r}(\T^d)},\quad 2<r<\infty,
$$
we easily obtain that under conditions of Theorem~\ref{th1+} with $q=2$ and $\g=\frac12-\frac1r$, inequality~\eqref{th1.1+} implies the following error estimate:
  \begin{equation}\label{th1.1+++}
\begin{split}
       \|f-P_{n,T}^Q f\|_{L_r(\T^d)}\le C\w\Omega_{\mix}(n)\|f\|_{A_p^{\a,\b}(\T^d)},
\end{split}
  \end{equation}
where
\begin{equation*}
  \begin{split}
      \w\Omega_{\mix}(n)=\left\{
                                                                         \begin{array}{ll}
                                                                   \displaystyle 2^{-\(\a+\b-\w\s_{p,r}-\((\a-\w\s_{p,r})T+\b\)\frac{d-1}{d-T}\)n}n^{(d-1)\s_{p,2}}, & \hbox{$T\ge\frac{-\b}{\a-\w\s_{p,r}}$,} \\
                                                                   \displaystyle
2^{-(\a+\b-\w\s_{p,r})n}, & \hbox{$T<\frac{-\b}{\a-\w\s_{p,r}}$,}
                                                                          \end{array}
                                                                        \right.
   \end{split}
\end{equation*}
and
$$
\w\s_{p,r}=\(\frac12-\frac1r\)+\(\frac12-\frac1p\)_+.
$$
Comparing inequalities~\eqref{qst} and~\eqref{th1.1+++} with $r=q'$ and $1<q<2$, we see that~\eqref{th1.1+++} provides better approximation order in the case $1<q<2\le p\le \infty$.
\end{remark}

\begin{proof}[Proof of Theorem~\ref{th1+}]
 First, we consider the case $1\le p\le q\le \infty$.  %Denote $\D(\a-\g,\xi)$.
  Using Proposition~\ref{pr2} with
  $$
  f_\j=\left\{
         \begin{array}{ll}
           \eta_\j^Q(f), & \hbox{$\j \not\in \D(n,T)$,} \\
           0, & \hbox{$\j\in \D(n,T)$,}
         \end{array}
       \right.
  $$
and taking into account that $A_{q,\mix}^\g(\T^d)=A_q^{\g,0}(\T^d)$ and $f-P_{n,T}^Q f=\sum_{\j\in\Z_+^d} f_\j$, we obtain
 \begin{equation}\label{th2.2}
   \begin{split}
      &\|f-P_{n,T}^Q f\|_{A_{q, \mix}^\g(\T^d)}\\&\le \|f-P_{n,T}^Q f\|_{A_{p, \mix}^\g(\T^d)}
      \lesssim \(\sum_{\j\in\Z_+^d}2^{p\g |\j|_1}\|f_\j\|_{A_p(\T^d)}^p\)^{1/p}\\
      &=\(\sum_{\j\not\in\D(n,T)} 2^{-p(\a-\g)|\j|_1-p\b|\j|_\infty}2^{p(\a|\j|_1+\b|\j|_\infty)}\|\eta_\j^Q(f)\|_{A_p(\T^d)}^p\)^{1/p}\\
      &\lesssim \max_{\j\not\in\D(n,T)} 2^{-p(\a-\g)|\j|_1-p\b|\j|_\infty} \(\sum_{\j\in \Z_+^d} 2^{p(\a|\j|_1+\b|\j|_\infty)}\|\eta_\j^Q(f)\|_{A_p(\T^d)}^p\)^{1/p}\\
      &\lesssim \Omega_{\mix}(n) \|f\|_{A_{p}^{\a,\b}(\T^d)},
   \end{split}
 \end{equation}
where the last inequality follows from Proposition~\ref{pr1}.

%The case $q=\infty$ can be treated similarly.

Second, let $1\le q<p\le\infty$. Similarly to the proof of~\eqref{th1.5}, using estimates~\eqref{th2.2} with $p=q$, H\"older's inequality, and Lemmas~\ref{lek} and~\ref{le2}, we have
\begin{equation*}%\label{th2.3}
  \begin{split}
    \|f-&P_{n,T}^Q f\|_{A_{q, \mix}^\g(\T^d)}\\
    &\le \(\sum_{\j\not\in\D(n,T)} 2^{q\g|\j|_1}\|\eta_\j^Q(f)\|_{A_q(\T^d)}^q\)^{1/q}\\
%&= \(\sum_{\j\not\in\D(n,T)}  2^{-q(\a-\g-\s_{p,q})|\j|_1-q\b|\j|_\infty}  2^{q((\a-\s_{p,q})|\j|_1+\b|\j|_\infty)}\|\eta_\j^Q(f)\|_{A_q(\T^d)}^q\)^{1/q}\\
&\le \Bigg( \bigg(\sum_{\j\not\in\D(n,T)} 2^{-\frac{qp}{p-q}\((\a-\g-\s_{p,q})|\j|_1+\b|\j|_\infty\)}\bigg)^{1-q/p}\\
    &\qquad\qquad\qquad\times\bigg(\sum_{\j\not\in\D(n,T)} 2^{p((\a-\s_{p,q})|\j|_1+\b|\j|_\infty)}\|\eta_\j^Q(f)\|_{A_q(\T^d)}^p\bigg)^{q/p}\Bigg)^{1/q}\\
&\lesssim \Omega_{\mix}(n)\bigg(\sum_{\j\in\Z_+^d} 2^{p((\a-\s_{p,q})|\j|_1+\b|\j|_\infty)}\|\eta_\j^Q(f)\|_{A_q(\T^d)}^p\bigg)^{1/p}\\
&\lesssim \Omega_{\mix}(n)\bigg(\sum_{\j\in\Z_+^d} 2^{p(\a|\j|_1+\b|\j|_\infty)}\|\d_\j(f)\|_{A_p(\T^d)}^p\bigg)^{1/p}\\
&\lesssim \Omega_{\mix}(n)\|f\|_{A_{p}^{\a,\b}(\T^d)},
%    &\lesssim \(\sum_{\j\not\in\D(n,T)} 2^{q\g|\j|_1}\|\d_\j(f)\|_{A_q(\T^d)}^q\)^{1/q}\\
%    &\lesssim \(\sum_{\j\not\in\D(n,T)} 2^{q\g|\j|_1}2^{q(\frac1q-\frac1p)|\j|_1}\|\d_\j(f)\|_{A_p(\T^d)}^q\)^{1/q}\\
%    &= \(\sum_{\j\not\in\D(n,T)} 2^{-q(\a-\g-\s_{p,q})|\j|_1-q\b|\j|_\infty}2^{q(\a|\j|_1+\b|\j|_\infty)}\|\d_\j(f)\|_{A_p(\T^d)}^q\)^{1/q}.
  \end{split}
\end{equation*}
%Next, from~\eqref{th2.3}, using H\"older's inequality and Lemma~\ref{lek},
%%the well-known fact that
%%\begin{equation}\label{sm}
%%  \sum_{|\j|_1>n} 2^{-\zeta |\j|_1}\lesssim n^{d-1}2^{-\zeta n},\quad \zeta>0
%%\end{equation}
%%(see, e.g.,~\cite{T86}) and Lemma~\ref{le1},
%we derive
%\begin{equation}\label{th2.4}
%  \begin{split}
%    \|f-P_{n,T}^Q f\|_{A_{q, \mix}^\g(\T^d)}&\le \Bigg( \bigg(\sum_{\j\not\in\D(n,T)} 2^{-\frac{qp}{p-q}\((\a-\g-\s_{p,q})|\j|_1+\b|\j|_\infty\)}\bigg)^{1-q/p}\\
%    &\qquad\qquad\qquad\times\bigg(\sum_{\j\not\in\D(n,T)} 2^{q(\a|\j|_1+\b|\j|_\infty)}\|\d_\j(f)\|_{A_p(\T^d)}^p\bigg)^{q/p}\Bigg)^{1/q}\\
%    &\lesssim \Omega'(n)\|f\|_{A_{p}^{\a,\b}(\T^d)},
%  \end{split}
%\end{equation}
which proves~\eqref{th1.1+} for $1\le q<p\le\infty$ by Lemma~\ref{le1}. %The case $1\le q<p=\infty$ can be treated similarly.
\end{proof}

%??? Remark ???

It is not difficult to see that Theorems~\ref{th1} and~\ref{th1+} can also
be  established for more general operators
$$
P_{\Gamma}^Q=\sum_{\j\in \Gamma}\eta_\j^Q,
$$
where $\Gamma$ is some arbitrary set of indices in $\Z_+^d$. More precisely, we obtain the following remarks.

\begin{remark}\label{rem.th1}
Suppose that conditions of Theorem~\ref{th1} hold with $\Gamma$ instead of $\D(n,T)$.   Then, for all $f\in A_p^{\a,\b}(\T^d)$, we have
  \begin{equation*}%\label{th1.1++}
\begin{split}
       \|f-P_{\Gamma}^Q f\|_{A_q^\g(\T^d)}\le C \bigg(\sum_{\j\not\in\Gamma} 2^{-p'((\a-\s_{p,q})|\j|_1+(\b-\g))|\j|_\infty)}\bigg)^{1/p'}\|f\|_{A_p^{\a,\b}(\T^d)},
\end{split}
  \end{equation*}
where the constant $C$ does not depend on $f$ and $\Gamma$.
%with the usual modification for $p=1$.
\end{remark}

\begin{remark}\label{rem.th1+}
Suppose that conditions of Theorem~\ref{th1+} hold with $\Gamma$ instead of $\D(n,T)$.   Then, for all $f\in A_p^{\a,\b}(\T^d)$, we have
  \begin{equation*}%\label{th1.1+++}
\begin{split}
       \|f-P_{\Gamma}^Q f\|_{A_{q,\mix}^\g(\T^d)}\le C \Omega_\Gamma\|f\|_{A_p^{\a,\b}(\T^d)},
\end{split}
  \end{equation*}
where
$$
\Omega_\Gamma=\left\{
                \begin{array}{ll}
                  \displaystyle\max_{\j\not\in\Gamma} 2^{-p(\a-\g)|\j|_1-p\b|\j|_\infty}, & \hbox{$1\le p\le q\le \infty$,} \\
                  \displaystyle\bigg(\sum_{\j\not\in\Gamma} 2^{-\frac{qp}{p-q}\((\a-\g-\s_{p,q})|\j|_1+\b|\j|_\infty\)}\bigg)^{\frac1q-\frac1p}, & \hbox{$1\le q<p\le\infty$,}
                \end{array}
              \right.
$$
and the constant $C$ does not depend on $f$ and $\Gamma$.
%with the usual modification for $p=1$.
\end{remark}

\subsection{Sharpness}
Here, we show that inequalities~\eqref{th1.1} and~\eqref{th1.1+} given in Theorems~\ref{th1} and~\ref{th1+} are sharp for specific cases of parameters. Those cases are of special interest since they provide the best order of approximation and simultaneously are optimal with respect to the computation time (cf.~\eqref{card}).  %in the important cases of parameters estimates given in Theorems~\ref{th1} and~\ref{th1+} are sharp.

\begin{theorem}\label{ths}
$(i)$ Under conditions of Theorem~\ref{th1}, if $0<T<\frac{\g-\b}{\a}$, we have
    \begin{equation}\label{ths.1}
\begin{split}
       \sup_{f\in UA_p^{\a,\b}(\T^d)}\|f-P_{n,T}^Q f\|_{A_p^\g(\T^d)}\asymp 2^{-(\a+\b-\g)n}
\end{split}
  \end{equation}
for sufficiently large $n$.

$(ii)$ Under conditions of Theorem~\ref{th1+}, if $0<T<\frac{-\b}{\a-\g}$, we have
    \begin{equation*}%\label{ths.2}
\begin{split}
       \sup_{f\in UA_p^{\a,\b}(\T^d)}\|f-P_{n,T}^Q f\|_{A_{p,\mix}^\g(\T^d)}\asymp 2^{-(\a+\b-\g)n}
\end{split}
  \end{equation*}
for sufficiently large $n$.
\end{theorem}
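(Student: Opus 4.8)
The plan is to read off the upper bounds from the main theorems and to match them with a single, explicitly chosen univariate test character. Since both error norms in the statement are measured with $q=p$, we have $\s_{p,p}=0$, so the thresholds $\frac{\g-\b}{\a-\s_{p,p}}=\frac{\g-\b}{\a}$ (for part~$(i)$) and $\frac{-\b}{\a-\g-\s_{p,p}}=\frac{-\b}{\a-\g}$ (for part~$(ii)$) are exactly those in the hypotheses. Hence for $0<T<\frac{\g-\b}{\a}$ the quantity $\Omega(n)$ of Theorem~\ref{th1} falls into its second branch and equals $2^{-(\a+\b-\g)n}$, and likewise for $0<T<\frac{-\b}{\a-\g}$ the quantity $\Omega_{\mix}(n)$ of Theorem~\ref{th1+} equals $2^{-(\a+\b-\g)n}$. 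Thus Theorems~\ref{th1} and~\ref{th1+} immediately give $\sup_{f\in UA_p^{\a,\b}}\|f-P_{n,T}^Q f\|_{A_p^\g}\lesssim 2^{-(\a+\b-\g)n}$ (respectively in the mixed norm), and only the matching lower bound needs to be produced.

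For the lower bound I would test on the single character concentrated in one coordinate,
$$
f_n(\x)=(1+2^n)^{-(\a+\b)}\,e^{{\rm i}2^n x_1},\qquad \k=2^n\ee_1 .
$$
Since $|\k|=2^n=|k_1|$, a direct evaluation of the norm gives $\|f_n\|_{A_p^{\a,\b}(\T^d)}=1$, and the same frequency carries mixed weight $(1+2^n)^\g$, so $f_n\in UA_p^{\a,\b}(\T^d)$ serves for both parts. The structural heart of the argument is the identity $P_{n,T}^Q f_n=Q_n^1 f_n$, where $Q_n^1$ is the univariate operator in $x_1$. Indeed, $f_n$ is constant in $x_2,\dots,x_d$; taking $\ell=0\in D_j$ in~\eqref{c3} forces $\h{\vp_j}(0)\h{\w\vp_j}(0)=1$, so each $Q_j^i$ reproduces constants and $(Q_{j_i}^i-Q_{j_i-1}^i)$ annihilates functions independent of $x_i$ whenever $j_i\ge1$. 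Therefore $\eta_\j^Q(f_n)=0$ unless $j_2=\dots=j_d=0$, and $\eta_{(j_1,0,\dots,0)}^Q(f_n)=(Q_{j_1}^1-Q_{j_1-1}^1)f_n$. As $(j_1,0,\dots,0)\in\D(n,T)$ is equivalent to $(1-T)j_1\le(1-T)n$, i.e. $j_1\le n$, the telescoping sum over $0\le j_1\le n$ collapses to $Q_n^1 f_n$ (recall $Q_{-1}=0$).

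It then remains to bound $f_n-Q_n^1 f_n$ from below, a purely univariate aliasing computation. Because $2^n\notin D_n=[-2^{n-1},2^{n-1})\cap\Z$, the operator $Q_n^1$ maps $e^{{\rm i}2^n x_1}$ to a multiple of $e^{{\rm i}\ell_0 x_1}$ with $\ell_0\equiv 0 \pmod{2^n}$, i.e. to a constant, and so it does not touch the $e^{{\rm i}2^n x_1}$ component of the error, which keeps its full coefficient $(1+2^n)^{-(\a+\b)}$. Retaining only this frequency in the norm yields
$$
\|f_n-P_{n,T}^Q f_n\|_{A_p^\g(\T^d)}\ge (1+2^n)^{\g}(1+2^n)^{-(\a+\b)}=(1+2^n)^{-(\a+\b-\g)}\gtrsim 2^{-(\a+\b-\g)n},
$$
and the same bound holds verbatim for $\|\cdot\|_{A_{p,\mix}^\g(\T^d)}$ since $2^n\ee_1$ carries mixed weight $(1+2^n)^\g$ as well. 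Combined with the upper bounds this gives both equivalences for all sufficiently large $n$.

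The main obstacle is precisely the reduction $P_{n,T}^Q f_n=Q_n^1 f_n$: one must exploit the tensor-product form of $\eta_\j^Q$ together with the \emph{exact} reproduction of constants forced by~\eqref{c3} at $\ell=0$ to eliminate every index with $j_i>0$ for $i\ge2$, after which the telescoping in $j_1$ is automatic. The subsequent aliasing estimate is routine, and it is exactly the choice of a single hard frequency concentrated in one coordinate—rather than an entire dyadic block—that produces the clean rate without the logarithmic factor present in the complementary regime $T\ge\frac{\g-\b}{\a}$ (respectively $T\ge\frac{-\b}{\a-\g}$).
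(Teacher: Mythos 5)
Your proof is correct, and its lower bound is obtained by a genuinely different route than the paper's. The paper never exhibits an extremal function: it invokes the classical lower estimate for linear widths (Lemma~\ref{lelin}, i.e.\ $\lambda_m(B_{m+1}(r),X)\ge r$ for the ball of any $(m+1)$-dimensional subspace), applied with $X=A_p^\g(\T^d)$ and $L_{2^n+1}={\rm span}\,\{e^{{\rm i}kx_1}\,:\,k=0,\dots,2^n\}$, together with the cardinality bound~\eqref{card}, which guarantees ${\rm rank}\,P_{n-n_0,T}^Q\le 2^n$ for a fixed shift $n_0$; the elementary embedding $B_{2^n+1}(2^{-(\a+\b-\g)n})\subset UA_p^{\a,\b}(\T^d)$ then yields the bound. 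That argument proves something stronger --- no continuous linear method of rank at most $2^n$ can beat the rate, so the estimate is optimal among all linear algorithms of comparable rank --- but it requires the rank count, the continuity of $P_{n,T}^Q$ on $A_p^\g(\T^d)$, and the shift $n-n_0$ is precisely why the paper states the result only ``for sufficiently large $n$''. Your argument instead exploits the structure of the operators: condition~\eqref{c3} at $\ell=0$ forces $\h{\vp_j}(0)\h{\w\vp_j}(0)=1$, hence reproduction of constants, hence $\eta_\j^Q(f_n)=0$ unless $j_2=\dots=j_d=0$; the telescoping sum then collapses $P_{n,T}^Q f_n$ to $Q_n^1 f_n$, and the aliasing identity (the same representation of $Q_j$ used in the proof of Lemma~\ref{leKKS}) shows that $Q_n^1$ sends $e^{{\rm i}2^nx_1}$ to a constant, leaving the coefficient at $2^n\ee_1$ intact. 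This is more elementary and constructive, gives the bound for every $n$ rather than only large $n$, and sidesteps rank and continuity considerations; its price is that the conclusion is tied to the specific family $P_{n,T}^Q$ rather than to all linear methods of the same rank. Your treatment of the upper bound (reading off the second branch of $\Omega(n)$ and $\Omega_{\mix}(n)$ with $\s_{p,p}=0$) coincides with the paper's.
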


\begin{proof} In view of Theorems~\ref{th1} and~\ref{th1+}, it is enough to consider only estimates from below.
We prove only $(i)$. The assertion $(ii)$ can be treated similarly.  We follow the idea of the proof of~\cite[Theorem~6.7]{BDSU16} (see also~\cite{DU13}) taking into account the following lemma on lower estimates for linear widths (see, e.g.,~\cite[Theorem~1]{Ti60}).
\begin{lemma}\label{lelin}
  Let $L_{m+1}$ be $(m+1)$-dimensional subspace in a Banach space $X$, and let $B_{m+1}(r):=\{f\in L_{m+1}\,:\,\|f\|_X\le r\}$. Then
\begin{equation*}
  \l_m(B_{m+1}(r),X):=\inf_{\mathcal{A}_m}\sup_{f\in B_{m+1}(r)}\|f-\mathcal{A}_m f\|_X\ge r,
\end{equation*}
where infimum is taken over all continuous linear operators $A_m$ in $X$ with rank at most $m$.
\end{lemma}

We use this lemma with $X=A_p^\g(\T^d)$ and $L_{2^n+1}={\rm span}\,\{e^{{\rm i}kx_1}\,:\,k=0,\dots,2^n\}$. Let also $n_0\in \N$ be such that
${\rm rank}\, P_{n-n_0,T}^Q \le 2^n$ (we can always find such $n_0$ in view of~\eqref{card}).
%to be the space of all trigonometric polynomials with frequencies in
%$\mathcal{H}_{\D(n,T)}=\bigcup_{\k\in \D(n,T)}\mathcal{P}_\k$, where $\mathcal{P}_\k$ is defined in~\eqref{Pk} and $m+1={\rm card}(\mathcal{H}_{\D(n,T)})$.
For any $f\in L_{2^n+1}$, we have
\begin{equation*}
\begin{split}
    \|f\|_{A_p^{\a,\b}(\T^d)}&=\(\sum_{k_1=0}^n 2^{p(\a+\b)k_1}\|\d_{k_1,0,\dots,0}(f)\|_{A_p(\T^d)}^p\)^{1/p}\\
&\le \max_{k_1\in [0,n]}2^{(\a+\b-\g)k_1}\(\sum_{k_1=0}^n 2^{p\g k_1}\|\d_{k_1,0,\dots,0}(f)\|_{A_p(\T^d)}^p\)^{1/p}\\
&\le 2^{(\a+\b-\g)n}\|f\|_{A_p^\g(\T^d)}.
\end{split}
\end{equation*}
%Here we use the following simple estimates ???:
%$$
%\max_{\k\in \D(n,T)}2^{\a|\k|_1+(\b-\g)|\k|_\infty}\le 2^{\a(1-T)n}\max_{\k\in \D(n,T)}2^{(\a T+\b-\g)|\k|_\infty}\le 2^{(\a+\b-\g)n}.
%$$
Thus, by choosing $r=2^{-(\a+\b-\g)n}$, we  get that $B_{2^n+1}(r)\subset UA_p^{\a,\b}(\T^d)$. Using this embedding and Lemma~\ref{lelin},  we obtain
\begin{equation*}
  \begin{split}
     \sup_{f\in UA_p^{\a,\b}(\T^d)}\|f-P_{n-n_0,T}^Q f\|_{A_p^\g(\T^d)}\ge \l_{2^n}(B_{2^n+1}(r),A_p^\g(\T^d))\ge 2^{-(\a+\b-\g)n},
   \end{split}
\end{equation*}
which implies~\eqref{ths.1}.
\end{proof}

\begin{remark}
Note that the sharpness of Theorem~\ref{th1} under certain natural restrictions on distributions $\w\vp_j$ in the case $T=0$, $\b=\g=0$, and $p=2\le q\le \infty$ follows from the proof of~\cite[Theorem~4]{K21}.
For particular cases of the parameters (mainly for the cases $T=0$, $p,q\in \{1,2\}$, $p\le q$, $\g\in \{0,1\}$, $\b=0$), the sharpness of Theorem~\ref{th1} can be also established using general estimates of linear widths (see, e.g.,~\cite{BDSU16}, \cite{NNS20}).

%In the case $p=1$, $q=2$, $T=0$, $\g=\b=0$, $\a>0$, the sharpness follows from~\cite{}
\end{remark}

%\newpage

\section{Effective error estimates}

\subsection{Energy-norm based sparse grids}
Along with the general operators $P_{n,T}^Q$, in~\cite{BDSU16} and \cite{D16} the authors studied
quasi-interpolation operators
$$
P_{\D(\xi)}^Q=\sum_{\j\in \D(\xi)}\eta_\j^Q, \quad \xi>0,
$$
with specific choice of the family $Q$, where
\begin{equation*}%\label{se}
 \D(\xi)=\{\k\in \Z_+^d\,:\, (\a-\s_{p,q}-\e)|\k|_1-(\g-\b-\e)|\k|_\infty\le \xi\}.
\end{equation*}
%\D(\xi;\a,\b,\g,p,q,\e)
For the reader's convenience, we reformulate Theorem~\ref{th1} for $P_{\D(\xi)}^Q$ noting that $\D(\xi)$ corresponds to the set $\D(n,T)$ with $T=\frac{\g-\b-\e}{\a-\s_{p,q}-\e}$ and $n=\frac{\xi}{\a-\s_{p,q}-\g+\b}$.

\begin{corollary}\label{cor1}
Under conditions of Theorem~\ref{th1}, if  $0<\e<\g-\b<\a-\s_{p,q}$, then, for all $f\in A_p^{\a,\b}(\T^d)$ and $\xi\in \N$, we have
  \begin{equation*}%\label{th1.1++}
     \|f-P_{\D(\xi)}^Q f\|_{A_q^\g(\T^d)}\le C2^{-\xi}\|f\|_{A_p^{\a,\b}(\T^d)}, %\left\{
                                                                                               %\begin{array}{ll}
                                                                                                % 2^{-\xi} , & \hbox{$p\le q$;} \\
                                                                                                 %2^{-\xi(1-\frac{1}{\a-(\g-\b)}(\frac1p-\frac1q))} , & \hbox{$q<p$.}
                                                                                               %\end{array}
                                                                                             %\right.
  \end{equation*}
where the constant $C$ does not depend on $f$ and $\xi$.
In particular, in the case $p=q=2$ and $\g=0$, for all $f\in H^{\a,\b}(\T^d)$ and $\xi\in \N$, we have
  \begin{equation*}%\label{th1.1++}
     \|f-P_{\D(\xi)}^Q f\|_{L_2(\T^d)}\le C2^{-\xi}\|f\|_{H^{\a,\b}(\T^d)}. %\left\{
                                                                                               %\begin{array}{ll}
                                                                                                % 2^{-\xi} , & \hbox{$p\le q$;} \\
                                                                                                 %2^{-\xi(1-\frac{1}{\a-(\g-\b)}(\frac1p-\frac1q))} , & \hbox{$q<p$.}
                                                                                               %\end{array}
                                                                                             %\right.
  \end{equation*}
%where $H^{\a,\b}(\T^d)$ is the hybrid Sobolev space.
\end{corollary}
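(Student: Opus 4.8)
The plan is to recognize Corollary~\ref{cor1} as a direct specialization of Theorem~\ref{th1}, so that the whole task reduces to a change of parameters followed by the evaluation of $\Omega(n)$. First I would verify the identity $\D(\xi)=\D(n,T)$ asserted just before the statement, with $T=\frac{\g-\b-\e}{\a-\s_{p,q}-\e}$ and $n=\frac{\xi}{\a+\b-\g-\s_{p,q}}$. Dividing the defining inequality $(\a-\s_{p,q}-\e)|\k|_1-(\g-\b-\e)|\k|_\infty\le \xi$ by the number $\a-\s_{p,q}-\e$, which is positive since $0<\e<\g-\b<\a-\s_{p,q}$, gives $|\k|_1-T|\k|_\infty\le \frac{\xi}{\a-\s_{p,q}-\e}$; and since $1-T=\frac{\a+\b-\g-\s_{p,q}}{\a-\s_{p,q}-\e}$, the right-hand side is exactly $(1-T)n$ for the stated $n$. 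Hence $P_{\D(\xi)}^Q=P_{n,T}^Q$, and the hypotheses $\a>\s_{p,q}$, $\g\ge 0$, $\g-\b<\a-\s_{p,q}$ carry over unchanged, while $0<T<1$ follows from $0<\g-\b-\e$ and $\g-\b<\a-\s_{p,q}$.

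Next I would pin down which branch of $\Omega(n)$ is active, as this is the one genuinely delicate bookkeeping point. Setting $\phi(t)=\frac{(\g-\b)-t}{(\a-\s_{p,q})-t}$ for $t\in[0,\g-\b)$, the hypothesis $\g-\b<\a-\s_{p,q}$ forces $\phi'(t)<0$, so $T=\phi(\e)<\phi(0)=\frac{\g-\b}{\a-\s_{p,q}}$. Thus the second branch of $\Omega$ applies and $\Omega(n)=2^{-(\a+\b-\g-\s_{p,q})n}$. Substituting $n=\frac{\xi}{\a+\b-\g-\s_{p,q}}$, which is well defined because $\a+\b-\g-\s_{p,q}>0$, collapses the exponent to $2^{-\xi}$. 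Invoking \eqref{th1.1} then yields $\|f-P_{\D(\xi)}^Q f\|_{A_q^\g(\T^d)}\le C2^{-\xi}\|f\|_{A_p^{\a,\b}(\T^d)}$. For the displayed special case I would take $p=q=2$ and $\g=0$, so $\s_{2,2}=0$; then $A_2^0(\T^d)=L_2(\T^d)$ and $A_2^{\a,\b}(\T^d)=H^{\a,\b}(\T^d)$ by Remark~\ref{remb}$(ii)$, and the general bound specializes to the stated $L_2$-estimate (here the hypotheses force $\b<0$ and $\a+\b>0$, the genuine energy-norm regime).

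The only real subtlety I foresee is that the level $n$ produced by the reduction need not be an integer, whereas Theorem~\ref{th1} is phrased for $n\in\N$. I would dispose of this by observing that the proof of Theorem~\ref{th1} uses only Proposition~\ref{pr1}, Lemma~\ref{leB}, H\"older's inequality, and the tail bound Lemma~\ref{lek}, all of which are valid verbatim for real levels $n\ge 1$. Alternatively, and perhaps more transparently, one can apply Remark~\ref{rem.th1} directly with $\Gamma=\D(\xi)$ and estimate the resulting tail sum $\sum_{\j\notin\D(\xi)}2^{-p'((\a-\s_{p,q})|\j|_1-(\g-\b)|\j|_\infty)}$: writing $L(\j):=(\a-\s_{p,q}-\e)|\j|_1-(\g-\b-\e)|\j|_\infty$, one has $(\a-\s_{p,q})|\j|_1-(\g-\b)|\j|_\infty=L(\j)+\e(|\j|_1-|\j|_\infty)$ with $L(\j)>\xi$ on the complement of $\D(\xi)$ and $|\j|_1-|\j|_\infty\ge 0$, so the extra factor $2^{-p'\e(|\j|_1-|\j|_\infty)}$ guarantees summability and the bound $\lesssim 2^{-p'\xi}$, giving $2^{-\xi}$ after the $1/p'$ power. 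This route bypasses integrality entirely, so the argument is bookkeeping rather than analysis.
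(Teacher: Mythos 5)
Your proposal is correct and takes the same route as the paper, whose entire proof is the one-line substitution $T=\frac{\g-\b-\e}{\a-\s_{p,q}-\e}$, $n=\frac{\xi}{\a+\b-\g-\s_{p,q}}$ in Theorem~\ref{th1}; your verification of the set identity $\D(\xi)=\D(n,T)$, of the active branch $T<\frac{\g-\b}{\a-\s_{p,q}}$ of $\Omega(n)$, and of the non-integrality of $n$ makes explicit what the paper leaves implicit. One caveat on your alternative tail-sum argument: $\sum_{\j\in\Z_+^d}2^{-p'\e(|\j|_1-|\j|_\infty)}$ diverges (the exponent vanishes along the coordinate axes), so after extracting $2^{-p'\xi}$ you must retain the excess factor $2^{-p'(L(\j)-\xi)}$ in the sum --- which does give a bound uniform in $\xi$, e.g.\ by summing in the variables $u=|\j|_\infty$ and $v=|\j|_1-|\j|_\infty$ --- rather than bound $2^{-p'L(\j)}$ by $2^{-p'\xi}$ and sum the $\e$-factor alone.
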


\begin{proof}
The proof directly follows from Theorem~\ref{th1} by taking $T=\frac{\g-\b-\e}{\a-\s_{p,q}-\e}$.
\end{proof}

\begin{remark}\label{re1}
$(i)$ It follows from the proof of Theorem~\ref{th1} that in the case $p=1$, the assertion of Corollary~\ref{cor1} remains true for $\e=0$.

$(ii)$ Corollary~\ref{cor1} extends Theorem 4.1 in~\cite{BDSU16}, cf.~\cite[Remark~4.4]{BDSU16}, which corresponds to the case $p=q=2$ and $Q=(I_j)_{j\in\Z_+}$, where $I_j$ is defined in~\eqref{I+}.
\end{remark}

\subsection{Smolyak grids}
In some special cases of parameters in Theorems~\ref{th1} and~\ref{th1+}, the Smolyak algorithm, i.e., the operators $P_{n,T}^Q$ with $T=0$, provides more effective error estimates with respect to the number of frequencies than the operators $P_{n,T}^Q$, $0<T<1$, which correspond to the energy norm based grids.
%In this section, we would like to consider some of these cases.
%$$
%P_{n}^Q=\sum_{|\j|_1\le n}\eta_\j^Q,\quad n\in \N.
%$$
%where
%\begin{equation}\label{ss}
%  \D(\a,\xi)=\{\k\in \Z_+^d\,:\, \a|\k|_1\le \xi\},\quad \xi>0.
%\end{equation}
In particular, applying Theorem~\ref{th1} with  $T=0$ and $\b=\g$, we obtain the following corollary about approximation in the space $A_q^\b(\T^d)$.

\begin{corollary}\label{cor3}
Under conditions of Theorem~\ref{th1}, for all $f\in A_{p}^{\a,\b}(\T^d)$ and $n\in \N$, we have
  \begin{equation}\label{th3.1}
     \|f-P_{n,0}^Q f\|_{A_{q}^\b(\T^d)}\le C2^{-(\a-\s_{p,q})n}n^{(d-1)(1-\frac1p)}\|f\|_{A_{p}^{\a,\b}(\T^d)}, %\left\{
                                                                      %  \begin{array}{ll}
                                                                      %    2^{-\a n}n^{(d-1)(1-\frac1p)}, & \hbox{$p\le q$,} \\
                                                                       %   2^{-(\a-\frac1q+\frac1p)n}n^{(d-1)(1-\frac1p)}, & \hbox{$q<p$,}
                                                                       % \end{array}
                                                                      %\right.
  \end{equation}
where the constant $C$ does not depend on $f$ and $n$. In particular, if $p=q=2$, we have
  \begin{equation*}%\label{th3.1}
     \|f-P_{n,0}^Q f\|_{H^\b(\T^d)}\le C2^{-\a n}n^{\frac{(d-1)}2}\|f\|_{H^{\a,\b}(\T^d)}.
  \end{equation*}
\end{corollary}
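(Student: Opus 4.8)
The plan is to obtain Corollary~\ref{cor3} as the specialization of Theorem~\ref{th1} to the parameters $T=0$ and $\g=\b$; no new analytic work is required, only a careful reading of the rate function $\Omega(n)$ under this substitution. First I would check that the hypotheses carry over: with $\g=\b$ the assumption $\g\ge0$ becomes $\b\ge0$, the assumption $\g-\b<\a-\s_{p,q}$ becomes $0<\a-\s_{p,q}$, i.e.\ $\a>\s_{p,q}$, and the structural conditions \eqref{c1}--\eqref{c3} together with the alternative $(i)$/$(ii)$ are inherited unchanged. Thus the full strength of Theorem~\ref{th1} is available.

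The key point is to select the correct branch of $\Omega(n)$. Since $\g=\b$ gives $\g-\b=0$, the threshold $\frac{\g-\b}{\a-\s_{p,q}}$ equals $0$, so the choice $T=0$ lands on the boundary of the regime $T\ge\frac{\g-\b}{\a-\s_{p,q}}$ and we use the first line of $\Omega(n)$. Substituting $T=0$ makes $(\a-\s_{p,q})T=0$, so the bracketed quantity equals $-(\g-\b)$, which vanishes because $\g=\b$; hence the correction term $\bigl((\a-\s_{p,q})T-(\g-\b)\bigr)\frac{d-1}{d-T}$ is zero. The remaining exponent $\a+\b-\g-\s_{p,q}$ collapses to $\a-\s_{p,q}$, while the logarithmic factor stays $n^{(d-1)(1-\frac1p)}$, giving precisely the bound \eqref{th3.1}. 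I would flag that being on the first branch is essential: the second branch would drop the power of $n$, so verifying the vanishing of the threshold is the one step worth making explicit.

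Finally I would record the Hilbert-space case $p=q=2$. Here $\s_{2,2}=(\tfrac12-\tfrac12)_+=0$, so the exponential rate sharpens to $2^{-\a n}$, the polynomial factor becomes $n^{(d-1)(1-\frac12)}=n^{(d-1)/2}$, and the identification $A_2^\b(\T^d)=H^\b(\T^d)$ from Remark~\ref{remb}$(ii)$ turns \eqref{th3.1} into the claimed estimate $\|f-P_{n,0}^Q f\|_{H^\b(\T^d)}\le C2^{-\a n}n^{(d-1)/2}\|f\|_{H^{\a,\b}(\T^d)}$. I do not expect a genuine obstacle; the only care needed is the branch selection just discussed.
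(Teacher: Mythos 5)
Your proposal is correct and matches the paper's own argument: the paper obtains Corollary~\ref{cor3} precisely by specializing Theorem~\ref{th1} to $T=0$ and $\g=\b$, landing on the first branch of $\Omega(n)$ (since the threshold $\frac{\g-\b}{\a-\s_{p,q}}$ vanishes), so the exponent collapses to $\a-\s_{p,q}$ with the factor $n^{(d-1)(1-\frac1p)}$ retained. Your explicit verification of the branch selection and the $p=q=2$ identification via Remark~\ref{remb}$(ii)$ is exactly the (implicit) content of the paper's derivation.
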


\begin{remark}\label{remF}
By the same arguments as in Remark~\ref{remS1}$(ii)$, we have that inequality~\eqref{th3.1} with $1\le q\le 2$ and $\b=0$ implies that
$$
\|f-P_{n,0}^Q f\|_{L_{q'}(\T^d)}\le C2^{-(\a-\s_{p,q})n}n^{(d-1)(1-\frac1p)}\|f\|_{A_{p,\mix}^{\a}(\T^d)}.
$$
In particular, if $q=1$ and $Q=(I_j)_{j\in \Z_+}$, the above inequality generalizes~\cite[Theorem~5.6]{BDSU16} (the case $p=2$) and the main results of~\cite{Ha92} (the case $p=\infty$, which corresponds to the Korobov space).

%the corresponding results in~\cite{BDSU16} and~\cite{Ha92}.
%:
%$$
%\|f-P_{n,0}^Q f\|_{L_\infty(\T^d)}\le C2^{-(\a-(1-\frac1p))n}n^{(d-1)(1-\frac1p)}\|f\|_{A_{p,\mix}^{\a}(\T^d)}.
%$$
\end{remark}

In a similar way, applying Theorem~\ref{th1+} with $T=\b=0$, we get the following result concerning approximation by $P_{n,0}^Q f$ in the space $A_{q, \mix}^\g(\T^d)$.

\begin{corollary}\label{cor2}
Under the conditions of Theorem~\ref{th1+}, for all $f\in A_{p,\mix}^{\a}(\T^d)$ and $n\in \N$, we have
  \begin{equation*}%\label{th2.1}
     \|f-P_{n,0}^Q f\|_{A_{q, \mix}^\g(\T^d)}\le C2^{-(\a-\g-\s_{p,q})n}n^{(d-1)\s_{p,q}}\|f\|_{A_{p,\mix}^{\a}(\T^d)},
  \end{equation*}
where the constant $C$ does not depend on $f$ and $n$.  In particular, if $p=q=2$, we have
  \begin{equation*}%\label{th2.1}
     \|f-P_{n,0}^Q f\|_{H_{\mix}^\g(\T^d)}\le C2^{-(\a-\g)n}\|f\|_{H_{\mix}^{\a}(\T^d)}.
  \end{equation*}
\end{corollary}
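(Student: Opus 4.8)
The plan is to obtain Corollary~\ref{cor2} as a direct specialization of Theorem~\ref{th1+}, taking $T=0$ and $\b=0$. The first step is the identification of the relevant spaces. Inspecting the definition of the hybrid Wiener space, the weight $\prod_{j=1}^d(1+|k_j|)^{p\a}(1+|\k|)^{p\b}$ reduces, when $\b=0$, to $\prod_{j=1}^d(1+|k_j|)^{p\a}$, which is precisely the weight defining $A_{p,\mix}^\a(\T^d)$. Hence $A_p^{\a,0}(\T^d)=A_{p,\mix}^\a(\T^d)$ with equal norms, so the assumption $f\in A_{p,\mix}^\a(\T^d)$ matches the hypothesis $f\in A_p^{\a,\b}(\T^d)$ of Theorem~\ref{th1+} at $\b=0$, and the factor $\|f\|_{A_p^{\a,\b}(\T^d)}$ on the right-hand side becomes $\|f\|_{A_{p,\mix}^\a(\T^d)}$.

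Next I would determine which branch of $\Omega_{\mix}(n)$ is active. The threshold appearing in Theorem~\ref{th1+} is $T\ge\frac{-\b}{\a-\g-\s_{p,q}}$; under the present assumptions the conditions of Theorem~\ref{th1+} with $\b=0$ read $\g>0$ and $\g+\s_{p,q}<\a$, which force $\a-\g-\s_{p,q}>0$, so the threshold equals $0$ and the choice $T=0$ satisfies $T\ge 0$. We therefore land in the upper branch. Substituting $T=0$ and $\b=0$ into that branch, the correction term $\big((\a-\g-\s_{p,q})T+\b\big)\frac{d-1}{d-T}$ vanishes identically, and $\Omega_{\mix}(n)$ collapses to $2^{-(\a-\g-\s_{p,q})n}n^{(d-1)\s_{p,q}}$, which is exactly the bound asserted in Corollary~\ref{cor2}.

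Finally, the special case $p=q=2$ follows by setting $\s_{2,2}=(\frac12-\frac12)_+=0$: the polynomial factor $n^{(d-1)\s_{p,q}}$ becomes $n^{0}=1$ and the exponent reduces to $\a-\g$. Combining this with Remark~\ref{remb}$(ii)$, which identifies $A_{2,\mix}^\g(\T^d)$ with $H_{\mix}^\g(\T^d)$ and $A_{2,\mix}^\a(\T^d)$ with $H_{\mix}^\a(\T^d)$, yields the stated estimate in $H_{\mix}^\g$. There is no genuine obstacle in this argument; the only point requiring a moment's care is the boundary value $T=0$, which must be checked to fall into the $T\ge\frac{-\b}{\a-\g-\s_{p,q}}$ branch rather than the strict $T<$ branch, as verified above.
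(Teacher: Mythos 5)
Your proposal is correct and matches the paper's own proof, which is exactly the one-line specialization ``applying Theorem~\ref{th1+} with $T=\b=0$''; your verification that $T=0$ falls in the branch $T\ge\frac{-\b}{\a-\g-\s_{p,q}}$, that the exponent collapses to $\a-\g-\s_{p,q}$, and that $A_p^{\a,0}(\T^d)=A_{p,\mix}^\a(\T^d)$ simply makes explicit what the paper leaves implicit.
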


{\bf Acknowledgements}. We would like to thank Kristina Oganesyan for useful remarks.

                         \end{document}